\newif\ifMAKEPICS
\DeclareExpandableDocumentCommand{\convertlen}{ O{cm} m }
{
	\dim_to_decimal_in_unit:nn { #2 } { 1 #1 } cm
}
\newtheorem{theorem}{Theorem}[section]
\newtheorem{proposition}{Proposition}[section]
\newtheorem{lemma}[theorem]{Lemma}
\newtheorem{remark}[theorem]{Remark}
\newtheorem{assumption}{Assumption}
\newcommand{\red}[1]{{\color{red}{#1}}}
\newcommand{\ignore}[1]{}
\begin{document}
	\title{Reliability and efficiency  of DWR-type  a posteriori error estimates with smart sensitivity weight recovering}	
	\author[1,2]{B. Endtmayer}
	\author[2]{U. Langer}
	\author[3,4]{T. Wick}
	
	\affil[1]{Doctoral Program on Computational Mathematics, Johannes Kepler University, Altenbergerstr. 69, A-4040 Linz, Austria}
	\affil[2]{Johann Radon Institute for Computational and Applied Mathematics, Austrian Academy of Sciences, Altenbergerstr. 69, A-4040 Linz, Austria}
	
	\affil[3]{Leibniz Universit\"at Hannover,
		Institut f\"ur Angewandte Mathematik,
		AG Wissenschaftliches Rechnen,
		Welfengarten 1, 30167 Hannover, Germany}
	\affil[4]{Cluster of Excellence PhoenixD (Photonics, Optics, and
		Engineering - Innovation Across Disciplines), Leibniz Universit\"at Hannover, Germany}
	
	\date{}
	
	\maketitle

	\begin{abstract}
		We derive efficient and reliable goal-oriented error estimations,
		and devise adaptive mesh procedures for the finite element method  
		that are based on the localization of a posteriori estimates.
		In our previous work [SIAM J. Sci. Comput., 42(1), A371--A394, 2020], 
		we showed efficiency and reliability for 
		error estimators based on enriched finite element spaces.
		However, the solution of problems on a enriched finite element space is 
		expensive.
		In the literature, it is well known that one can use some higher-order 
		interpolation to overcome this 
		bottleneck. 
		Using a saturation assumption,  
		we extend the proofs of efficiency and reliability to 
		such higher-order interpolations.
		The results can be used to create a new family of algorithms,
		where one of them is tested on three numerical examples 
		(Poisson problem, p-Laplace equation, Navier-Stokes benschmark),
		and is compared to our previous algorithm.
	\end{abstract}

	\section{Introduction}
	\label{Section: Introduction}
	
	Goal-oriented error estimation using adjoints 
	was established in \cite{BeckerRannacher1995,BeRa01},
	and is a current research topic as attested in many studies. Recent 
	developments include 
	Fractional-Step-$\theta$ Galerkin formulations \cite{MeiRi14}, 
	a partition-of-unity variational 
	localization \cite{RiWi15_dwr},
	phase-field fracture problems \cite{Wi16_dwr_pff},
	surrogate constructions for stochastic 
	inversion \cite{MATTIS201836}, general adaptive multiscale modeling 
	\cite{oden_2018}, model adaptivity in multiscale problems \cite{MaiRa18},
	multigoal-oriented error estimation with balancing discretization and iteration
	errors \cite{EnLaWi18},
	nonstationary, nonlinear fluid-structure interaction \cite{FaiWi18}, 
	realizations on polygonal meshes using boundary-element based finite elements \cite{WeiWi18}, 
	realizations using the finite cell method \cite{StRaSchroe19},
	error estimation for sea ice simulations \cite{MeRi2020},
	discretization error estimation in computer assisted surgery \cite{DuBoBuBuChLlLoLoRoTo2020},
	and a unifying framework with inexact solvers covering discontinuous Galerkin and finite volume approaches \cite{MaVohYou20}.
	An open-source framework for linear, time-dependent, goal-oriented 
	error estimation was published in \cite{KoeBruBau2019a}.
	Abstract convergence results of goal-oriented techniques were 
	studied in \cite{HolPo2016} and \cite{FeiPraeZee16}. A 
	worst-case multigoal-oriented 
	error estimation was carried out in \cite{BruZhuZwie16}.
	Recently, using a saturation assumption, e.g., 
	\cite{DoerflerNochetto2002,CaGaGed16,FeiPraeZee16,BankParsaniaSauter2013},
	two-side error estimates, namely efficiency and robustness
	of the adjoint-based error estimator,
	could be shown \cite{EnLaWi20}.

	In most realizations, an adjoint problem is used in order 
	to determine sensitivities that enter the error in a single 
	target functional \cite{BeRa01,RanVi2013} or multiple quantities 
	of interest \cite{Ha08,EnWi17,EnLaWi18}. In \cite{RanVi2013,EnLaWi18},
	balancing of discretization and nonlinear iteration errors 
	for single and multiple goal functionals was considered, respectively.
	However, for nonlinear problems, sensitivity weights in the primal 
	and adjoint variable are necessary. 
	These weights must be of higher-order. Otherwise they yield zero sensitivities,
	and, therefore, the entire error estimator vanishes.
	%
	The most straightforward way is to use 
	higher-order finite element spaces \cite{BeRa01}. 
	A computationally cheaper approach based on low-order finite elements
	and then a patch-wise
	higher-order interpolation was suggested 
	as well in the early stages \cite{BeRa01} and combined 
	in an elegant way to a weak localization in \cite{BraackErn02}. 
	Rigorous proofs of effectivity measured in mesh-dependent norms, 
	in which the true error and the 
	estimator satisfy a common upper bound, were established in \cite{RiWi15_dwr}.
	Therein, saturation assumptions where not required, but the results 
	of effectivity are weaker than in our recent work \cite{EnLaWi20}.
	More specifically, in \cite{EnLaWi20}, 
	the adjoint problem is solved in a higher-order space and then 
	interpolated into the low-order space for calculating the 
	interpolation error. For this procedure, two-side error estimates 
	are proven, while using the previously mentioned saturation assumption.

	The objective of this paper is now the other way around, namely
	efficiency proofs (under saturation assumptions)
	for interpolations from low-order finite element spaces
	into higher-order spaces, which are not present in the literature to date.
	We establish results 
	when both the primal and adjoint problems are computed 
	only in low-order finite element spaces. To this end, 
	two additional error terms will be introduced. The resulting 
	error estimator holds for nonlinear PDEs and nonlinear 
	goal functionals, and accounts for the discretization error,
	nonlinear iteration error and the interpolation error.
	These derivations 
	lead to
	a novel adaptive 
	algorithm that works at low computational cost in goal-oriented 
	frameworks. Our theoretical 
	and algorithmic developments are then substantiated 
	with the help of several numerical tests.
	It is known that the saturation assumption may be violated for 
	transport or convection-dominated problems. We provide 
	one numerical test in which this is the case. 
	Furthermore, linear and nonlinear problem configurations 
	are considered to cover most relevant classes of stationary,
	nonlinear settings.
	
	The outline of this paper is as follows: In Section 
	\ref{Section: The dual weighted residual method for nonlinear problems },
	an abstract setting for the DWR method is introduced. Then, 
	in Section 
	\ref{Section: Efficiency and reliability results for the DWR estimator in enriched spaces},
	the DWR approach with general approximations is stated.
	A discussion of all terms of the newly proposed error estimator 
	is provided in Section 
	\ref{Section: Localization and discussions on the error estimator parts}.
	Based on these findings, an adaptive algorithm is designed 
	in Section \ref{Section: Algorithms}. Several numerical 
	tests with linear PDEs and linear goal functionals, nonlinear 
	settings, and a stationary Navier-Stokes configuration  are carried out in  Section 
	\ref{Section: Numerical examples}. We summarize our results 
	in Section \ref{Section: Conclusions}.

	\section{An abstract setting and the dual weighted resiudal method} 	
	\label{Section: The dual weighted residual method for nonlinear problems }
	In this section, we briefly introduce the notation and the settings 
	that we consider in this work. These are similar to our 
	previous studies \cite{EnLaWi18,EnLaWi20}.

	\subsection{An abstract setting}
	Let $U$ and $V$ be reflexive Banach spaces, and let $\mathcal{A}: U \mapsto V^*$ be a nonlinear mapping, 
	where $V^*$ denotes the dual space of the Banach space $V$.
	We consider the problem: Find $u \in U$ such that
	\begin{equation} \label{Equation: Cont Primal Problem}
	\mathcal{A}(u)=0 \qquad \text{ in } V^*.
	\end{equation}
	This problem will be refereed as the \textit{primal problem}. As mapping we have in mind some (possibly) nonlinear partial differential operator. 
	Furthermore, we consider finite dimensional subspaces of $U_h \subset U$ and $V_h \subset V$.
	In this work, $U_h$ and $V_h$ are finite element spaces .
	This leads to the following finite dimensional problem: Find $u_h \in U_h$ such that
	\begin{equation} \label{Equation: Discret Primal Problem}
	\mathcal{A}(u_h)=0 \qquad \text{ in } V_h^*.
	\end{equation}
	We assume that the problem (\ref{Equation: Cont Primal Problem}) as well as the finite dimensional problem (\ref{Equation: Discret Primal Problem}) are solvable. Further conditions will be imposed when needed.
	However, we do not aim for the solution $u$. The goal is to obtain one characteristic quantity (quantity of interest) $J(u)\in \mathbb{R}$, i.e., a functional evaluation, evaluated at the solution $u \in U$ , where $J: U \mapsto \mathbb{R}$.

	\subsection{The dual weighted residual method}
	\label{PU-DWR-NONLINEAR-ONEFUNTIONAL}
	In this section, we briefly review the Dual Weighted Residual (DWR) method for nonlinear problems.
	This work was extended to balance the discretization and iteration errors in \cite{RanVi2013,RaWeWo10,MeiRaVih109}. 
	
	This paper forms together with our previous works \cite{EnLaWi18,EnLaWi20}
	the basis of the current
	study.  
	Since the DWR method is an adjoint based method, we consider 
	the  \textit{adjoint problem}: Find $z \in V$ such that 
	\begin{equation}\label{Equation : cont adjoint Problem}
	\left(\mathcal{A}'(u)\right)^* (z) = J'(u) \qquad \text{ in } U^*,
	\end{equation}
	where $ \mathcal{A}'(u)$ and $J'(u)$ are the Fr\'echet-derivatives of the nonlinear operator and functional, respectively, evaluated at the solution of the primal problem $u$.
	In the following sections, we require a  finite dimensional version of 
	(\ref{Equation : cont adjoint Problem}) that reads as follows:
	Find $z_h \in V_h$ such that 
	\begin{equation*}\label{Equation : discrete adjoint Problem}
	\left(\mathcal{A}'(u_h)\right)^* (z_h) = J'(u_h) \qquad \text{ in } U_h^*.
	\end{equation*}
	Similarly to  the findings in \cite{RanVi2013,BeRa01,RaWeWo10,EnLaWi20},  
	we obtain   an error representation in the following theorem: 
	\begin{theorem}\label{Theorem: Error Representation}
		Let us assume that $\mathcal{A} \in \mathcal{C}^3(U,V)$ and $J \in \mathcal{C}^3(U,\mathbb{R})$. 
		If $u$ solves (\ref{Equation: Cont Primal Problem}) 
		and $z$ solves (\ref{Equation : cont adjoint Problem}) for $u \in U$, 
		then 
		the error representation
		\begin{align} \label{Error Representation}
		\begin{split}
		J(u)-J(\tilde{u})&= \frac{1}{2}\rho(\tilde{u})(z-\tilde{z})+\frac{1}{2}\rho^*(\tilde{u},\tilde{z})(u-\tilde{u}) 
		-\rho (\tilde{u})(\tilde{z}) + \mathcal{R}^{(3)},\nonumber
		\end{split}
		\end{align}
		holds true for arbitrary fixed  $\tilde{u} \in U$ and $ \tilde{z} \in V$, where
		$\rho(\tilde{u})(\cdot) := -\mathcal{A}(\tilde{u})(\cdot)$,
		$\rho^*(\tilde{u},\tilde{z})(\cdot) := J'(u)-\mathcal{A}'(\tilde{u})(\cdot,\tilde{z})$,
		and the remainder term
		\begin{equation}
		\begin{split}	\label{Error Estimator: Remainderterm}
		\mathcal{R}^{(3)}:=\frac{1}{2}\int_{0}^{1}\big[J'''(\tilde{u}+se)(e,e,e)
		-\mathcal{A}'''(\tilde{u}+se)(e,e,e,\tilde{z}+se^*)
		-3\mathcal{A}''(\tilde{u}+se)(e,e,e)\big]s(s-1)\,ds,
		\end{split} 
		\end{equation}
		with $e=u-\tilde{u}$ and $e^* =z-\tilde{z}$.
	\end{theorem}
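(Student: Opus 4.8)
The plan is to reduce the identity to a single application of the trapezoidal quadrature rule with exact integral remainder, applied to the Lagrangian of the problem along the segment joining $\tilde x:=(\tilde u,\tilde z)$ to $x:=(u,z)$ in $U\times V$. Introduce
\[
\mathcal{L}\colon U\times V\to\mathbb{R},\qquad
\mathcal{L}(v,q):=J(v)+\rho(v)(q)=J(v)-\mathcal{A}(v)(q),
\]
which is of class $\mathcal{C}^3$ because $\mathcal{A}\in\mathcal{C}^3(U,V)$ and $J\in\mathcal{C}^3(U,\mathbb{R})$. Three elementary facts drive the argument. First, at the pair $x=(u,z)$ of exact solutions the Fr\'echet derivative $\mathcal{L}'(x)$ vanishes on all of $U\times V$: its $q$-derivative is $\partial_q\mathcal{L}(u,z)(\delta q)=\rho(u)(\delta q)=-\mathcal{A}(u)(\delta q)$, which is zero by \eqref{Equation: Cont Primal Problem}, and its $v$-derivative is $\partial_v\mathcal{L}(u,z)(\delta v)=J'(u)(\delta v)-\mathcal{A}'(u)(\delta v,z)$, which is zero by \eqref{Equation : cont adjoint Problem}. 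Second, since $\mathcal{A}(u)=0$ one has $\mathcal{L}(u,z)=J(u)$, whereas $\mathcal{L}(\tilde u,\tilde z)=J(\tilde u)+\rho(\tilde u)(\tilde z)$; hence $J(u)-J(\tilde u)$ differs from $\mathcal{L}(x)-\mathcal{L}(\tilde x)$ only by the term $\rho(\tilde u)(\tilde z)$. Third, for the fixed pair the partial derivatives are, by construction, $\partial_q\mathcal{L}(\tilde u,\tilde z)(\cdot)=\rho(\tilde u)(\cdot)$ and $\partial_v\mathcal{L}(\tilde u,\tilde z)(\cdot)=J'(\tilde u)(\cdot)-\mathcal{A}'(\tilde u)(\cdot,\tilde z)$, which we recognize as $\rho^*(\tilde u,\tilde z)(\cdot)$.

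It then remains to expand $\mathcal{L}(x)-\mathcal{L}(\tilde x)$. With $e=u-\tilde u$ and $e^*=z-\tilde z$ as in the statement, set $g(s):=\mathcal{L}(\tilde u+se,\tilde z+se^*)$ for $s\in[0,1]$; by the chain rule and the $\mathcal{C}^3$ assumption, $g\in\mathcal{C}^3([0,1])$ with $g(0)=\mathcal{L}(\tilde x)$, $g(1)=\mathcal{L}(x)$, $g'(1)=\mathcal{L}'(x)(e,e^*)=0$ by the stationarity above, and $g'(0)=\mathcal{L}'(\tilde x)(e,e^*)=\rho^*(\tilde u,\tilde z)(e)+\rho(\tilde u)(e^*)$ by the third fact. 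Two integrations by parts give the standard quadrature identity
\[
g(1)-g(0)=\tfrac12\bigl(g'(0)+g'(1)\bigr)+\tfrac12\int_0^1 g'''(s)\,s(s-1)\,ds,
\]
so that, using $g'(1)=0$, one obtains $\mathcal{L}(x)-\mathcal{L}(\tilde x)=\tfrac12\rho^*(\tilde u,\tilde z)(e)+\tfrac12\rho(\tilde u)(e^*)+\tfrac12\int_0^1 g'''(s)\,s(s-1)\,ds$.

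Finally I would compute $g'''(s)$ by the Leibniz rule, using that $q\mapsto\rho(v)(q)$ is affine in $q$: differentiating $J(\tilde u+se)$ three times gives $J'''(\tilde u+se)(e,e,e)$, while differentiating $-\mathcal{A}(\tilde u+se)(\tilde z+se^*)$ three times produces $-\mathcal{A}'''(\tilde u+se)(e,e,e,\tilde z+se^*)$ together with the cross term $-3\,\mathcal{A}''(\tilde u+se)(e,e,e^*)$, the factor $3$ arising as $1+2$ from the two ways the single differentiation of the affine factor $\tilde z+se^*$ combines with the two remaining differentiations hitting $\mathcal{A}$. Comparing with \eqref{Error Estimator: Remainderterm} identifies $\tfrac12\int_0^1 g'''(s)\,s(s-1)\,ds$ with $\mathcal{R}^{(3)}$, and inserting the relation between $J(u)-J(\tilde u)$ and $\mathcal{L}(x)-\mathcal{L}(\tilde x)$ noted above yields the asserted error representation. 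The only point requiring real care is this last expansion: keeping track of which Fr\'echet derivative of $\mathcal{A}$ is hit by each differentiation along the segment and getting the combinatorial factor right; the remainder of the argument is the scalar trapezoidal identity together with the elementary algebra of $\mathcal{L}$, and no saturation-type hypothesis or functional-analytic subtlety beyond $\mathcal{C}^3$-regularity enters.
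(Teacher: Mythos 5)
Your proof is correct and takes essentially the same route as the paper: the paper defers this theorem to \cite{EnLaWi18}, but the very same Lagrangian-plus-trapezoidal-rule argument (stationarity of $\mathcal{L}$ at $(u,z)$, the exact-remainder trapezoidal identity, and the Leibniz expansion of the third derivative along the segment) is displayed in the paper's proof of Theorem~\ref{Corrollary: discrete Error Representation}. Note only that your bookkeeping yields $+\rho(\tilde u)(\tilde z)$ and the cross term $-3\mathcal{A}''(\tilde u+se)(e,e,e^*)$, which is the identity the paper actually uses later (definition of $\eta$ and the proof of Lemma~\ref{Lemma: Difference Error Estimators}); the minus sign in front of $\rho(\tilde u)(\tilde z)$ and the argument $(e,e,e)$ in the printed statement of Theorem~\ref{Theorem: Error Representation} are typos, so your derivation is the corrected version rather than a discrepancy.
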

	\begin{proof}
		The proof is given in \cite{EnLaWi18}.
	\end{proof}
	
	For the arbitrary elements $\tilde{u}$ and $\tilde{z}$, we think of approximations to the discrete solutions $u_h$ and $z_h$.
	The resulting error estimator reads as 
	\begin{equation*} \label{FullErrorEstimator}
	\eta=\frac{1}{2}\rho(\tilde{u})(z-\tilde{z})+\frac{1}{2}\rho^*(\tilde{u},\tilde{z})(u-\tilde{u}) 
	+\rho (\tilde{u})(\tilde{z}) + \mathcal{R}^{(3)}.
	\end{equation*} 
	This error estimator is exact, but it still depends on the unknown solutions $u$ and $z$.  
	Therefore, it is not computable.		
	To obtain a computable error estimator,  one can replace the exact solutions  $u$ and $z$
	by an approximate solution  on enriched finite dimensional spaces $U_h^{(2)}$ and
	$V_h^{(2)}$.  
	This was already discussed in 
	\cite{BeRa01,BaRa03}.
	Some efficiency and reliability results for this replacement are discussed in \cite{EnLaWi20}.  Other replacements will be covered by the theory in this work. This includes (patch-wise) reconstructions as suggested in \cite{BeRa01,BaRa03,BraackErn02} .

	
	\section{DWR error estimation using general approximations}
	\label{Section: Efficiency and reliability results for the DWR estimator in enriched spaces}
	In this key section, the DWR estimator is augmented 
	to deal with general approximations such as interpolations.
	The latter allow for a very cost-efficient realization
	of the DWR estimator for both linear and nonlinear problems.
	These improvements are significant. 
	However, it turns out 
	that the governing proofs have the same structure as 
	in Section 3 in  our previous work \cite{EnLaWi20}. Therein,
	it was assumed that the solutions on  enriched spaces are known.

	In the current work, we consider that we just have some arbitrary approximations in the enriched spaces.  
	Examples how this is done are by inaccurate  or accurate solves on the enriched space or (patch-wise) higher-order interpolation operators \cite{BeRa01,BaRa03}.
	We show efficiency and reliability for an alternate form of the error estimator using some saturation assumption for the goal functional  on two different kind of approximations. 
	In other words, this means that the approximations in the enriched spaces deliver a more accurate result in the quantity of interest than the approximation of $u_h$.

	\subsection{Results on discrete spaces}	

	Let $\tilde{u}_h^{(2)} \in U_h^{(2)}$ be some arbitrary but fixed approximation of the primal problem: Find ${u}_h^{(2)} \in U_h^{(2)}$ such that 
	$\mathcal{A}({u}_h^{(2)})=0$ in $(V_h^{(2)})^* $, and $\tilde{z}_h^{(2)} \in V_h^{(2)}$ be an approximation to the discretized adjoint problem : Find ${z}_h^{(2)} \in V_h^{(2)}$
	$(\mathcal{A'}({u}_h^{(2)}))^*({z}_h^{(2)}) =J'({u}_h^{(2)})$ in $(U_h^{(2)})^*.$
	%
	\begin{theorem}\label{Corrollary: discrete Error Representation}
		Let us assume that $\mathcal{A} \in \mathcal{C}^3(U,V)$ and $J \in \mathcal{C}^3(U,\mathbb{R})$, and let   $\tilde{u} \in U$ and $ \tilde{z} \in V$ be arbitrary but fixed. 
		If $\tilde{u}_h^{(2)} \in U_h^{(2)}$ and $\tilde{z}_h^{(2)} \in V_h^{(2)}$ are some approximations  of  $u$ and $z$,
		then 
		the error representation
		\begin{align*} 
		\begin{split}
		J(\tilde{u}_h^{(2)})-J(\tilde{u})&= \frac{1}{2}\rho(\tilde{u})(\tilde{z}_h^{(2)}-\tilde{z})+\frac{1}{2}\rho^*(\tilde{u},\tilde{z})(\tilde{u}_h^{(2)}-\tilde{u}) 
		-\rho (\tilde{u})(\tilde{z}) \\ &-\rho(\tilde{u}_h^{(2)})(\frac{\tilde{z}_h^{(2)}+ \tilde{z}}{2})+\frac{1}{2}\rho^*(\tilde{u}_h^{(2)},\tilde{z}_h^{(2)})(\tilde{u}_h^{(2)}-\tilde{u})+ \tilde{\mathcal{R}}^{(3)(2)}
		\end{split}
		\end{align*}
		holds.
		In this error representation, the new terms in comparison to \cite{EnLaWi20} 
		are 
		\[
		\rho(\tilde{u}_h^{(2)})(\frac{\tilde{z}_h^{(2)} +\tilde{z}}{2})
		\quad\text{and}\quad
		\frac{1}{2}\rho^*(\tilde{u}_h^{(2)},\tilde{z}_h^{(2)})(\tilde{u}_h^{(2)}-\tilde{u}).
		\]
		Furthermore, we have 
		$\rho(\tilde{u})(\cdot) := -\mathcal{A}(\tilde{u})(\cdot)$
		and
		$\rho^*(\tilde{u},\tilde{z})(\cdot) := J'(u)-\mathcal{A}'(\tilde{u})(\cdot,\tilde{z})$
		as usual.
		%
		Finally, the remainder term is given by
		\begin{equation*}
		\begin{split}	\label{Error Estimator: Remainderterm1}
		\tilde{\mathcal{R}}^{(3)(2)}:=\frac{1}{2}\int_{0}^{1}[J'''(\tilde{u}+se)(e,e,e)
		-\mathcal{A}'''(\tilde{u}+se)(e,e,e,\tilde{z}+se^*)
		-3\mathcal{A}''(\tilde{u}+se)(e,e,e)]s(s-1)\,ds,
		\end{split} 
		\end{equation*}
		with $e=\tilde{u}_h^{(2)}-\tilde{u}$ and $e^* =\tilde{z}_h^{(2)}-\tilde{z}$.

		\begin{proof}
			The proof is an extension of \cite{RanVi2013}.
			First we define a general approximation $x := (\tilde{u}_h^{(2)},\tilde{z}_h^{(2)}) \in  X:=U \times V$ and $\tilde{x}:=(\tilde{u},\tilde{z}) \in X$.
			Assuming
			that $\mathcal{A} \in \mathcal{C}^3(U,V)$ and $J \in
			\mathcal{C}^3(U,\mathbb{R})$, we know that the Lagrange functional,  which is given by
			\begin{equation*}
			\mathcal{L}(\hat{x}):= J(\hat{u})-\mathcal{A}(\hat{u})(\hat{z}) \quad \forall (\hat{u},\hat{z})=:\hat{x} \in X,
			\end{equation*}
			belongs to
			$\mathcal{C}^3(X,\mathbb{R})$. This allows us to derive the following identity
			\begin{equation*}
			\mathcal{L}(x)-\mathcal{L}(\tilde{x})=\int_{0}^{1} \mathcal{L}'(\tilde{x}+s(x-\tilde{x}))(x-\tilde{x})\,ds.
			\end{equation*}
			Using  the trapezoidal rule 
			\begin{equation*}
			\int_{0}^{1}f(s)\,ds =\frac{1}{2}(f(0)+f(1))+ \frac{1}{2} \int_{0}^{1}f''(s)s(s-1)\,ds,
			\end{equation*}
			with $f(s):= \mathcal{L}'(\tilde{x}+s(x-\tilde{x}))(x-\tilde{x})$, cf. \cite{RanVi2013}, we obtain
			\begin{align*}
			\mathcal{L}(x)-\mathcal{L}(\tilde{x}) =& \frac{1}{2}(\mathcal{L}'(x)(x-\tilde{x}) +\mathcal{L}'(\tilde{x})(x-\tilde{x})) + \mathcal{R}^{(3)}.
			\end{align*}
			From the definition of $\mathcal{L}$,  we observe that
			\begin{align*}
			J(\tilde{u}_h^{(2)})-J(\tilde{u})=&\mathcal{L}(x)-\mathcal{L}(\tilde{x}) +{A(\tilde{u}_h^{(2)})(\tilde{z}_h^{(2)}) }- A(\tilde{u})(\tilde{z}) \\=& \frac{1}{2}(\mathcal{L}'(x)(x-\tilde{x}) +\mathcal{L}'(\tilde{x})(x-\tilde{x})) +A(\tilde{u}_h^{(2)})(\tilde{z}_h^{(2)}) -A(\tilde{u})(\tilde{z})+ \tilde{\mathcal{R}}^{(3)(2)}.
			\end{align*}
			It holds
			\begin{align*}
			\mathcal{L}'(x)(x-\tilde{x}) +\mathcal{L}'(\tilde{x})(x-\tilde{x}) = & \underbrace{J'(\tilde{u}_h^{(2)})(e)-\mathcal{A}'(\tilde{u}_h^{(2)})(e,\tilde{z}_h^{(2)})}_{=\rho^*(\tilde{u}_h^{(2)},\tilde{z}_h^{(2)})(\tilde{u}_h^{(2)}-\tilde{u})}-{A(\tilde{u}_h^{(2)})(e^*)}\\+ &\underbrace{J'(\tilde{u})(e)-\mathcal{A}'(\tilde{u})(e,\tilde{z})}_{=\rho^*(\tilde{u},\tilde{z})(\tilde{u}_h^{(2)}-\tilde{u})}-\underbrace{A(\tilde{u})(e^*)}_{=\rho(\tilde{u})(\tilde{z}_h^{(2)}-\tilde{z})}.
			\end{align*}
			Further manipulations and rewriting, together with 
			\[
			-\frac{1}{2}A(\tilde{u}_h^{(2)})(e^*) + {A(\tilde{u}_h^{(2)})(\tilde{z}_h^{(2)})}=
			A(\tilde{u}_h^{(2)})(-\frac{1}{2}\tilde{z}_h^{(2)} +\frac{1}{2}\tilde{z}+\tilde{z}_h^{(2)}) ) 
			= {A(\tilde{u}_h^{(2)})(\frac{\tilde{z}_h^{(2)} +\tilde{z}}{2}) },
			\] 
			yield
			\begin{align*}
			J(\tilde{u}_h^{(2)})-J(\tilde{u})=\frac{1}{2}\big(\rho^*(\tilde{u}_h^{(2)},\tilde{z}_h^{(2)})(\tilde{u}_h^{(2)}-\tilde{u})
			-A(\tilde{u}_h^{(2)})(e^*)
			+\rho(\tilde{u})(\tilde{z}_h^{(2)}-\tilde{z})+\rho^*(\tilde{u},\tilde{z})(\tilde{u}_h^{(2)}- \tilde{u})\big)\\%
			+{A(\tilde{u}_h^{(2)})(\tilde{z}_h^{(2)}) }) -A(\tilde{u})(\tilde{z})+ \tilde{\mathcal{R}}^{(3)(2)}\\
			=\frac{1}{2}\big({{\rho^*(\tilde{u}_h^{(2)},\tilde{z}_h^{(2)})(\tilde{u}_h^{(2)}-\tilde{u})}+\rho(\tilde{u})(\tilde{z}_h^{(2)}-\tilde{z})+\rho^*(\tilde{u},\tilde{z})(\tilde{u}_h^{(2)}-\tilde{u})}\big) \\
			+\underbrace{{A(\tilde{u}_h^{(2)})(\frac{\tilde{z}_h^{(2)} +\tilde{z}}{2}) })}_{=-\rho(\tilde{u}_h^{(2)})(\frac{\tilde{z}_h^{(2)}+ \tilde{z}}{2})} +A(\tilde{u})(\tilde{z})+ \tilde{\mathcal{R}}^{(3)(2)}.
			\end{align*}
			These last statements 
			prove
			the assertion.
		\end{proof}
		
	\end{theorem}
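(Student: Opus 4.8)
The plan is to mimic the Lagrange-functional argument from Theorem \ref{Theorem: Error Representation} but to keep the perturbation point at $\tilde{x}=(\tilde{u},\tilde{z})$ while replacing the exact pair $(u,z)$ by the approximate pair $x=(\tilde{u}_h^{(2)},\tilde{z}_h^{(2)})$. First I would set $\mathcal{L}(\hat{x}):=J(\hat{u})-\mathcal{A}(\hat{u})(\hat{z})$ and record that $\mathcal{L}\in\mathcal{C}^3(X,\mathbb{R})$ under the smoothness hypotheses; then I would write $\mathcal{L}(x)-\mathcal{L}(\tilde{x})=\int_0^1\mathcal{L}'(\tilde{x}+s(x-\tilde{x}))(x-\tilde{x})\,ds$ and apply the trapezoidal identity with remainder exactly as quoted, obtaining
\[
\mathcal{L}(x)-\mathcal{L}(\tilde{x})=\tfrac12\big(\mathcal{L}'(x)(x-\tilde{x})+\mathcal{L}'(\tilde{x})(x-\tilde{x})\big)+\tilde{\mathcal{R}}^{(3)(2)},
\]
where the remainder is literally the stated integral with $e=\tilde{u}_h^{(2)}-\tilde{u}$, $e^*=\tilde{z}_h^{(2)}-\tilde{z}$ (no new estimate is needed here — it is the same computation as in Theorem \ref{Theorem: Error Representation}).

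Next I would convert the Lagrangian difference into a functional difference. Since $\mathcal{L}(\hat x)=J(\hat u)-\mathcal{A}(\hat u)(\hat z)$, we get $J(\tilde{u}_h^{(2)})-J(\tilde{u})=\mathcal{L}(x)-\mathcal{L}(\tilde{x})+\mathcal{A}(\tilde{u}_h^{(2)})(\tilde{z}_h^{(2)})-\mathcal{A}(\tilde{u})(\tilde{z})$, i.e.\ I need to \emph{add back} the two bilinear boundary terms that distinguish $\mathcal{L}$ from $J$. Then I would expand $\mathcal{L}'$ at both endpoints: using $\mathcal{L}'(\hat x)(\delta u,\delta z)=J'(\hat u)(\delta u)-\mathcal{A}'(\hat u)(\delta u,\hat z)-\mathcal{A}(\hat u)(\delta z)$ and evaluating at $(x-\tilde x)=(e,e^*)$ I obtain
\[
\mathcal{L}'(x)(e,e^*)+\mathcal{L}'(\tilde{x})(e,e^*)
= \rho^*(\tilde{u}_h^{(2)},\tilde{z}_h^{(2)})(e)-\mathcal{A}(\tilde{u}_h^{(2)})(e^*)
+ \rho^*(\tilde{u},\tilde{z})(e)+\rho(\tilde{u})(e^*),
\]
where I use $-\mathcal{A}(\tilde u)(e^*)=\rho(\tilde u)(\tilde z_h^{(2)}-\tilde z)$ and, crucially, the identification $J'(\tilde u_h^{(2)})(e)-\mathcal{A}'(\tilde u_h^{(2)})(e,\tilde z_h^{(2)})=\rho^*(\tilde u_h^{(2)},\tilde z_h^{(2)})(\tilde u_h^{(2)}-\tilde u)$, which rests on the definition $\rho^*(\cdot,\cdot)(\cdot):=J'(u)-\mathcal{A}'(\cdot)(\cdot,\cdot)$ applied in the direction $e$.

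Finally I would do the algebraic bookkeeping on the $\mathcal{A}$-terms. Combining the factor $\tfrac12$ in front of $\mathcal{L}'$-terms with the full term $\mathcal{A}(\tilde{u}_h^{(2)})(\tilde{z}_h^{(2)})$ that was added back gives
\[
-\tfrac12\mathcal{A}(\tilde{u}_h^{(2)})(e^*)+\mathcal{A}(\tilde{u}_h^{(2)})(\tilde{z}_h^{(2)})
=\mathcal{A}(\tilde{u}_h^{(2)})\!\left(-\tfrac12\tilde{z}_h^{(2)}+\tfrac12\tilde{z}+\tilde{z}_h^{(2)}\right)
=\mathcal{A}(\tilde{u}_h^{(2)})\!\left(\tfrac{\tilde{z}_h^{(2)}+\tilde{z}}{2}\right)
=-\rho(\tilde{u}_h^{(2)})\!\left(\tfrac{\tilde{z}_h^{(2)}+\tilde{z}}{2}\right),
\]
and the leftover $-\mathcal{A}(\tilde u)(\tilde z)=\rho(\tilde u)(\tilde z)$ supplies the $-\rho(\tilde u)(\tilde z)$ term. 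Collecting everything yields exactly the claimed representation. The only real care needed — and the step I expect to be the mild obstacle — is the sign-and-direction bookkeeping in the two "new" terms: one must check that the Gateaux direction inside $\rho^*(\tilde u_h^{(2)},\tilde z_h^{(2)})(\cdot)$ is $e=\tilde u_h^{(2)}-\tilde u$ (not the exact primal error), and that splitting the single $-\tfrac12\mathcal{A}(\tilde u_h^{(2)})(e^*)$ against the added-back $\mathcal{A}(\tilde u_h^{(2)})(\tilde z_h^{(2)})$ produces the symmetric average weight $\tfrac12(\tilde z_h^{(2)}+\tilde z)$; everything else is a verbatim replay of the proof of Theorem \ref{Theorem: Error Representation} with $(u,z)$ replaced by $(\tilde u_h^{(2)},\tilde z_h^{(2)})$.
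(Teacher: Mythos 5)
Your proposal reproduces the paper's own proof essentially verbatim: the same Lagrangian $\mathcal{L}(\hat x)=J(\hat u)-\mathcal{A}(\hat u)(\hat z)$, the same trapezoidal-rule identity producing $\tilde{\mathcal{R}}^{(3)(2)}$, the same endpoint expansion of $\mathcal{L}'$ in the direction $(e,e^*)$, and the same recombination $-\tfrac12\mathcal{A}(\tilde u_h^{(2)})(e^*)+\mathcal{A}(\tilde u_h^{(2)})(\tilde z_h^{(2)})=\mathcal{A}(\tilde u_h^{(2)})\bigl(\tfrac{\tilde z_h^{(2)}+\tilde z}{2}\bigr)$, so it is correct and takes the same route. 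The only blemish is the sign of the iteration-error term: your (correct) leftover $-\mathcal{A}(\tilde u)(\tilde z)$ equals $+\rho(\tilde u)(\tilde z)$ rather than the $-\rho(\tilde u)(\tilde z)$ written in the theorem, but this is a slip the paper itself makes (its final display silently turns $-\mathcal{A}(\tilde u)(\tilde z)$ into $+\mathcal{A}(\tilde u)(\tilde z)$, while the estimator $\eta$ stated after Theorem~\ref{Theorem: Error Representation} carries the sign your derivation gives), so it reflects the source's notation rather than a gap in your argument.
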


	{\begin{remark}
			If we use the solutions of the adjoint and the primal problem on the enriched spaces 
			for computing  $\tilde{z}_h^{(2)}$ and $\tilde{u}_h^{(2)}$, respectively, 
			then 
			\[
			\rho(\tilde{u}_h^{(2)})(\frac{\tilde{z}_h^{(2)}+ \tilde{z}}{2}) =0
			\quad \mbox{ and }  \quad
			\frac{1}{2}\rho^*(\tilde{u}_h^{(2)},\tilde{z}_h^{(2)})(\tilde{u}_h^{(2)}-\tilde{u})=0.
			\]
		\end{remark}}
		
		{\begin{remark}
				A practial choice for $\tilde{z}_h^{(2)}$ and $\tilde{u}_h^{(2)}$ is, that we can construct 
				higher-order interpolations from $\tilde{z}_h$ and $\tilde{u}_h$.
				This allows to compute all subproblems with low-order finite elements
				and the error estimator can be constructed in a cheap way. 
				Under a saturation assumption we  prove some results for this well known interpolation techniques.
			\end{remark}}
			
			Theorem~\ref{Corrollary: discrete Error Representation} motivates the following choice of the error estimator.
			\begin{equation} \label{Definition: Error Estimator}
			\tilde{\eta}^{(2)}:=\frac{1}{2}\rho(\tilde{u})(\tilde{z}_h^{(2)}-\tilde{z})+\frac{1}{2}\rho^*(\tilde{u},\tilde{z})(\tilde{u}_h^{(2)}-\tilde{u}) 
			-\rho (\tilde{u})(\tilde{z})-\rho(\tilde{u}_h^{(2)})(\frac{\tilde{z}_h^{(2)}+ \tilde{z}}{2})+\\ \frac{1}{2}\rho^*(\tilde{u}_h^{(2)},\tilde{z}_h^{(2)})(\tilde{u}_h^{(2)}-\tilde{u})+ \tilde{\mathcal{R}}^{(3)(2)}.
			\end{equation}

			\subsection{Two-sided estimates for DWR using a saturation assumption}
			
			In the upcoming lemma, we derive two-sided bounds (efficiency and 
			reliablity) for the error estimator 
			$\tilde{\eta}^{(2)}$ defined by (\ref{Definition: Error Estimator}). 
			
			\begin{lemma}\label{Lemma:  Error Estimatorboundsremainder}
				If the assumptions of Theorem \ref{Theorem: Error Representation} are fulfilled, then
				the  computable error estimator $\tilde{\eta}^{(2)}$ 
				can be bounded from below and above as follows:
				\begin{equation}
				| J(u)-J(\tilde{u}) |-|J(u)-J(\tilde{u}_h^{(2)})| \leq |\tilde{\eta}^{(2)}| \leq | J(u)-J(\tilde{u}) |+ |J(u)-J(\tilde{u}_h^{(2)})|. \nonumber
				\end{equation}

				\begin{proof}
					We proof the bounds in the same way as in \cite{EnLaWi20}.
					We know that 
					$	|\eta| = |\tilde{\eta}^{(2)} - (\tilde{\eta }^{(2)} - \eta)| $,
					and therefore, we can conclude that 
					\begin{align*}
					|\eta| - |\eta - \tilde{\eta}^{(2)}| \leq |\tilde{\eta}^{(2)}| \leq 	|\eta|+|\eta - \tilde{\eta}^{(2)}|.
					\end{align*}
					The statement of the lemma follows with the identities,  $\eta-\tilde{\eta}^{(2)}=J(u)-J(\tilde{u})-J(\tilde{u}_h^{(2)})+J(\tilde{u}) = J(u)-J(\tilde{u}_h^{(2)})$ and 
					$\eta= J(u)-J(\tilde{u})$.
				\end{proof}
			\end{lemma}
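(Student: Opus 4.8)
The plan is to reduce everything to the exact error representations already available and then apply a one-line algebraic manipulation with the triangle inequality. The key observation is that the exact (but non-computable) estimator $\eta$ from Theorem~\ref{Theorem: Error Representation} satisfies $\eta = J(u)-J(\tilde{u})$ identically, since the remainder term in $\eta$ is exactly $\mathcal{R}^{(3)}$ with $e = u-\tilde{u}$, $e^*=z-\tilde{z}$, so the right-hand side of the estimator definition collapses back to $J(u)-J(\tilde u)$. I would state this as the first identity.

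Second, I would compute the difference $\eta - \tilde{\eta}^{(2)}$ by comparing term by term the definition of $\eta$ (with the true $u,z$) against the definition of $\tilde{\eta}^{(2)}$ in (\ref{Definition: Error Estimator}). Using Theorem~\ref{Corrollary: discrete Error Representation}, the value of $\tilde{\eta}^{(2)}$ equals $J(\tilde{u}_h^{(2)})-J(\tilde{u})$ — this is precisely the content of that theorem, since $\tilde{\eta}^{(2)}$ is defined to be exactly the right-hand side of the error representation there. Hence $\eta - \tilde{\eta}^{(2)} = \big(J(u)-J(\tilde{u})\big) - \big(J(\tilde{u}_h^{(2)})-J(\tilde{u})\big) = J(u)-J(\tilde{u}_h^{(2)})$. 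These are the two identities quoted in the proof sketch, and they are where all the real work (done in the two preceding theorems) is hidden.

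Third, with those two identities in hand the estimate is purely formal: write $\eta = \tilde{\eta}^{(2)} + (\eta - \tilde{\eta}^{(2)})$, apply the triangle inequality in both directions to get
\[
|\eta| - |\eta - \tilde{\eta}^{(2)}| \;\le\; |\tilde{\eta}^{(2)}| \;\le\; |\eta| + |\eta - \tilde{\eta}^{(2)}|,
\]
and then substitute $|\eta| = |J(u)-J(\tilde{u})|$ and $|\eta - \tilde{\eta}^{(2)}| = |J(u)-J(\tilde{u}_h^{(2)})|$ to obtain exactly the claimed two-sided bound.

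I do not expect a genuine obstacle here; the lemma is essentially a corollary of the two error representations. The only point requiring a little care is making sure the remainder terms match up: the $\mathcal{R}^{(3)}$ appearing in $\eta$ (built from $e=u-\tilde u$, $e^*=z-\tilde z$) and the $\tilde{\mathcal{R}}^{(3)(2)}$ appearing in $\tilde\eta^{(2)}$ (built from $e=\tilde u_h^{(2)}-\tilde u$, $e^*=\tilde z_h^{(2)}-\tilde z$) are \emph{different} quantities, but that is fine — we never need to cancel them against each other, because each estimator is evaluated via its own exact error representation before we take the difference. So the honest statement of the argument is: invoke Theorem~\ref{Theorem: Error Representation} to get $\eta = J(u)-J(\tilde u)$; invoke Theorem~\ref{Corrollary: discrete Error Representation} to get $\tilde\eta^{(2)} = J(\tilde u_h^{(2)})-J(\tilde u)$; subtract; apply the triangle inequality.
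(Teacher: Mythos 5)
Your proposal is correct and follows essentially the same route as the paper: identify $\eta = J(u)-J(\tilde u)$ via Theorem~\ref{Theorem: Error Representation}, identify $\eta-\tilde\eta^{(2)} = J(u)-J(\tilde u_h^{(2)})$ via Theorem~\ref{Corrollary: discrete Error Representation}, and conclude with the triangle inequality applied to $\eta = \tilde\eta^{(2)} + (\eta-\tilde\eta^{(2)})$. Your added remark that the two remainder terms $\mathcal{R}^{(3)}$ and $\tilde{\mathcal{R}}^{(3)(2)}$ never need to be cancelled against each other is accurate and consistent with the paper's argument.
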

			
			In the sequel, we impose a saturation assumption, which
			is a common assumption in hierarchical based error estimation \cite{BankWeiser1985,BoErKor1996,BankSmith1993,Verfuerth:1996a}. 
			Even if the solutions of the primal and adjoint problem in the enriched spaces are used,
			the saturation assumption may be violated as shown in \cite{BoErKor1996,EnLaWi20}. 
			
			However, for particular problems, quantities of interest and refinements, 
			it is possible to show the saturation assumption 
			\cite{DoerflerNochetto2002,Agouzal2002,AchAchAgou2004,FerrazOrtnerPraetorius2010,BankParsaniaSauter2013,CaGaGed16,ErathGanterPraetorius2018,Rossi2002}.  
			It heavily depends on  the quantity of interest, the finite dimensional spaces and the problem. 
			We impose the following assumption, 
			which is a slight generalization to \cite{EnLaWi20}.
			
			\begin{assumption}[Saturation assumption for the goal functional] \label{Assumption: Better approximation}
				Let $\tilde{u}_h^{(2)}$ be an arbitrary, but fixed approximation  in $U_h^{(2)}$, and let $\tilde{u}$ be some approximation in $U_h$. 
				Then we assume that 
				\[
				|J(u)-J(\tilde{u}_h^{(2)})| < b_h| J(u)-J(\tilde{u}) |
				\]
				for some  $b_h<b_0$ and some fixed $b_0 \in (0,1)$.
			\end{assumption}

			\begin{remark}
				For gradient based functionals like the flux, 
				one can use recovering techniques to reconstruct 
				the gradient as in \cite{KoPeRe03}. Here, under certain conditions, the saturation assumption can be shown.
			\end{remark}
			%
			\begin{remark} \label{Remark: pointevaluation}
				If $J(u)$ is just a point evaluation and the given point is a node in the mesh, then
				$|J(u)-J(\tilde{u}_h^{(2)})| =| J(u)-J(\tilde{u}) |$  
				in the case of higher-order interpolation as used in  \cite{BeRa01,RanVi2013,BaRa03}.
				Therefore, the saturation assumption is never fulfilled.
				If the given point is not on the grid, then 
				$|J(u)-J(\tilde{u}_h^{(2)})|$ converges to $| J(u)-J(\tilde{u}) |$ 
				provided that the mesh is locally refine around the evaluation point.
			\end{remark}
			\begin{theorem} \label{Theorem: Efficiency and Reliability with remainder}
				Let the Assumption~\ref{Assumption: Better approximation} be fulfilled. 
				Then the  computable error estimator $\tilde{\eta}^{(2)}$ is efficient and reliable, i.e. 
				\begin{equation*}
				\label {Estimate: hEffektivity+Remainder}
				\underline{c}_h|\tilde{\eta}^{(2)}| \leq | J(u)-J(\tilde{u}) | \leq 	\overline{c}_h|\tilde{\eta}^{(2)}|
				\quad \mbox{and} \quad
				\underline{c}|\tilde{\eta}^{(2)}| \leq | J(u)-J(\tilde{u}) | \leq 	\overline{c}|\tilde{\eta}^{(2)}|, 
				\end{equation*}
				where  $\underline{c}_h:= 1/(1+b_h)$, $\overline{c}_h:=1/( 1-b_h)$, $\underline{c}:= 1/(1+b_0)$,
				and $\overline{c}:=1/( 1-b_0)$. 
				%
				\begin{proof}
					The proof follows 
					from
					the estimates given in the proof of Lemma~\ref{Lemma:  Error Estimatorboundsremainder} and simple computations.  
					We follow  the same steps as in the proof of Theorem 3.3 in \cite{EnLaWi20}.
				\end{proof}
			\end{theorem}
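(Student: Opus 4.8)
The plan is to reduce the statement to the two-sided bound already proved in Lemma~\ref{Lemma:  Error Estimatorboundsremainder} and then insert the saturation Assumption~\ref{Assumption: Better approximation}. I would first introduce the shorthand $E := |J(u)-J(\tilde{u})|$ and $E^{(2)} := |J(u)-J(\tilde{u}_h^{(2)})|$, so that Lemma~\ref{Lemma:  Error Estimatorboundsremainder} reads $E - E^{(2)} \le |\tilde{\eta}^{(2)}| \le E + E^{(2)}$, while Assumption~\ref{Assumption: Better approximation} gives $E^{(2)} < b_h E$ with $b_h < b_0 \in (0,1)$. In particular $1-b_h > 1-b_0 > 0$, so every constant appearing below is well defined and strictly positive; note also that the assumption is only meaningful when $E>0$ (otherwise $E^{(2)}<0$ contradicts $E^{(2)}\ge 0$), so the degenerate case $J(u)=J(\tilde u)$ need not be treated separately.

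Next I would derive the $h$-dependent chain. For the lower bound on $E$, combine the upper half of Lemma~\ref{Lemma:  Error Estimatorboundsremainder} with $E^{(2)} < b_h E$ to get
\[
|\tilde{\eta}^{(2)}| \le E + E^{(2)} < (1+b_h)\,E ,
\]
hence $E > |\tilde{\eta}^{(2)}|/(1+b_h) = \underline{c}_h\,|\tilde{\eta}^{(2)}|$. For the upper bound on $E$, combine the lower half of Lemma~\ref{Lemma:  Error Estimatorboundsremainder} with $E^{(2)} < b_h E$:
\[
(1-b_h)\,E = E - b_h E < E - E^{(2)} \le |\tilde{\eta}^{(2)}| ,
\]
and, dividing by $1-b_h>0$, $E < |\tilde{\eta}^{(2)}|/(1-b_h) = \overline{c}_h\,|\tilde{\eta}^{(2)}|$. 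Together these give $\underline{c}_h|\tilde{\eta}^{(2)}| \le |J(u)-J(\tilde{u})| \le \overline{c}_h|\tilde{\eta}^{(2)}|$ (in fact with strict inequalities).

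Finally I would pass to the $h$-independent constants by monotonicity in $b$: since $0 < b_h < b_0 < 1$ we have $1+b_h < 1+b_0$ and $0 < 1-b_0 < 1-b_h$, so $\underline{c} = 1/(1+b_0) \le 1/(1+b_h) = \underline{c}_h$ and $\overline{c}_h = 1/(1-b_h) \le 1/(1-b_0) = \overline{c}$. Chaining these with the estimates just obtained yields $\underline{c}|\tilde{\eta}^{(2)}| \le |J(u)-J(\tilde{u})| \le \overline{c}|\tilde{\eta}^{(2)}|$, which is the assertion. I do not expect any genuine obstacle here—this is the same elementary manipulation as in the proof of Theorem~3.3 in \cite{EnLaWi20}; the only points demanding a little care are verifying $b_h<1$ (guaranteed by $b_h<b_0<1$) so that division by $1-b_h$ is legitimate and the constants remain positive.
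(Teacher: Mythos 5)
Your argument is correct and is exactly the route the paper intends: combine the two-sided bound of Lemma~\ref{Lemma:  Error Estimatorboundsremainder} with the saturation Assumption~\ref{Assumption: Better approximation}, rearrange, and then use monotonicity in $b$ to pass from $b_h$ to $b_0$, which is the same elementary computation referenced from Theorem~3.3 of \cite{EnLaWi20}. Your extra remarks on positivity of $1-b_h$ and the degenerate case $J(u)=J(\tilde u)$ are fine and do not change the argument.
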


			Following  \cite{RanVi2013,EnLaWi20}, we consider the practical discretization error estimator
			\begin{align} \label{Error Estimator: practical discretization wo Remainder}
			\tilde{\eta}_h^{(2)}:=\frac{1}{2}\rho(\tilde{u})(\tilde{z}_h^{(2)}-\tilde{z})+\frac{1}{2}\rho^*(\tilde{u},\tilde{z})(\tilde{u}_h^{(2)}-\tilde{u}) ,
			\end{align}
			%
			that corresponds
			to the theoretical discretization error estimator
			\begin{align*} \label{Error Estimator: theoretical discretization wo Remainder}
			\eta_h:=\frac{1}{2}\rho(\tilde{u})(z-\tilde{z})+\frac{1}{2}\rho^*(\tilde{u},\tilde{z})(u-\tilde{u}) .
			\end{align*}
			
			\begin{lemma} \label{Lemma: Difference Error Estimators}
				Let $\eta_h$ and $\tilde{\eta}_h^{(2)}$ be as defined above, 
				and let   $\tilde{u} \in U$ and $ \tilde{z} \in V$ be arbitrary but fixed.
				Furthermore, we assume that $\mathcal{A} \in \mathcal{C}^3(U,V)$ and $J \in \mathcal{C}^3(U,\mathbb{R})$.
				If $\tilde{u}_h^{(2)} \in U_h^{(2)}$ and $\tilde{z}_h^{(2)} \in V_h^{(2)}$ are some approximations  
				to  $u\in U$ and $z\in V$, respectively,
				then, for the approximations $\tilde{u}_h^{(2)}$ and $\tilde{z}_h^{(2)}$ from the enriched spaces $U_h^{(2)}$ and $V_h^{(2)}$,
				the following estimates
				\begin{equation}\label{Difference Error Estimators1}
				\begin{split}
				|J(u)-J(\tilde{u}_h^{(2)})|- |\mathcal{R}^{(3)} -\tilde{\mathcal{R}}^{(3)(2)} |- |\tilde{\eta}_{\tilde{z}_h^{(2)}}|- |\tilde{\eta}_{\tilde{u}_h^{(2)}}|\leq |\eta_h -\tilde{\eta}_h^{(2)}| ,\\
				|\eta_h -\tilde{\eta}_h^{(2)}| \leq |J(u)-J(\tilde{u}_h^{(2)})|+ |\mathcal{R}^{(3)} -\tilde{\mathcal{R}}^{(3)(2)} |+ |\tilde{\eta}_{\tilde{z}_h^{(2)}}|+ |\tilde{\eta}_{\tilde{u}_h^{(2)}}|,
				\end{split}
				\end{equation} 
				and 
				\begin{equation}\label{Difference Error Estimators2}
				|J(u)-J(\tilde{u})|- |\rho (\tilde{u})(\tilde{z})|-|\mathcal{R}^{(3)}| \leq |\eta_h| \leq |J(u)-J(\tilde{u})|+|\rho (\tilde{u})(\tilde{z})|+|\mathcal{R}^{(3)}|
				\end{equation} 
				hold. 
				Here, 
				$\mathcal{R}^{(3)}$ is defined in (\ref{Error Estimator: Remainderterm}), $\tilde{\mathcal{R}}^{(3)(2)}$ is from Theorem~\ref{Corrollary: discrete Error Representation}, 
				and we have
				\begin{equation*} 
				\tilde{\eta}_{\tilde{u}_h^{(2)}}:= -\rho(\tilde{u}_h^{(2)})(\frac{\tilde{z}_h^{(2)}+ \tilde{z}}{2}),  \qquad \tilde{\eta}_{\tilde{z}_h^{(2)}}:=  \frac{1}{2}\rho^*(\tilde{u}_h^{(2)},\tilde{z}_h^{(2)})(\tilde{u}_h^{(2)}-\tilde{u}).
				\end{equation*}
			\end{lemma}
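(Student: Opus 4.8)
The plan is to exploit the error representations already in hand. From Theorem~\ref{Theorem: Error Representation} we have the exact identity $\eta_h = J(u)-J(\tilde u) - \rho(\tilde u)(\tilde z) - \mathcal{R}^{(3)}$, where I collect the theoretical discretization part $\eta_h$ and move the two remaining terms (the weak residual $\rho(\tilde u)(\tilde z)$ and the cubic remainder $\mathcal{R}^{(3)}$) to the other side. Rearranging gives $J(u)-J(\tilde u) = \eta_h + \rho(\tilde u)(\tilde z) + \mathcal{R}^{(3)}$, and then the triangle inequality in both directions yields estimate~(\ref{Difference Error Estimators2}) immediately:
\[
|J(u)-J(\tilde u)| - |\rho(\tilde u)(\tilde z)| - |\mathcal{R}^{(3)}| \le |\eta_h| \le |J(u)-J(\tilde u)| + |\rho(\tilde u)(\tilde z)| + |\mathcal{R}^{(3)}|.
\]

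For estimate~(\ref{Difference Error Estimators1}) I proceed analogously but now subtract the two representations. From Theorem~\ref{Corrollary: discrete Error Representation}, $J(\tilde u_h^{(2)}) - J(\tilde u) = \tilde\eta_h^{(2)} - \rho(\tilde u)(\tilde z) + \tilde\eta_{\tilde u_h^{(2)}} + \tilde\eta_{\tilde z_h^{(2)}} + \tilde{\mathcal{R}}^{(3)(2)}$, using the definitions $\tilde\eta_{\tilde u_h^{(2)}} = -\rho(\tilde u_h^{(2)})(\tfrac{\tilde z_h^{(2)}+\tilde z}{2})$ and $\tilde\eta_{\tilde z_h^{(2)}} = \tfrac12\rho^*(\tilde u_h^{(2)},\tilde z_h^{(2)})(\tilde u_h^{(2)}-\tilde u)$ from the lemma statement. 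Subtracting this from the identity $J(u)-J(\tilde u) = \eta_h + \rho(\tilde u)(\tilde z) + \mathcal{R}^{(3)}$ kills the common term $-\rho(\tilde u)(\tilde z)$ against $+\rho(\tilde u)(\tilde z)$ (note the sign placement: $-\rho(\tilde u)(\tilde z)$ appears in the discrete representation while $+\rho(\tilde u)(\tilde z)$ appears when we solve the Theorem~\ref{Theorem: Error Representation} identity for $J(u)-J(\tilde u)$, so I must be careful here — actually both representations contain $-\rho(\tilde u)(\tilde z)$ with the same sign, and it cancels in the difference) and leaves
\[
J(u) - J(\tilde u_h^{(2)}) = \eta_h - \tilde\eta_h^{(2)} - \tilde\eta_{\tilde u_h^{(2)}} - \tilde\eta_{\tilde z_h^{(2)}} + (\mathcal{R}^{(3)} - \tilde{\mathcal{R}}^{(3)(2)}).
\]
Solving for $\eta_h - \tilde\eta_h^{(2)}$ and applying the triangle inequality in both directions produces exactly the two bounds in~(\ref{Difference Error Estimators1}).

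The only genuinely delicate point is bookkeeping: one must verify that the terms $-\rho(\tilde u)(\tilde z)$ truly cancel and that the cubic remainders from the two different Taylor expansions are grouped as the single difference $\mathcal{R}^{(3)} - \tilde{\mathcal{R}}^{(3)(2)}$ — the two remainders have the same formal integrand but different arguments ($e = u - \tilde u$, $e^* = z - \tilde z$ versus $e = \tilde u_h^{(2)} - \tilde u$, $e^* = \tilde z_h^{(2)} - \tilde z$), so they do not individually vanish and must be carried together. Once the algebraic identity for $\eta_h - \tilde\eta_h^{(2)}$ is established, everything else is two applications of $|a| - |b| \le |a+b| \le |a| + |b|$ (equivalently $\big||a|-|b|\big| \le |a \pm b| \le |a|+|b|$), exactly as in Lemma~\ref{Lemma:  Error Estimatorboundsremainder}. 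I would write the proof by first stating the two exact identities, then the two subtractions, and finally invoking the triangle inequality, noting that no saturation assumption is needed for this lemma — it is purely a consequence of the error representations.
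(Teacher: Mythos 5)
Your proposal is correct and follows essentially the same route as the paper: write down the two error representations from Theorem~\ref{Theorem: Error Representation} and Theorem~\ref{Corrollary: discrete Error Representation}, subtract them so that the $\rho(\tilde{u})(\tilde{z})$ terms cancel and the identity $J(u)-J(\tilde{u}_h^{(2)})=\eta_h-\tilde{\eta}_h^{(2)}+\mathcal{R}^{(3)}-\tilde{\mathcal{R}}^{(3)(2)}-\tilde{\eta}_{\tilde{u}_h^{(2)}}-\tilde{\eta}_{\tilde{z}_h^{(2)}}$ emerges, then apply the triangle inequality; the only difference is that the paper simply cites \cite{EnLaWi20} for \eqref{Difference Error Estimators2} while you rederive it, and your momentary sign wobble on $\rho(\tilde{u})(\tilde{z})$ is immaterial since that term cancels in the subtraction and only its modulus enters \eqref{Difference Error Estimators2}.
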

			\begin{proof}
				The inequality (\ref{Difference Error Estimators2}) was already proven in \cite{EnLaWi20}.
				From Theorem \ref{Theorem: Error Representation} and Theorem~\ref{Corrollary: discrete Error Representation}, we get the identities 
				\begin{equation*}
				J(u)-J(\tilde{u})= \underbrace{\frac{1}{2}\rho(\tilde{u})(z-\tilde{z})+\frac{1}{2}\rho^*(\tilde{u},\tilde{z})(u-\tilde{u}) }_{\eta_h}
				+\rho (\tilde{u})(\tilde{z}) + \mathcal{R}^{(3)},
				\end{equation*}
				and
				\begin{equation*}
				J(\tilde{u}_h^{(2)})-J(\tilde{u})=\underbrace{ \frac{1}{2}\rho(\tilde{u})(\tilde{z}_h^{(2)}-\tilde{z})+\frac{1}{2}\rho^*(\tilde{u},\tilde{z})(\tilde{u}_h^{(2)}-\tilde{u}) }_{\tilde{\eta}_h^{(2)}}
				+\tilde{\eta}_{\tilde{u}_h^{(2)}} + \tilde{\eta}_{\tilde{z}_h^{(2)}}+\rho (\tilde{u})(\tilde{z}) + \tilde{\mathcal{R}}^{(3)(2)}.
				\end{equation*}					
				If we substract the equations from above, then we obtain  
				\begin{equation*}
				J(u)-	J(\tilde{u}_h^{(2)} )= \eta_h - {\tilde{\eta}_h^{(2)}} + \mathcal{R}^{(3)}-  \tilde{\mathcal{R}}^{(3)(2)} -\tilde{\eta}_{\tilde{u}_h^{(2)}} - \tilde{\eta}_{\tilde{z}_h^{(2)}}.
				\end{equation*}
				From this equality, we conclude (\ref{Difference Error Estimators1}) by using triangle inequality.
			\end{proof}
			
			\begin{remark}
				Indeed a refined analysis yields the inequalities
				\begin{equation*}
				|J(u)-J(\tilde{u}_h^{(2)})|- | \mathcal{R}^{(3)}-  \tilde{\mathcal{R}}^{(3)(2)} -\tilde{\eta}_{\tilde{u}_h^{(2)}} - \tilde{\eta}_{\tilde{z}_h^{(2)}}|\leq |\eta_h -\tilde{\eta}_h^{(2)}|  \leq |J(u)-J(\tilde{u}_h^{(2)})|+ | \mathcal{R}^{(3)}-  \tilde{\mathcal{R}}^{(3)(2)} -\tilde{\eta}_{\tilde{u}_h^{(2)}} - \tilde{\eta}_{\tilde{z}_h^{(2)}}|.
				\end{equation*} 
			\end{remark}
			
			Similar as in \cite{EnLaWi20}, we can now show the following result:
			\begin{lemma}\label{Lemma: Bounds for Error Estimator}
				If the conditions of Lemma~\ref{Lemma: Difference Error Estimators} are fulfilled, then the inequalities
				%
				\begin{align*}
				\label{Bounds for Error Estimator}
				|\tilde{\eta}_h^{(2)}| - \gamma(\mathcal{A},J,\tilde{u}_h^{(2)},\tilde{z}_h^{(2)},u,\tilde{u},\tilde{z}) \leq |J(u)-J(\tilde{u})| \leq |\tilde{\eta}_h^{(2)}| + \gamma(\mathcal{A},J,\tilde{u}_h^{(2)},\tilde{z}_h^{(2)},u,\tilde{u},\tilde{z}) 
				\end{align*}
				are valid, where 
				\begin{equation}
				\label{twick_Nov_13_2018_eq_1}
				\gamma(\mathcal{A},J,\tilde{u}_h^{(2)},\tilde{z}_h^{(2)},u,\tilde{u},\tilde{z}) :=|J(u)-J(\tilde{u}_h^{(2)})|+ |\mathcal{R}^{(3)} -\tilde{\mathcal{R}}^{(3)(2)} |+ |\tilde{\eta}_{\tilde{z}_h^{(2)}}|+ |\tilde{\eta}_{\tilde{u}_h^{(2)}}|+|\rho (\tilde{u})(\tilde{z})|+|\mathcal{R}^{(3)}|.
				\end{equation}
			\end{lemma}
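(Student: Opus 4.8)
The plan is to obtain the claim as a purely formal consequence of the two pairs of inequalities established in Lemma~\ref{Lemma: Difference Error Estimators}, combined with a single application of the triangle inequality relating $|\tilde{\eta}_h^{(2)}|$ to $|\eta_h|$ and $|\eta_h-\tilde{\eta}_h^{(2)}|$; this mirrors the argument already used for Theorem~\ref{Theorem: Efficiency and Reliability with remainder}. First I would write $\tilde{\eta}_h^{(2)}=\eta_h-(\eta_h-\tilde{\eta}_h^{(2)})$ and apply the triangle inequality in the form
\[
|\eta_h|-|\eta_h-\tilde{\eta}_h^{(2)}|\ \le\ |\tilde{\eta}_h^{(2)}|\ \le\ |\eta_h|+|\eta_h-\tilde{\eta}_h^{(2)}| .
\]

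For the upper bound on $|\tilde{\eta}_h^{(2)}|$ I would substitute, into the right-hand side above, the upper bound for $|\eta_h|$ from \eqref{Difference Error Estimators2} and the upper bound for $|\eta_h-\tilde{\eta}_h^{(2)}|$ from \eqref{Difference Error Estimators1}. Collecting the six resulting absolute-value terms gives exactly $|J(u)-J(\tilde{u})|+\gamma(\mathcal{A},J,\tilde{u}_h^{(2)},\tilde{z}_h^{(2)},u,\tilde{u},\tilde{z})$, with $\gamma$ as in \eqref{twick_Nov_13_2018_eq_1}; rearranging yields the left inequality $|\tilde{\eta}_h^{(2)}|-\gamma\le|J(u)-J(\tilde{u})|$.

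For the companion bound I would proceed symmetrically: into the left-hand side of the triangle inequality I insert the lower bound for $|\eta_h|$ from \eqref{Difference Error Estimators2} and again the upper bound for $|\eta_h-\tilde{\eta}_h^{(2)}|$ from \eqref{Difference Error Estimators1}. The same six terms reappear, now with a minus sign, so $|\tilde{\eta}_h^{(2)}|\ge|J(u)-J(\tilde{u})|-\gamma$, which is equivalent to the right inequality $|J(u)-J(\tilde{u})|\le|\tilde{\eta}_h^{(2)}|+\gamma$.

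The only point requiring care is the sign bookkeeping: on both sides of the triangle inequality the difference term $|\eta_h-\tilde{\eta}_h^{(2)}|$ must be estimated from above via \eqref{Difference Error Estimators1}, whereas $|\eta_h|$ is estimated from below for the lower bound and from above for the upper bound via \eqref{Difference Error Estimators2}. There is no genuine obstacle here; everything needed is already contained in Lemma~\ref{Lemma: Difference Error Estimators}, and its hypotheses ($\mathcal{A}\in\mathcal{C}^3(U,V)$, $J\in\mathcal{C}^3(U,\mathbb{R})$, together with $\tilde{u}_h^{(2)}\in U_h^{(2)}$ and $\tilde{z}_h^{(2)}\in V_h^{(2)}$ approximating $u$ and $z$) are precisely those assumed in the present lemma, so no additional regularity or saturation input is invoked.
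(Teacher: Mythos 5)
Your proposal is correct: combining the triangle inequality $|\eta_h|-|\eta_h-\tilde{\eta}_h^{(2)}|\le|\tilde{\eta}_h^{(2)}|\le|\eta_h|+|\eta_h-\tilde{\eta}_h^{(2)}|$ with the upper bound from \eqref{Difference Error Estimators1} and the two-sided bound from \eqref{Difference Error Estimators2} yields exactly the six terms of $\gamma$ in \eqref{twick_Nov_13_2018_eq_1}, and the sign bookkeeping is handled correctly. This is essentially the same argument the paper invokes (by reference to the analogous Lemma 3.8 in \cite{EnLaWi20}), so no further comment is needed.
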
		
			\begin{proof}
				The proof follows the same idea as in the proof of Lemma 3.8 in \cite{EnLaWi20}.
			\end{proof}

			\subsection{Computable error estimator under a strengthend saturation assumption}
			
			Under a strengthened saturation assumption, the results from above can also be derived for the discretization error estimator $\tilde{\eta}_h^{(2)}$. 
			We suppose the following:
			%
			\begin{assumption}[Strengthened saturation assumption for the goal functional]  \label{Assumption: With Remainder}
				Let $\tilde{u}_h^{(2)}$ be an arbitrary, but fixed approximation  in $U_h^{(2)}$, 
				and let $\tilde{u}$ be some approximation. 
				Then the inequality
				\[
				\gamma(\mathcal{A},J,\tilde{u}_h^{(2)},\tilde{z}_h^{(2)},u,\tilde{u},\tilde{z}) < b_{h, \gamma}|
				J(u)-J(\tilde{u}) |,
				\]
				holds 
				for some $b_{h, \gamma}<b_{0, \gamma}$ with some fixed $b_{0, \gamma} \in (0,1)$,
				where $\gamma(\cdot)$ is defined in \eqref{twick_Nov_13_2018_eq_1}.
			\end{assumption}
			\begin{remark}
				Assumption~\ref{Assumption: With Remainder} implies Assumption~\ref{Assumption: Better approximation}. 
				However, if vice versa,  Assumption~\ref{Assumption: Better approximation} holds, then  Assumption~\ref{Assumption: With Remainder} is fulfilled up to higher-order terms  ($ |\mathcal{R}^{(3)} -\tilde{\mathcal{R}}^{(3)(2)} |$, $|\mathcal{R}^{(3)}|$), 
				and by the parts $|\rho (\tilde{u})(\tilde{z})|$, $\tilde{\eta}_{\tilde{u}_h^{(2)}}$, $\tilde{\eta}_{\tilde{z}_h^{(2)}}$ which can be controlled by the accuracy of the approximations.
				{In particular, if 
					the approximation $  \tilde{u}_h^{(2)}$ in the enriched space coincides with the finite element solution ${u}_h^{(2)}$, i.e. ${u}_h^{(2)} = \tilde{u}_h^{(2)}$, 
					then
					$\tilde{\eta}_{\tilde{u}_h^{(2)}}=0$. If the same condition, i.e. ${z}_h^{(2)} = \tilde{z}_h^{(2)}$, is fulfilled for the adjoint problem, then $\tilde{\eta}_{\tilde{z}_h^{(2)}}=0.$}
			\end{remark}

			\begin{theorem} \label{Theorem: Efficiency and Reliability without remainder}
				Let Assumption~\ref{Assumption: With Remainder} be satisfied. 
				Then the  practical error estimator $\tilde{\eta}_h^{(2)}$, 
				defined in \eqref{Error Estimator: practical discretization wo Remainder}, 
				is efficient and reliable, i.e.
				\begin{equation*}
				\label {Estimate: hEffektivity-Remainder}
				\underline{c}_{h, \gamma}|\tilde{\eta}_h^{(2)}| \leq | J(u)-J(\tilde{u}) | \leq 	\overline{c}_{h, \gamma}|\tilde{\eta}_h^{(2)}|
				\quad \mbox{and} \quad
				\underline{c}_{\gamma}|\tilde{\eta}_h^{(2)}| \leq | J(u)-J(\tilde{u}) | \leq 	\overline{c}_{\gamma}|\tilde{\eta}_h^{(2)}| , 
				\end{equation*}
				with the 
				positive 
				constants $\underline{c}_{h, \gamma}:= 1/(1+b_{h, \gamma})$, $\overline{c}_{h, \gamma}:=1/( 1-b_{h, \gamma})$, $\underline{c}_{\gamma}:= 1/(1+b_{0, \gamma})$, 
				$\overline{c}_{\gamma}:=1/( 1-b_{0, \gamma})$. 
				
				\begin{proof}
					The proof follows the same steps as in \cite{EnLaWi20}. The result can be derived from Lemma \ref{Lemma: Bounds for Error Estimator} and  Assumption~\ref{Assumption: With Remainder}. For further information, we refer to the proof in \cite{EnLaWi20}. 
				\end{proof}
			\end{theorem}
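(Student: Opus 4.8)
The plan is to combine the two-sided bound from Lemma~\ref{Lemma: Bounds for Error Estimator} with the strengthened saturation hypothesis of Assumption~\ref{Assumption: With Remainder}, exactly mirroring the passage from Lemma~\ref{Lemma:  Error Estimatorboundsremainder} and Assumption~\ref{Assumption: Better approximation} to Theorem~\ref{Theorem: Efficiency and Reliability with remainder}. First I would observe that $\gamma(\mathcal{A},J,\tilde{u}_h^{(2)},\tilde{z}_h^{(2)},u,\tilde{u},\tilde{z})$, being by \eqref{twick_Nov_13_2018_eq_1} a sum of moduli, is nonnegative; hence the inequality $\gamma(\cdot) < b_{h,\gamma}|J(u)-J(\tilde{u})|$ of the assumption forces both $|J(u)-J(\tilde{u})|>0$ and $0<b_{h,\gamma}<b_{0,\gamma}<1$, so that all four constants $\underline{c}_{h,\gamma}$, $\overline{c}_{h,\gamma}$, $\underline{c}_{\gamma}$, $\overline{c}_{\gamma}$ are well defined and positive.

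Next I would insert $\gamma(\cdot) < b_{h,\gamma}|J(u)-J(\tilde{u})|$ into the two inequalities of Lemma~\ref{Lemma: Bounds for Error Estimator}. The upper bound $|J(u)-J(\tilde{u})| \le |\tilde{\eta}_h^{(2)}| + \gamma(\cdot)$ then yields $(1-b_{h,\gamma})|J(u)-J(\tilde{u})| < |\tilde{\eta}_h^{(2)}|$, i.e. $|J(u)-J(\tilde{u})| \le \overline{c}_{h,\gamma}|\tilde{\eta}_h^{(2)}|$ with $\overline{c}_{h,\gamma}=1/(1-b_{h,\gamma})$, the positivity of this constant being precisely where $b_{h,\gamma}<1$ enters. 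The lower bound $|\tilde{\eta}_h^{(2)}| - \gamma(\cdot) \le |J(u)-J(\tilde{u})|$ gives $|\tilde{\eta}_h^{(2)}| \le (1+b_{h,\gamma})|J(u)-J(\tilde{u})|$, i.e. $\underline{c}_{h,\gamma}|\tilde{\eta}_h^{(2)}| \le |J(u)-J(\tilde{u})|$ with $\underline{c}_{h,\gamma}=1/(1+b_{h,\gamma})$. This settles the first pair of estimates.

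The second pair then follows purely by monotonicity: since $0<b_{h,\gamma}<b_{0,\gamma}<1$, the maps $t\mapsto 1/(1+t)$ and $t\mapsto 1/(1-t)$ give $\underline{c}_{\gamma} = 1/(1+b_{0,\gamma}) \le \underline{c}_{h,\gamma}$ and $\overline{c}_{h,\gamma} \le 1/(1-b_{0,\gamma}) = \overline{c}_{\gamma}$, so that $\underline{c}_{\gamma}|\tilde{\eta}_h^{(2)}| \le \underline{c}_{h,\gamma}|\tilde{\eta}_h^{(2)}| \le |J(u)-J(\tilde{u})| \le \overline{c}_{h,\gamma}|\tilde{\eta}_h^{(2)}| \le \overline{c}_{\gamma}|\tilde{\eta}_h^{(2)}|$, which is the claim.

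I do not anticipate a real obstacle in these computations; they amount to the same few lines as in the corresponding proof of \cite{EnLaWi20}. The only substantive issue is conceptual, and it is already absorbed into the hypothesis rather than into the proof: Assumption~\ref{Assumption: With Remainder} must be strong enough to dominate \emph{all} the additional terms bundled into $\gamma(\cdot)$, namely the goal-functional saturation term $|J(u)-J(\tilde{u}_h^{(2)})|$, the cubic remainders $|\mathcal{R}^{(3)}|$ and $|\mathcal{R}^{(3)}-\tilde{\mathcal{R}}^{(3)(2)}|$, the Galerkin-orthogonality defect $|\rho(\tilde{u})(\tilde{z})|$, and the two new estimator parts $\tilde{\eta}_{\tilde{u}_h^{(2)}}$ and $\tilde{\eta}_{\tilde{z}_h^{(2)}}$. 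As noted in the remark preceding the theorem, $\tilde{\eta}_{\tilde{u}_h^{(2)}}$ and $\tilde{\eta}_{\tilde{z}_h^{(2)}}$ vanish when the enriched-space primal and adjoint problems are solved exactly and are otherwise controlled by the accuracy of the approximations, while the cubic remainders are of higher order; this is what makes the strengthened assumption a realistic requirement in practice.
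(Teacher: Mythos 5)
Your proposal is correct and follows exactly the route the paper indicates: insert the strengthened saturation bound of Assumption~\ref{Assumption: With Remainder} into the two-sided estimate of Lemma~\ref{Lemma: Bounds for Error Estimator}, rearrange to obtain the constants $1/(1\pm b_{h,\gamma})$, and pass to $1/(1\pm b_{0,\gamma})$ by monotonicity, which is precisely the argument of Theorem 3.3 in \cite{EnLaWi20} that the paper cites. No gaps; you have simply written out the elementary algebra the paper leaves to the reference.
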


			\subsection{Bounds of the effectivity indices}
			
			As in \cite{EnLaWi20} we derive bounds for the effectivity indices $I_{eff}$ and $I_{eff,h}$ which are defined by
			\begin{equation} \label{Definition: Ieffs}
			I_{eff}:= \frac{|\tilde{\eta}^{(2)}|}{|J(u)-J(\tilde{u})|} 
			\quad \mbox{and} \quad
			I_{eff,h}:= \frac{|\tilde{\eta}_h^{(2)}|}{|J(u)-J(\tilde{u})|},
			\end{equation}
			respectively. The quantities $ \tilde{\eta}^{(2)}$ and $\tilde{\eta}_h^{(2)}$ are defined as in (\ref{Definition: Error Estimator}) and (\ref{Error Estimator: practical discretization wo Remainder}).
			We then obtain:
			\begin{theorem}[Bounds on the effectivity index] \label{Theorem: Ieffbounds}
				Let us  assume that $\mathcal{A} \in \mathcal{C}^3(U,V)$ and $J \in \mathcal{C}^3(U,\mathbb{R})$, and let   $\tilde{u} \in U$ and $ \tilde{z} \in V$ be arbitrary, but fixed. 
				Then the following two statements are true:
				\begin{enumerate}
					\item If Assumption~\ref{Assumption: Better approximation} is fulfilled,  then $I_{eff} \in [1-b_0,1+b_0]$, and	
					if additionally $b_h \rightarrow 0$, then  $I_{eff} \rightarrow 1$.
					\item If Assumption~\ref{Assumption: With Remainder} is fulfilled,  
					then $I_{eff,h} \in [1-b_{0,\gamma},1+b_{0,\gamma}]$, and  if additionally 
					$b_{h,\gamma} \rightarrow 0$, then  $I_{eff,h} \rightarrow 1$.
				\end{enumerate}
				
			\end{theorem}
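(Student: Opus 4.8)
The plan is to read off both statements directly from the two-sided estimates already established, simply by dividing through by $|J(u)-J(\tilde u)|$. For the first statement I would invoke Theorem~\ref{Theorem: Efficiency and Reliability with remainder}, which under Assumption~\ref{Assumption: Better approximation} gives
\[
\frac{1}{1+b_h}\,|\tilde{\eta}^{(2)}| \le |J(u)-J(\tilde{u})| \le \frac{1}{1-b_h}\,|\tilde{\eta}^{(2)}|.
\]
Rearranging the left inequality yields $|\tilde{\eta}^{(2)}| \le (1+b_h)\,|J(u)-J(\tilde{u})|$, and rearranging the right inequality yields $(1-b_h)\,|J(u)-J(\tilde{u})| \le |\tilde{\eta}^{(2)}|$. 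Dividing both by $|J(u)-J(\tilde{u})|$ (which is nonzero by Assumption~\ref{Assumption: Better approximation}, since otherwise the saturation inequality would force $|J(u)-J(\tilde u_h^{(2)})|<0$) gives $1-b_h \le I_{eff} \le 1+b_h$. Since $b_h < b_0$ we have $[1-b_h,1+b_h] \subseteq [1-b_0,1+b_0]$, which proves the membership claim. The limit statement then follows by a squeeze argument: as $b_h \to 0$ the enclosing interval $[1-b_h,1+b_h]$ collapses to $\{1\}$, forcing $I_{eff}\to 1$.

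The second statement is entirely parallel, now using Theorem~\ref{Theorem: Efficiency and Reliability without remainder} in place of Theorem~\ref{Theorem: Efficiency and Reliability with remainder}: under Assumption~\ref{Assumption: With Remainder} one has
\[
\frac{1}{1+b_{h,\gamma}}\,|\tilde{\eta}_h^{(2)}| \le |J(u)-J(\tilde{u})| \le \frac{1}{1-b_{h,\gamma}}\,|\tilde{\eta}_h^{(2)}|,
\]
from which the same rearrangement gives $1-b_{h,\gamma} \le I_{eff,h} \le 1+b_{h,\gamma}$, hence $I_{eff,h}\in[1-b_{0,\gamma},1+b_{0,\gamma}]$ because $b_{h,\gamma}<b_{0,\gamma}$, and $I_{eff,h}\to 1$ as $b_{h,\gamma}\to 0$ by the same squeeze.

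There is essentially no hard part here: the theorem is a bookkeeping corollary of the reliability/efficiency estimates, and the only point requiring a word of care is that the effectivity indices are well defined, i.e.\ that the saturation assumptions exclude $|J(u)-J(\tilde u)|=0$. I would state this observation once at the start and then carry out the two (identical) chains of elementary manipulations.
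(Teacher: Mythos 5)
Your proposal is correct and essentially coincides with the paper's argument: the paper obtains the bounds directly from Lemma~\ref{Lemma:  Error Estimatorboundsremainder} together with Assumption~\ref{Assumption: Better approximation} (resp.\ Lemma~\ref{Lemma: Bounds for Error Estimator} together with Assumption~\ref{Assumption: With Remainder}), which is exactly the content you invoke in the repackaged form of Theorems~\ref{Theorem: Efficiency and Reliability with remainder} and~\ref{Theorem: Efficiency and Reliability without remainder}, followed by the same division by $|J(u)-J(\tilde{u})|$. Your explicit observation that the saturation assumptions force $|J(u)-J(\tilde{u})|\neq 0$, so that the effectivity indices are well defined, is a detail the paper leaves implicit.
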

			\begin{proof}
				The bounds immediately follow from 
				Lemma \ref{Lemma:  Error Estimatorboundsremainder} and Assumption~\ref{Assumption: Better approximation}, and  Lemma \ref{Lemma: Bounds for Error Estimator} and Assumption~\ref{Assumption: With Remainder}.
			\end{proof}
			
			\begin{remark}
				In practice, the efficiency indices
				$$	I_{eff,\mathcal{R}}:=\frac{|\tilde{\eta}_h^{(2)} - \rho(\tilde{u})(\tilde{z})+\tilde{\mathcal{R}}^{(3)(2)}|}{|J(u)-J(\tilde{u})|}$$ and 
				$I_{eff,h}$ 
				almost coincide.
			\end{remark}
			\begin{remark}
				In this section, we omit errors  coming from inexact data approximation and numerical quadrature.
			\end{remark}
			
			
			\section{Separation of the error estimator parts} 
			\label{Section: Localization and discussions on the error estimator parts} 
			In this section, we briefly describe the different 
			parts of the error estimator.
			The error estimator, which is derived in the previous section, consists of five parts. 
			The first three parts $ \tilde{\eta}_h^{(2)}$,  
			$\eta_k$, $\eta^{(2)}_\mathcal{R}$ 
			were already discussed in \cite{EnLaWi20}. 
			We will now focus on
			the fourth and fifth part, which are novel.
			For completeness of presentations, we give a short recap about the other parts.
			\begin{proposition}
				\label{prop_error_est}
				We split $\tilde{\eta}^{(2)}$ defined in (\ref{Definition: Error Estimator}) into the following five parts $\tilde{\eta}_h^{(2)}$, $\eta_k$, 
				$\eta^{(2)}_\mathcal{R}$, $\tilde{\eta}_{\tilde{u}_h^{(2)}}$, and $\tilde{\eta}_{\tilde{z}_h^{(2)}}$: 
				\begin{equation}
				\footnotesize
				\tilde{\eta}^{(2)}:=\underbrace{\frac{1}{2}\rho(\tilde{u})(\tilde{z}_h^{(2)}-\tilde{z})+\frac{1}{2}\rho^*(\tilde{u},\tilde{z})(\tilde{u}_h^{(2)}-\tilde{u}) }_{:= \tilde{\eta}_h^{(2)}}
				-\underbrace{\rho (\tilde{u})(\tilde{z})}_{:=\eta_k} + \underbrace{\tilde{\mathcal{R}}^{(3)(2)}}_{:=\eta^{(2)}_\mathcal{R}}\underbrace{-\rho({\tilde{u}_h^{(2)}})\left(\frac{\tilde{z}_h^{(2)}+\tilde{z}}{2}\right)}_ {:=\tilde{\eta}_{\tilde{u}_h^{(2)}}}+\underbrace{\rho^*(\tilde{u}_h^{(2)},\tilde{z}_h^{(2)})\left(\frac{\tilde{u}_h^{(2)}-\tilde{u}}{2}\right)}_{:=\tilde{\eta}_{\tilde{z}_h^{(2)}}}. \nonumber
				\end{equation}
				If the approximations ${u}_h^{(2)} \in U_h^{(2)}$ and $\tilde{z}_h^{(2)} \in V_h^{(2)}$ are the solutions of the primal and the adjoint problem, respectively, 
				then 
				$\tilde{\eta}^{(2)}$ coincides with the error estimator from \cite{EnLaWi20},  and the fourth and the fifth terms are zero.
			\end{proposition}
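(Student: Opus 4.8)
The plan is to dispatch the two assertions of the proposition separately: the five-term decomposition, which is a pure rearrangement of Definition~\eqref{Definition: Error Estimator}, and the vanishing of the fourth and fifth terms when $\tilde u_h^{(2)}$ and $\tilde z_h^{(2)}$ are the genuine Galerkin solutions on the enriched spaces.

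For the decomposition I would simply match the right-hand side displayed in the statement against~\eqref{Definition: Error Estimator} summand by summand. The only non-cosmetic step is to use linearity of $w\mapsto\rho^*(\tilde u_h^{(2)},\tilde z_h^{(2)})(w)$ to rewrite $\tfrac12\rho^*(\tilde u_h^{(2)},\tilde z_h^{(2)})(\tilde u_h^{(2)}-\tilde u)=\rho^*(\tilde u_h^{(2)},\tilde z_h^{(2)})\bigl(\tfrac{\tilde u_h^{(2)}-\tilde u}{2}\bigr)$; after that, the five braces in the statement are, in order, exactly the five summands of~\eqref{Definition: Error Estimator}. So there is nothing to prove beyond introducing the names $\tilde\eta_h^{(2)}$, $\eta_k$, $\eta^{(2)}_{\mathcal R}$, $\tilde\eta_{\tilde u_h^{(2)}}$, $\tilde\eta_{\tilde z_h^{(2)}}$, and the identity holds for all admissible $\tilde u_h^{(2)},\tilde z_h^{(2)},\tilde u,\tilde z$.

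For the second assertion I would specialize to $\tilde u_h^{(2)}=u_h^{(2)}$ solving $\mathcal A(u_h^{(2)})=0$ in $(V_h^{(2)})^*$ and $\tilde z_h^{(2)}=z_h^{(2)}$ solving $(\mathcal A'(u_h^{(2)}))^*(z_h^{(2)})=J'(u_h^{(2)})$ in $(U_h^{(2)})^*$, together with the standard nesting $U_h\subset U_h^{(2)}$, $V_h\subset V_h^{(2)}$ and $\tilde u\in U_h$, $\tilde z\in V_h$ as used in~\cite{EnLaWi20}. Then $\tilde\eta_{\tilde u_h^{(2)}}=-\rho(u_h^{(2)})\bigl(\tfrac{z_h^{(2)}+\tilde z}{2}\bigr)=\mathcal A(u_h^{(2)})\bigl(\tfrac{z_h^{(2)}+\tilde z}{2}\bigr)$ vanishes, because $\rho(u_h^{(2)})$ annihilates $V_h^{(2)}$ by the discrete primal equation and $\tfrac{z_h^{(2)}+\tilde z}{2}\in V_h^{(2)}$ (using $z_h^{(2)}\in V_h^{(2)}$ and $\tilde z\in V_h\subset V_h^{(2)}$); and $\tilde\eta_{\tilde z_h^{(2)}}=\tfrac12\rho^*(u_h^{(2)},z_h^{(2)})(u_h^{(2)}-\tilde u)$ vanishes, because $\rho^*(u_h^{(2)},z_h^{(2)})$ annihilates $U_h^{(2)}$ by the discrete adjoint equation and $u_h^{(2)}-\tilde u\in U_h^{(2)}$ (using $\tilde u\in U_h\subset U_h^{(2)}$). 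Substituting these two zeros into the decomposition leaves $\tilde\eta^{(2)}=\tilde\eta_h^{(2)}-\eta_k+\eta^{(2)}_{\mathcal R}$ with $u_h^{(2)},z_h^{(2)}$ playing the role of the exact solutions $u,z$, which is precisely the estimator analyzed in~\cite{EnLaWi20}; this also recovers the identities recorded in the remark after Theorem~\ref{Corrollary: discrete Error Representation}.

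I do not anticipate a real difficulty. The single point I would state carefully is the ambient-space bookkeeping in the second part: the two new terms disappear only because their test function $\tfrac{z_h^{(2)}+\tilde z}{2}$ and their argument $u_h^{(2)}-\tilde u$ happen to lie in $V_h^{(2)}$ and $U_h^{(2)}$ respectively, so that Galerkin orthogonality of the discrete primal and adjoint problems is applicable. This relies on the nesting $U_h\subset U_h^{(2)}$, $V_h\subset V_h^{(2)}$ (or at least on $\tilde u\in U_h^{(2)}$ and $\tilde z\in V_h^{(2)}$), which is tacit in the comparison with~\cite{EnLaWi20}; dropping it would make this step false.
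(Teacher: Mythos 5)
Your proposal is correct and follows essentially the same route as the paper, which gives no separate proof here: the decomposition is a term-by-term match with \eqref{Definition: Error Estimator} (moving the factor $\tfrac12$ inside by linearity), and the vanishing of $\tilde{\eta}_{\tilde{u}_h^{(2)}}$ and $\tilde{\eta}_{\tilde{z}_h^{(2)}}$ is exactly the Galerkin-orthogonality observation already recorded in the remark after Theorem~\ref{Corrollary: discrete Error Representation}. Your explicit note that this needs $\tilde{u}\in U_h\subset U_h^{(2)}$ and $\tilde{z}\in V_h\subset V_h^{(2)}$ (so that the test arguments lie in the enriched spaces) is a point the paper leaves tacit, and it is stated correctly.
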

			
			\paragraph{The first  part $\tilde{\eta}_h^{(2)}$:} 
			
			Following
			\cite{RanVi2013,EnLaWi18,EnLaWi20}, $\tilde{\eta}_h^{(2)}$ is related to  the discretization error.
			We use the partition of unity approach developed in \cite{RiWi15_dwr} to localize $\tilde{\eta}_h^{(2)}$. Here the weak form of the estimator part can be used without 
			applying
			integration by part.

			\paragraph{The second  part $\eta_k$:}
			The  part $\eta_k=\rho(\tilde{u})(\tilde{z})$ measures the iteration error; 
			cf. \cite{EnLaWi20,RanVi2013,EnLaWi18}. 

			\paragraph{The third  part $\eta^{(2)}_\mathcal{R}$:}
			The third part $R^{(3)(2)}$ is of higher order, and  is often neglected in the literature. 
			In \cite{EnLaWi20}, we observed in several carefully designed studies 
			that this part is indeed neglectable.
			
			\paragraph{The fourth part $\tilde{\eta}_{\tilde{u}_h^{(2)}}$:}
			The forth part $\tilde{\eta}_{\tilde{u}_h^{(2)}}$ is a measure of the approximation quality of $\tilde{u}_h^{(2)}$.  If $\tilde{u}_h^{(2)}$ solves the (discrete)
			primal problem on the enriched space $U_h^{(2)}$, then we obtain that $\tilde{\eta}_{\tilde{u}_h^{(2)}}=0$. 
			The quantity indicates 
			whether
			the problem needs to be solved with higher accuracy
			or 
			the current approximation work sufficiently well.
			To this end, adaptive stopping criteria for both nonlinear and linear solvers can be designed 
			as 
			in \cite{EnLaWi20,EnLaWi18,EnLaNeiWoWi2020,RanVi2013,RaWeWo10,MeiRaVih109}.
			
			\paragraph{The fifth part $\tilde{\eta}_{\tilde{z}_h^{(2)}}$:}
			The fifth and last part $\tilde{\eta}_{\tilde{z}_h^{(2)}}$ provides 
			a quantity  to estimate 
			the approximation quality  of $\tilde{z}_h^{(2)}$.  In contrast to the fourth part, the fifth part $\tilde{\eta}_{\tilde{z}_h^{(2)}}=0$ if  $\tilde{z}_h^{(2)}$ solves the 
			(discrete)
			adjoint problem on the enriched space $V_h^{(2)}$. 
			The quantity indicates whether we should solve the problem more accurate or  it is fine to keep the current approximation. This can be used in an adaptive stopping rule criteria for the linear solver similar 
			to
			\cite{RaWeWo10,MeiRaVih109,ErnVohral2013}.

			\section{Algorithms}
			\label{Section: Algorithms}
			Based on the error estimator discussed in Proposition \ref{prop_error_est},
			we now design an adaptive algorithm. We would like to mention that this is just one realization of several classes of algorithms, 
			which can be constructed with this idea. Let us 
			start with the initial mesh $\mathcal{T}_h^1$ 
			and the corresponding finite element spaces 
			$V_h^1$,  $U_h^1$, $U_{h}^{1,(2)}$ and $V_{h}^{1,(2)}$, where $U_{h}^{1,(2)}$ and $V_{h}^{1,(2)}$ are the
			enriched finite element spaces. 
			For the resulting adaptively refined meshes   $\mathcal{T}_h^\ell$, with $\ell \geq 2$, we consider the following finite element spaces:
			$V_h^\ell$,  $U_h^\ell$, $U_{h}^{\ell,(2)}$ and $V_{h}^{\ell,(2)}$, where $U_{h}^{\ell ,(2)}$ and $V_{h}^{\ell,(2)}$ are the
			enriched finite element spaces. 
			To this end, we design Algorithm \ref{Outer DWR Algorithm}.

			\begin{algorithm}[H]
				\caption{}\label{Outer DWR Algorithm}
				\begin{algorithmic}[1]
					\State Start with some initial guess $u_h^{1}$, $\ell=1$, and some choice of $c_z$, $c_u \in \mathbb{R}$.\label{Algorithm: Start}
					\State Solve the primal problem: Find $u_h^\ell\in U_h^\ell$ such that $\mathcal{A}(u_h^\ell)(v_h^\ell)=0$ for all $v_h^\ell \in V_h^\ell$, using some nonlinear solver.  \label{Algorithm: small Primal}
					\State Solve the adjoint problem: Find $z_h^\ell\in V_h^\ell$ such that $\mathcal{A}'(u_h^\ell)(z_h^\ell,v_h^\ell)= J'(u_h^\ell)(v_h^\ell)$ for all $v_h^\ell \in U_h^\ell$, using some linear solver.\label{Algorithm: small Adjoint}
					\State Compute the interpolations  $u_h^{\ell,(2)}=I_{u}^{(2)}u_h^\ell\in U_h^{\ell,(2)}$  and $z_h^{\ell,(2)}=I_{z}^{(2)}z_h^\ell\in V_h^{\ell,(2)}$.\label{Algorithm: Interpolations}
					\State Compute the error estimators $\tilde{\eta}_h^{(2)},$ $\eta_k,$ $\eta^{(2)}_\mathcal{R}$, $\tilde{\eta}_{\tilde{z}_h^{(2)}}$, $\tilde{\eta}_{\tilde{u}_h^{(2)}}$ using $\tilde u_h^{(2)}=z_h^{\ell,(2)}$, $\tilde z_h^{(2)}=z_h^{\ell,(2)}$,   $\tilde u=u_h^{\ell}$, and $\tilde z=z_h^{\ell}$. \label{Algorithm: Compute_Estimators}
					\If{$|\tilde{\eta}_{\tilde{z}_h^{(2)}}|> c_z|\tilde{\eta}_h^{(2)}-\eta_k+\eta^{(2)}_\mathcal{R}|$}\label{Algorithm: Ifz}
					\State Solve the adjoint problem on the enriched spaces: Find $z_h^{\ell,(2)}\in V_h^{\ell,(2)}$ such that $\mathcal{A}'(u_h^{\ell,(2)})(z_h^{\ell,(2)},v_h^{\ell,(2)})= J'(u_h^{\ell,(2)})(v_h^{\ell,(2)})$ for all $v_h^{\ell,(2)} \in U_h^{\ell,(2)}$, using some linear solver. \label{Algorithm: Big Adjoint}
					\State  Go to Step \ref{Algorithm: Compute_Estimators}.
					\EndIf
					\If{$|\tilde{\eta}_{\tilde{u}_h^{(2)}}| > c_u|\tilde{\eta}_h^{(2)}-\eta_k+\eta^{(2)}_\mathcal{R}|$}\label{Algorithm: Ifu}
					\State  Solve the primal problem on the enriched spaces:  Find $u_h^{\ell,(2)}\in U_h^{\ell,(2)}$ such that $\mathcal{A}(u_h^{\ell,(2)})(v_h^{\ell,(2)})=~0$ for all $v_h^{\ell,(2)} \in V_h^{\ell,(2)}$, using some nonlinear solver. \label{Algorithm: Big Primal}
					\State  Go to Step \ref{Algorithm: Compute_Estimators}.
					\EndIf
					\If {$|\tilde{\eta}_h^{(2)}-\eta_k+\eta^{(2)}_\mathcal{R}|+|\tilde{\eta}_{\tilde{z}_h^{(2)}}|+|\tilde{\eta}_{\tilde{u}_h^{(2)}}|< TOL $}\label{Algorithm:  stopping Criteria}
					\State Algorithm terminates with final 
					\textbf{output} $J(u_h^\ell)$.
					\EndIf
					\State  Localize error estimator $\tilde{\eta_h^{(2)}}$, and $\eta^{(2)}_\mathcal{R}$ and mark elements. \label{Algorithm: Localization and marking}
					\State  Refine marked elements:$ \mathcal{T}_h^\ell \mapsto \mathcal{T}_h^{\ell+1}$, $\ell=\ell+1$. \label{Algorithm: Refinement}
					\State Go to Step~\ref{Algorithm: small Primal}
				\end{algorithmic}
			\end{algorithm}
			
			\begin{remark}
				We use the same interpolations as discussed in \cite{BeRa01,BaRa03}. For further information, we refer the reader to {\cite{BaRa03}; see pp. 43-44.}
			\end{remark}
			
			\begin{remark}
				In Step \ref{Algorithm: small Primal}, we use a Newton method with adaptive stopping rule using the estimator part $\eta_k$. The initial guess for the  Newton method was the solution on the previous grid. For further information about this Newton method we refer to \cite{EnLaWi20}. 
				The arising linear systems were solved {by means of} the direct solver UMFPACK \cite{UMFPACK}. 
				However,  iterative solvers could also be used, where the ideas from \cite{RanVi2013,RaWeWo10} can be exploited.
			\end{remark}
			\begin{remark}
				One can also use $|\tilde{\eta}_h|$ instead of $|\tilde{\eta}_h-\eta_k+\eta^{(2)}_\mathcal{R}|$.
				Indeed, in \cite{EnLaWi20}, it was observed that $|\eta^{(2)}_\mathcal{R}|$ is of higher-order, 
				and $\eta_k$ can be controlled by the choice of the accuracy of the solver. 
				Furthermore, it is sufficient to use the localized ${\tilde\eta_h^{(2)}}$ instead of ${\tilde\eta_h^{(2)}}$ and $\eta^{(2)}_\mathcal{R}$.
			\end{remark}
			\begin{remark}
				It is not required that the problems in Step \ref{Algorithm: small Primal} and Step \ref{Algorithm: small Adjoint} are solved accurate. 
				An estimate for  this error is $\eta_k$, which is perturbed by higher order terms.
			\end{remark}
			\begin{remark}
				In Step \ref{Algorithm: small Adjoint}, we use a Newton method with an adaptive stopping role 
				that is based on the estimator part $\eta_k$. 
				We take the solution from the previous grid as initial guess for the Newton iteration.
				We refer the reader to \cite{EnLaWi20} for further information about this Newton method.
			\end{remark}
			\begin{remark}
				If the problems in Step \ref{Algorithm: Big Primal} and Step \ref{Algorithm: Big Adjoint} are solved exactly, then $\tilde{\eta}_{\tilde{z}_h^{(2)}}=\tilde{\eta}_{\tilde{u}_h^{(2)}}=0$.  Therefore, the `if' conditions in Step \ref{Algorithm: Ifz} and \ref{Algorithm: Ifu} are false.
			\end{remark}
			\begin{remark}
				The localization and marking techniques in Step~\ref{Algorithm: Localization and marking} 
				coincide with those presented in \cite{EnLaWi20}.
				For more information on the localization, we refer to \cite{RiWi15_dwr}.
			\end{remark}
			\begin{remark}
				In Step \ref{Algorithm: Ifz} and \ref{Algorithm: Ifu}, we used the constants $c_u=c_z=0.5$. 
				In general, 
				one should choose these constants from the interval $(0,0.5]$.
			\end{remark}	
			\begin{remark}
				For the choices $c_u<0$ and $c_z<0$, the resulting algorithm coincides with the algorithm presented in \cite{EnLaWi20}. 
				Here the enriched problem needs to be solved at each level without 
				any interpolations.
				On the other hand, if we choose  $c_u=c_z=\infty$, then we never solve the enriched problem, and always use interpolations. 
				This leads to a similar approach as in \cite{RanVi2013}.
			\end{remark}

			\section{Numerical examples}
			\label{Section: Numerical examples}
			In this section, we discuss three different problems. 
			We also vary the goal functionals.
			More precisely, the first example deals with the Poisson equation
			and the average of the solution over the computational domain $\Omega$
			as simple linear model problem and quantity of interest, respectively.
			In the second test, we use a regularized $p$-Laplace equation, 
			and in the third example, we consider a stationary Navier-Stokes benchmark problem.
			The programming code is based on the finite element library deal.II 
			\cite{dealII90}.

			For the first two examples, we use continuous  bi-linear ($Q_1^c$) finite elements  for $V_h=U_h$, 
			and continuous bi-quadratic ($Q_2^c$)  finite elements for $V_h^{(2)}=U_h^{(2)}$ 
			in sense of Ciarlet \cite{Ciarlet:2002:FEM:581834}.  
			In the final example, we use the same configuration as in \cite{EnLaWi20}, i.e.,
			the finite element spaces $V_h=U_h$ and $V_h^{(2)}=U_h^{(2)}$ are based on 
			$ \left[Q_2^c\right]^2 \times Q_1^c$ and $\left[Q_4^c\right]^2 \times Q_2^c$ 
			finite elements, respectively.

			We use the following abbreviations for the error estimators used 
			in Algorithm \ref{Outer DWR Algorithm}:
			\textit{\textit{new}:} $c_u=c_z=0.5$,
			\textit{\textit{full}:}  $c_u=c_z=-1$,
			and \textit{\textit{int}:} $c_u=c_z=\infty$ (=$10^{100}$) in the numerical experiments.
			%
			%
			The choice $c_u=c_z=-1$ means that
			we always solve the primal and adjoint problems. 
			Therefore, for this case, the algorithm coincides with the algorithm presented in \cite{EnLaWi20} 
			(up to the starting point of the Newton iteration).  
			If  we have $c_u=c_z=\infty$, then this results in the case where we always use higher-order interpolation to the approximate $u-u_h$ and $z-z_h$ as done in \cite{BeRa01}.

\subsection{Poisson equation}
		
In the first example, we consider the Poisson equation  on the unit square $\Omega = (0,1)^2$. The problem formally reads as: Find $u \in H^1(\Omega)$ such that $-\Delta u = 1$ in $\Omega$ and  $u=0$ on $\partial \Omega$.
The exact solution is given by 
		\begin{align*}
			u(x,y)=\left(\frac{2}{\pi}\right)^4 \sum_{k=0}^{\infty}\sum_{l=0}^{\infty}\frac{sin\big((2k+1)\pi x\big)sin\big((2l+1)\pi y\big)}{(2k+1)(2l+1)\big((2k+1)^2+(2l+1)^2\big)}.
			\end{align*}
			The quantity of interest is given by 	$J(u)=\int_{\Omega} u \text{d}x $. The evaluation at the solution yields
					\begin{align*}
						J(u)=&\left(\frac{2}{\pi}\right)^6 \sum_{k=0}^{\infty}\sum_{l=0}^{\infty}\frac{1}{(2k+1)^2(2l+1)^2\big((2k+1)^2+(2l+1)^2\big)}\\
						=& \frac{1}{12} - \frac{31}{2\pi} \zeta(5) +\left(\frac{2}{\pi}\right)^5 \sum_{k=0}^{\infty}\frac{1}{(2k+1)^5\big(e^{(2k+1)\pi}+1\big)} \approx 0.03514425373878841,
					\end{align*}
		where $\zeta$ is the Riemann zeta function. 
				\begin{figure}[H]
					\centering			
					\ifMAKEPICS
					\begin{gnuplot}[terminal=epslatex]
						set output "Figures/Example1aa_archive.tex"
						set key bottom right
						set key opaque
						set datafile separator "|"
						set grid ytics lc rgb "#bbbbbb" lw 1 lt 0
						set grid xtics lc rgb "#bbbbbb" lw 1 lt 0
						set xlabel '\text{$\ell$}'
						plot  '< sqlite3 Data/Laplace/Mean/New/data.db  "SELECT DISTINCT Refinementstep+1, Ieff from data WHERE Refinementstep <= 24 "' u 1:2 w  lp lw 5 title ' \footnotesize $I_{eff,h}$(\textit{new})',\
						'< sqlite3 Data/Laplace/Mean/Full/data.db  "SELECT DISTINCT Refinementstep+1, Ieff  from data WHERE Refinementstep <= 24 "' u 1:2 w  lp lw 3 title ' \footnotesize $I_{eff,h}$ (\textit{full})',\
						'< sqlite3 Data/Laplace/Mean/Interpolation/data.db  "SELECT DISTINCT Refinementstep+1, Ieff  from data WHERE Refinementstep <= 24 "' u 1:2 w  lp lw 3 title ' \footnotesize $I_{eff,h}$ (\textit{int})',\
						1 lw 2
						#plot  '< sqlite3 Data/P_Laplace_slit/New/data.db "SELECT DISTINCT DOFS_primal, Exact_Error from data "' u 1:2 w  lp lw 3 title ' \small $|J(u)-J(u_h)|$ (a)',\
						'< sqlite3 Data/P_Laplace_slit/Full/data.db "SELECT DISTINCT DOFS_primal, Exact_Error from data_global "' u 1:2 w  lp lw 2 title ' \small $|J(u)-J(u_h)|$ (u)',\
						'< sqlite3 Data/P_Laplace_slit/New/data.db "SELECT DISTINCT DOFS_primal, Estimated_Error_remainder from data "' u 1:2 w  lp lw 2 title '$\small|\eta^{(2)}_\mathcal{R}|$',\
						'< sqlite3 Data/P_Laplace_slit/New/data.db "SELECT DISTINCT DOFS_primal, abs(ErrorTotalEstimation) from data "' u 1:2 w  lp lw 2 title '$\small\eta^{(2)}$',\
						'< sqlite3 Data/P_Laplace_slit/New/data.db "SELECT DISTINCT DOFS_primal, 0.5*abs(Estimated_Error_adjoint+Estimated_Error_primal) from data "' u 1:2 w  lp lw 2 title '$\small\eta^{(2)}_h$',\
						1/x  dt 3 lw  4
						#0.1/sqrt(x)  lw 4,\
						0.1/(x*sqrt(x))  lw 4,
						# '< sqlite3 dataSingle.db "SELECT DISTINCT DOFS_primal, Exact_Error from data WHERE DOFS_primal <= 90000"' u 1:2 w lp title 'Exact Error',\
						'< sqlite3 dataSingle.db "SELECT DISTINCT  DOFS_primal, Estimated_Error from data"' u 1:2 w lp title 'Estimated Error',\
						'< sqlite3 dataSingle.db "SELECT DISTINCT  DOFS_primal, Estimated_Error_primal from data"' u 1:2 w lp title 'Estimated Error(primal)',\
						'< sqlite3 dataSingle.db "SELECT DISTINCT  DOFS_primal, Estimated_Error_adjoint from data"' u 1:2 w lp title 'Estimated(adjoint)',\
						'< sqlite3 Data/P_Laplace_slit/New/data.db "SELECT DISTINCT DOFS_primal, abs(ErrorTotalEstimation)+abs(abs(\"Juh2-Juh\") + Exact_Error) from data "' u 1:2 w  lp lw 2 title '$\small\eta^{(2)}$',\
						'< sqlite3 Data/P_Laplace_slit/New/data.db "SELECT DISTINCT DOFS_primal, abs(ErrorTotalEstimation)-abs(\"Juh2-Juh\"+ abs(Exact_Error)) from data "' u 1:2 w  lp lw 2 title '$\small\eta^{(2)}$',\
					\end{gnuplot}
					\fi
					\scalebox{1.0}{\input{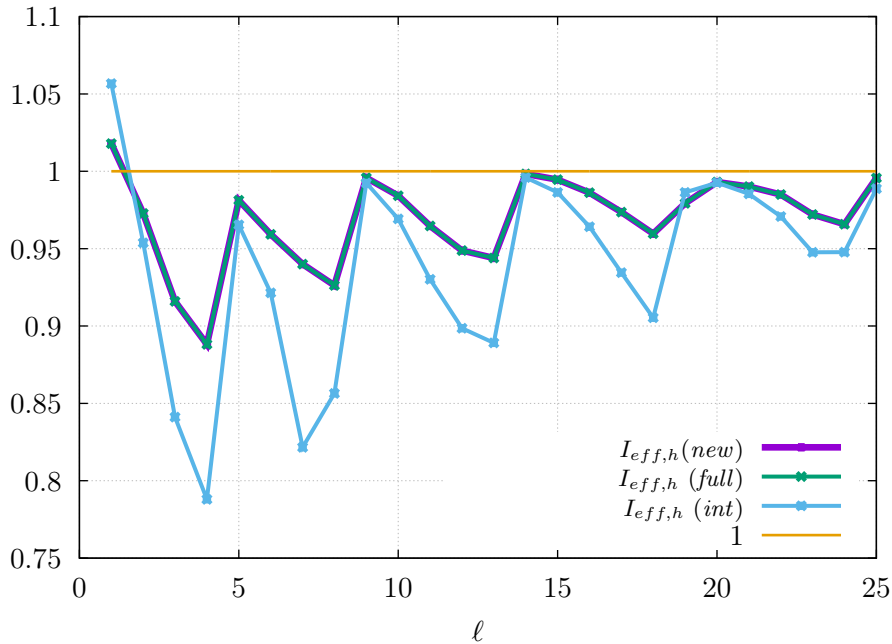}}
					\captionof{figure}{Effectivity indices for the Poisson equation. }
					\label{Figure: LaplaceIeff}
				\end{figure}
When
we compare the errors
in our quantity of interest for \textit{\textit{new}}, \textit{\textit{full}}, \textit{\textit{int}}, 
we observe that, for all three choices, we obtain almost the same error in comparison to the degrees of freedom (DOFs);
cf. Figure \ref{Figure: LaplaceErrors}.
Furthermore, we 
see from
Figure~\ref{Figure: Laplacesolves} that Algorithm~\ref{Outer DWR Algorithm} always decides to solve the primal problem on the enriched space on each level. However, the adjoint problem is never solved on the enriched space. 
If we compare the effectivity indices shown in Figure~\ref{Figure: LaplaceIeff}, then we observe that, for  \textit{\textit{new}} and \textit{\textit{full}}, the effectivity indices almost coincide. 
If we only use interpolation, then the result is slightly worse.
\begin{figure}[H] 
									\centering			
									\ifMAKEPICS
									\begin{gnuplot}[terminal=epslatex]
										set output "Figures/Example1aaa_archive.tex"
										set key bottom left
										set logscale 
										set key opaque
										set datafile separator "|"
										set grid ytics lc rgb "#bbbbbb" lw 1 lt 0
										set grid xtics lc rgb "#bbbbbb" lw 1 lt 0
										set xlabel '\text{$\ell$}'
										plot  '< sqlite3 Data/Laplace/Mean/New/data.db  "SELECT DISTINCT DOFS_primal, Exact_Error from data WHERE Refinementstep <= 24 "' u 1:2 w  lp lw 5 title ' \footnotesize Error (\textit{new})',\
										'< sqlite3 Data/Laplace/Mean/Full/data.db  "SELECT DISTINCT DOFS_primal, Exact_Error  from data WHERE Refinementstep <= 24 "' u 1:2 w  lp lw 3 title ' \footnotesize Error (\textit{full})',\
										'< sqlite3 Data/Laplace/Mean/Interpolation/data.db  "SELECT DISTINCT DOFS_primal, Exact_Error  from data WHERE Refinementstep <= 24 "' u 1:2 w  lp lw 3 title ' \footnotesize Error (\textit{int})',\
										#plot  '< sqlite3 Data/P_Laplace_slit/New/data.db "SELECT DISTINCT DOFS_primal, Exact_Error from data "' u 1:2 w  lp lw 3 title ' \small $|J(u)-J(u_h)|$ (a)',\
										'< sqlite3 Data/P_Laplace_slit/Full/data.db "SELECT DISTINCT DOFS_primal, Exact_Error from data_global "' u 1:2 w  lp lw 2 title ' \small $|J(u)-J(u_h)|$ (u)',\
										'< sqlite3 Data/P_Laplace_slit/New/data.db "SELECT DISTINCT DOFS_primal, Estimated_Error_remainder from data "' u 1:2 w  lp lw 2 title '$\small|\eta^{(2)}_\mathcal{R}|$',\
										'< sqlite3 Data/P_Laplace_slit/New/data.db "SELECT DISTINCT DOFS_primal, abs(ErrorTotalEstimation) from data "' u 1:2 w  lp lw 2 title '$\small\eta^{(2)}$',\
										'< sqlite3 Data/P_Laplace_slit/New/data.db "SELECT DISTINCT DOFS_primal, 0.5*abs(Estimated_Error_adjoint+Estimated_Error_primal) from data "' u 1:2 w  lp lw 2 title '$\small\eta^{(2)}_h$',\
										1/x  dt 3 lw  4
										#0.1/sqrt(x)  lw 4,\
										0.1/(x*sqrt(x))  lw 4,
										# '< sqlite3 dataSingle.db "SELECT DISTINCT DOFS_primal, Exact_Error from data WHERE DOFS_primal <= 90000"' u 1:2 w lp title 'Exact Error',\
										'< sqlite3 dataSingle.db "SELECT DISTINCT  DOFS_primal, Estimated_Error from data"' u 1:2 w lp title 'Estimated Error',\
										'< sqlite3 dataSingle.db "SELECT DISTINCT  DOFS_primal, Estimated_Error_primal from data"' u 1:2 w lp title 'Estimated Error(primal)',\
										'< sqlite3 dataSingle.db "SELECT DISTINCT  DOFS_primal, Estimated_Error_adjoint from data"' u 1:2 w lp title 'Estimated(adjoint)',\
										'< sqlite3 Data/P_Laplace_slit/New/data.db "SELECT DISTINCT DOFS_primal, abs(ErrorTotalEstimation)+abs(abs(\"Juh2-Juh\") + Exact_Error) from data "' u 1:2 w  lp lw 2 title '$\small\eta^{(2)}$',\
										'< sqlite3 Data/P_Laplace_slit/New/data.db "SELECT DISTINCT DOFS_primal, abs(ErrorTotalEstimation)-abs(\"Juh2-Juh\"+ abs(Exact_Error)) from data "' u 1:2 w  lp lw 2 title '$\small\eta^{(2)}$',\
									\end{gnuplot}
									\fi
									\scalebox{1.0}{\input{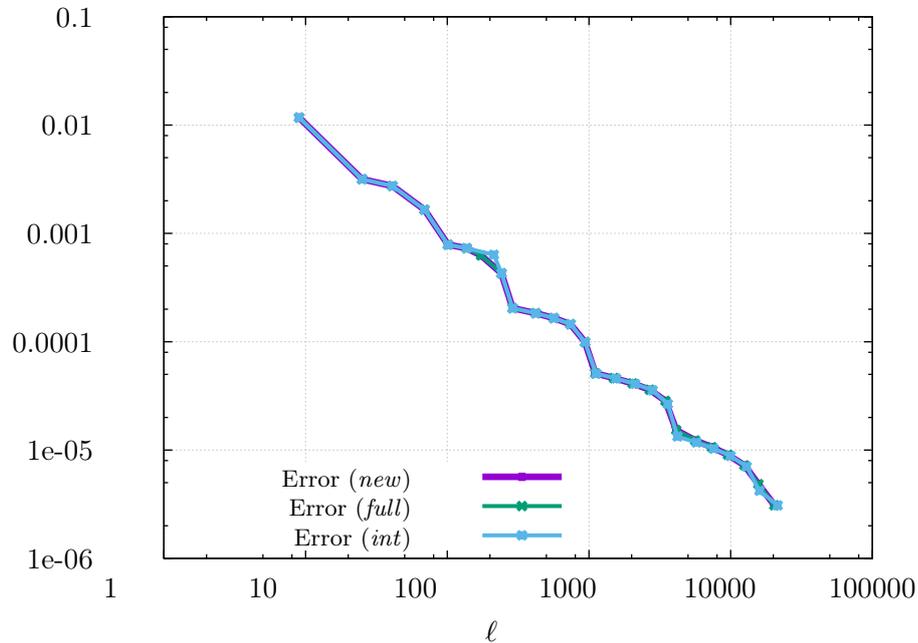}}
									\captionof{figure}{ Error vs DOFs for the Poisson equation.}\label{Figure: LaplaceErrors}
								\end{figure}

\begin{figure}[H]
				\centering			
				\ifMAKEPICS
				\begin{gnuplot}[terminal=epslatex]
					set output "Figures/Example1meansolves_archive.tex"
					set key center
					set key opaque
					set datafile separator "|"
					set yrange [-0.05:1.05]
					set grid ytics lc rgb "#bbbbbb" lw 1 lt 0
					set grid xtics lc rgb "#bbbbbb" lw 1 lt 0
					set xlabel '\text{$\ell$}'
					plot  '< sqlite3 Data/Laplace/Mean/New/data.db  "SELECT DISTINCT Refinementstep, (1.0*enrichedsolvesz)/(Refinementstep+1) from data "' u 1:2 w  lp lw 3 title ' \footnotesize $\mathfrak{z }_\ell$/$\ell$',\
					'< sqlite3 Data/Laplace/Mean/New/data.db  "SELECT DISTINCT Refinementstep, (1.0*enrichedsolvesu)/(Refinementstep+1) from data "' u 1:2 w  lp lw 3 title ' \footnotesize $\mathfrak{u }_\ell$/$\ell$',\
					#plot  '< sqlite3 Data/P_Laplace_slit/New/data.db "SELECT DISTINCT DOFS_primal, Exact_Error from data "' u 1:2 w  lp lw 3 title ' \small $|J(u)-J(u_h)|$ (a)',\
					'< sqlite3 Data/P_Laplace_slit/Full/data.db "SELECT DISTINCT DOFS_primal, Exact_Error from data_global "' u 1:2 w  lp lw 2 title ' \small $|J(u)-J(u_h)|$ (u)',\
					'< sqlite3 Data/P_Laplace_slit/New/data.db "SELECT DISTINCT DOFS_primal, Estimated_Error_remainder from data "' u 1:2 w  lp lw 2 title '$\small|\eta^{(2)}_\mathcal{R}|$',\
					'< sqlite3 Data/P_Laplace_slit/New/data.db "SELECT DISTINCT DOFS_primal, abs(ErrorTotalEstimation) from data "' u 1:2 w  lp lw 2 title '$\small\eta^{(2)}$',\
					'< sqlite3 Data/P_Laplace_slit/New/data.db "SELECT DISTINCT DOFS_primal, 0.5*abs(Estimated_Error_adjoint+Estimated_Error_primal) from data "' u 1:2 w  lp lw 2 title '$\small\eta^{(2)}_h$',\
					1/x  dt 3 lw  4
					#0.1/sqrt(x)  lw 4,\
					0.1/(x*sqrt(x))  lw 4,
					# '< sqlite3 dataSingle.db "SELECT DISTINCT DOFS_primal, Exact_Error from data WHERE DOFS_primal <= 90000"' u 1:2 w lp title 'Exact Error',\
					'< sqlite3 dataSingle.db "SELECT DISTINCT  DOFS_primal, Estimated_Error from data"' u 1:2 w lp title 'Estimated Error',\
					'< sqlite3 dataSingle.db "SELECT DISTINCT  DOFS_primal, Estimated_Error_primal from data"' u 1:2 w lp title 'Estimated Error(primal)',\
					'< sqlite3 dataSingle.db "SELECT DISTINCT  DOFS_primal, Estimated_Error_adjoint from data"' u 1:2 w lp title 'Estimated(adjoint)',\
					'< sqlite3 Data/P_Laplace_slit/New/data.db "SELECT DISTINCT DOFS_primal, abs(ErrorTotalEstimation)+abs(abs(\"Juh2-Juh\") + Exact_Error) from data "' u 1:2 w  lp lw 2 title '$\small\eta^{(2)}$',\
					'< sqlite3 Data/P_Laplace_slit/New/data.db "SELECT DISTINCT DOFS_primal, abs(ErrorTotalEstimation)-abs(\"Juh2-Juh\"+ abs(Exact_Error)) from data "' u 1:2 w  lp lw 2 title '$\small\eta^{(2)}$',\
				\end{gnuplot}
				\fi
				\scalebox{1.0}{\input{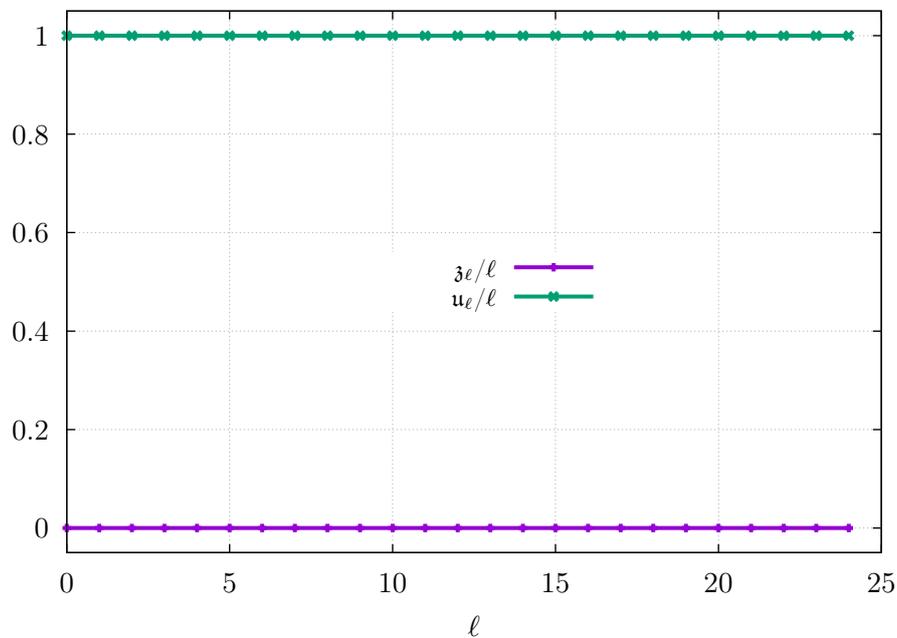}}
				\captionof{figure}{ Poisson equation: ratio of solves up to level $\ell$. Here, $\mathfrak{z }_\ell$ describes the number of enriched adjoint  solves needed up to level $\ell$,   and $\mathfrak{u }_\ell$ describes the number of enriched primal  solves needed up to level $\ell$}\label{Figure: Laplacesolves}
			\end{figure}

\subsection{Regularized p-Laplace equation}
In the second numerical example, we consider the regularized $p$-Laplace equation with  $\varepsilon =10^{-10}$ and $p=4$.  The computational domain $\Omega$ is a slit domain given by $\Omega = (-1,1)^2 \setminus \{0\} \times (-1,0)$ as visualized in Figure~\ref{Figure: Slit domain with BC}.
The given problem reads as: Find $u \in W^1_p(\Omega)$ such that 
\begin{equation*}
-\text{div}((|\nabla u |^2+ \varepsilon^2)^{\frac{p-2}{2}} \nabla u) =1 \; \text{in $\Omega$},
\quad \mbox{and}\;
u=0  \; \text{on $\Gamma_D$},
\quad
(|\nabla u |^2+ \varepsilon^2)^{\frac{p-2}{2}})\nabla u \cdot \vec{n}=0  \; \text{on $\Gamma_N$}.
\end{equation*}
The boundary conditions are visualized in the left subfigure of Figure~\ref{Figure: Slit domain with BC}. 
We impose Neumann 
and homogeneous Dirichlet boundary conditions 
on the left side and  on the right side of the slit, respectively.

In the right subfigure of Figure~\ref{Figure: Slit domain with BC}, a plot of the solution is given. Even for the $p=4$, similarities to the distance function, which is the first  eigenfunction of the $p$-Laplacian  for $p= \infty$,  described in \cite{KaHo2017a,FaBo2016a}, are visible. 
\begin{figure}[H]
\scalebox{1.0}{
\definecolor{uuuuuu}{rgb}{0.26666666666666666,0.26666666666666666,0.26666666666666666}
\definecolor{Farbe2}{rgb}{0.1,0,0.7}
\definecolor{Farbe1}{rgb}{0.1,0.8,0.1}
\definecolor{Farbe3}{rgb}{0.8,0.0,1}
\definecolor{Farbe4}{rgb}{1.0,0.3,0.3}
\definecolor{Farbe5}{rgb}{0.8,0.4,0.4}
\begin{tikzpicture}[line cap=round,line join=round,>=triangle 45,x=3cm,y=3cm]

\clip(-1.2,-1.2) rectangle (1.2,1.2);
\fill[line width=2pt,color=Farbe2,fill=Farbe3,fill opacity=0.10000000149011612] (1,-1) -- (1,1) -- (-1,1) -- (-1,-1) -- cycle;
\draw [line width=2pt,color=Farbe2] (1,-1)-- (1,1);
\draw [line width=2pt,color=Farbe2] (1,1)-- (-1,1);
\draw [line width=2pt,color=Farbe2] (-1,1)-- (-1,-1);
\draw [line width=2pt,color=Farbe2] (-1,-1)-- (1,-1);
\draw [line width=2pt,color=Farbe1] (-0.01,0)-- (-0.01,-1);
\draw [line width=2pt,color=Farbe2] (0.01,0)-- (0.01,-1);

\begin{scriptsize}
\draw [fill=uuuuuu] (0,-1) circle (2pt);
\draw [fill=Farbe4] (-0.9,-0.9) circle (2pt);
\draw[color=Farbe4](-0.8,-0.9)node {$x_P$};
\draw[color=uuuuuu] (0.0,-1.1) node {$(0,-1)$};
\draw[color=uuuuuu] (-1.0,-1.1) node {$(-1,-1)$};
\draw[color=uuuuuu] (1.0,-1.1) node {$(1,-1)$};
\draw[color=uuuuuu] (-1.0,1.1) node {$(-1,1)$};
\draw[color=uuuuuu] (1.0,1.1) node {$(1,1)$};
\draw [fill=uuuuuu] (0,0) circle (2pt);
\draw[color=uuuuuu] (0.0,0.06) node {(0,0)};
\draw[color=Farbe2] (0.1,-0.5) node {$\Gamma_D$};
\draw[color=Farbe1] (-0.1,-0.5)node {$\Gamma_N$};
\draw[color=Farbe3] (-0.0,0.5)node {$\text{\Large$\Omega$}$};
\end{scriptsize}
\end{tikzpicture}
}
{\vspace{-0cm}
\includegraphics[width=0.5\linewidth]{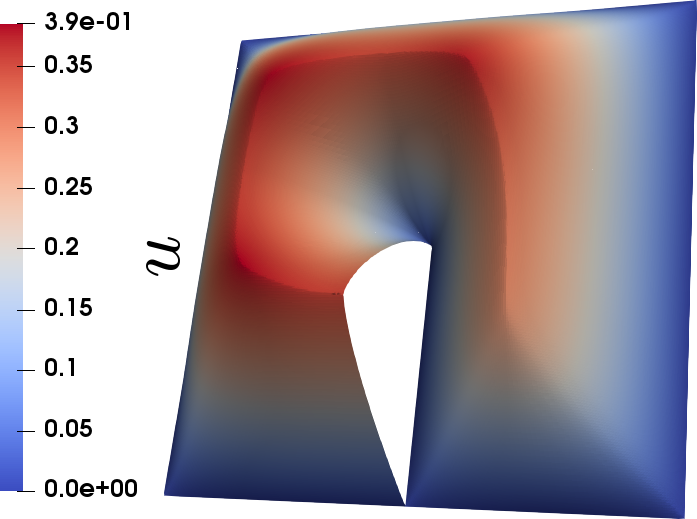}
}

\caption{The domain $\Omega$ with boundary conditions (left) and the solution of the problem (right). \label{Figure: Slit domain with BC}}
\end{figure}

\subsubsection{Integral evaluation} 
As 
first quantity of interest,  we again consider $J(u) = \int_{\Omega} u(x) dx \approx 0.71755$.
We
observe in Figure~\ref{Figure: Meal_PLaplace_Errors} that we obtain a similar error for either solving the adjoint and primal problem each time (Error(\textit{full})), for using the interpolation on each level (Error(\textit{int})), and for Algorithm~\ref{Outer DWR Algorithm} (Error(\textit{new})). 

As already noticed in \cite{EnLaWi20}, we observe higher-order convergence of the remainder term. The rate is approximately in the order of $\mathcal{O}(\text{DOFs}^{-\frac{3}{2}})$. For the errors and additionally the error estimator $\eta^{(2)}_h$, which is plotted for Algorithm~\ref{Outer DWR Algorithm}, the order of convergence is approximately $\mathcal{O}(\text{DOFs}^{-1})$.

In Figure~\ref{Figure: Meal_PLaplace_Solves}, the number of solves in the enriched space using  Algorithm~\ref{Outer DWR Algorithm} divided by the number of solves in the enriched space using 
the algorithm  given in \cite{EnLaWi20} is shown. 
We conclude that, on the first seven levels, the same solves as from \cite{EnLaWi20} are required. Then Algorithm~\ref{Outer DWR Algorithm} decides that we just have to solve either the primal or the adjoint problem on the enriched space, and use the interpolation in the other. After level $\ell =15$ only interpolation is used. 
Going back
to Figure~\ref{Figure: Meal_PLaplace_Errors}, we observe that, on the levels $\ell = 8-12$, the error for just using interpolation is slightly worse than 
for the other approaches. However, on finer levels this effect does not appear anymore. 
Excellent effectivity indices are observed as visualized in Figure~\ref{Figure: Meal_PLaplace_Ieff}.
For the full estimater, we observe almost no differences to the other  versions proposed.
In the case of Algorithm~\ref{Outer DWR Algorithm},
the efficiency index $I_{eff}$ is approximately equal to $1.25$ 
when using interpolation only.

If we compare the different meshes, which are visualized in Figure~\ref{Figure: Meal_PLaplace_Meshes}, 
then we 
detect
that, even after $28$ adaptive refinements, we end up in almost coinciding meshes.

\begin{figure}[H]
	\centering			
	\ifMAKEPICS
	\begin{gnuplot}[terminal=epslatex]
		set output "Figures/Example3_archive.tex"
		set key bottom left
		set key opaque
		set datafile separator "|"
		set logscale x
		set logscale y
		set xrange [10:100000]
		set yrange [0.8e-8:1]
		set grid ytics lc rgb "#bbbbbb" lw 1 lt 0
		set grid xtics lc rgb "#bbbbbb" lw 1 lt 0
		set xlabel '\text{DOFs}'
		plot  '< sqlite3 Data/P_Laplace_slit/New/data.db "SELECT DISTINCT DOFS_primal, Exact_Error from data "' u 1:2 w  lp lw 3 title ' \footnotesize Error (\textit{new})',\
		'< sqlite3 Data/P_Laplace_slit/Full/data.db "SELECT DISTINCT DOFS_primal, Exact_Error from data "' u 1:2 w  lp lw 2 title ' \footnotesize Error (\textit{full})',\
		'< sqlite3 Data/P_Laplace_slit/Interpolate/data.db "SELECT DISTINCT DOFS_primal, Exact_Error from data "' u 1:2 w  lp lw 2 title ' \footnotesize Error (\textit{int})',\
		'< sqlite3 Data/P_Laplace_slit/New/data.db "SELECT DISTINCT DOFS_primal, Estimated_Error_remainder from data "' u 1:2 w  lp lw 2 title '\scriptsize$|\eta^{(2)}_\mathcal{R}|$',\
		'< sqlite3 Data/P_Laplace_slit/New/data.db "SELECT DISTINCT DOFS_primal, 0.5*abs(Estimated_Error_adjoint+Estimated_Error_primal) from data "' u 1:2 w  lp lw 2 title '\scriptsize$|\eta^{(2)}_h|$',\
		1/x  dt 3 lw  4 title '\footnotesize$\mathcal{O}(\text{DOFs}^{-1})$',\
		1/(x**1.5)  dt 3 lw  4 title '\footnotesize$\mathcal{O}(\text{DOFs}^{-\frac{3}{2}})$'
		#plot  '< sqlite3 Data/P_Laplace_slit/New/data.db "SELECT DISTINCT DOFS_primal, Exact_Error from data "' u 1:2 w  lp lw 3 title ' \small $|J(u)-J(u_h)|$ (a)',\
		'< sqlite3 Data/P_Laplace_slit/Full/data.db "SELECT DISTINCT DOFS_primal, Exact_Error from data_global "' u 1:2 w  lp lw 2 title ' \small $|J(u)-J(u_h)|$ (u)',\
		'< sqlite3 Data/P_Laplace_slit/New/data.db "SELECT DISTINCT DOFS_primal, Estimated_Error_remainder from data "' u 1:2 w  lp lw 2 title '$\small|\eta^{(2)}_\mathcal{R}|$',\
		'< sqlite3 Data/P_Laplace_slit/New/data.db "SELECT DISTINCT DOFS_primal, abs(ErrorTotalEstimation) from data "' u 1:2 w  lp lw 2 title '$\small\eta^{(2)}$',\
		'< sqlite3 Data/P_Laplace_slit/New/data.db "SELECT DISTINCT DOFS_primal, 0.5*abs(Estimated_Error_adjoint+Estimated_Error_primal) from data "' u 1:2 w  lp lw 2 title '$\small\eta^{(2)}_h$',\
		1/x  dt 3 lw  4
		#0.1/sqrt(x)  lw 4,\
		0.1/(x*sqrt(x))  lw 4,
		# '< sqlite3 dataSingle.db "SELECT DISTINCT DOFS_primal, Exact_Error from data WHERE DOFS_primal <= 90000"' u 1:2 w lp title 'Exact Error',\
		'< sqlite3 dataSingle.db "SELECT DISTINCT  DOFS_primal, Estimated_Error from data"' u 1:2 w lp title 'Estimated Error',\
		'< sqlite3 dataSingle.db "SELECT DISTINCT  DOFS_primal, Estimated_Error_primal from data"' u 1:2 w lp title 'Estimated Error(primal)',\
		'< sqlite3 dataSingle.db "SELECT DISTINCT  DOFS_primal, Estimated_Error_adjoint from data"' u 1:2 w lp title 'Estimated(adjoint)',\
		'< sqlite3 Data/P_Laplace_slit/New/data.db "SELECT DISTINCT DOFS_primal, abs(ErrorTotalEstimation)+abs(abs(\"Juh2-Juh\") + Exact_Error) from data "' u 1:2 w  lp lw 2 title '$\small\eta^{(2)}$',\
		'< sqlite3 Data/P_Laplace_slit/New/data.db "SELECT DISTINCT DOFS_primal, abs(ErrorTotalEstimation)-abs(\"Juh2-Juh\"+ abs(Exact_Error)) from data "' u 1:2 w  lp lw 2 title '$\small\eta^{(2)}$',\
	\end{gnuplot}
	\fi
	\scalebox{1.0}{\input{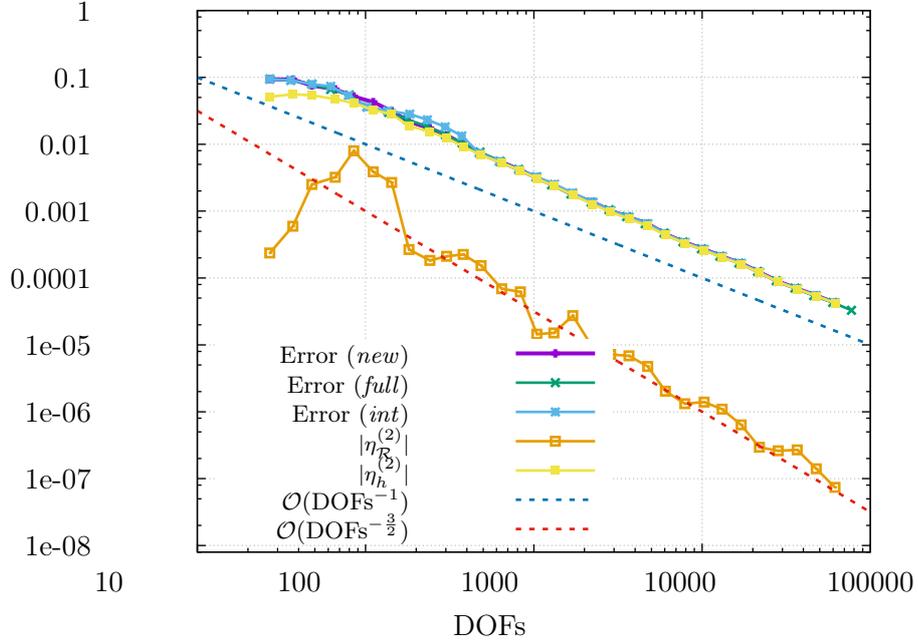}}
	\captionof{figure}{ $p$-Laplace for $p=4$, $\varepsilon=10^{-10}$: Integral evaluation: Error and error estimators  versus DOFs.}\label{Figure: Meal_PLaplace_Errors}
\end{figure}

\begin{figure}[H]
	\centering			
	\ifMAKEPICS
	\begin{gnuplot}[terminal=epslatex]
		set output "Figures/Example1_archive.tex"
		set key bottom left
		set key opaque
		set datafile separator "|"
		set yrange [0:1]
		set grid ytics lc rgb "#bbbbbb" lw 1 lt 0
		set grid xtics lc rgb "#bbbbbb" lw 1 lt 0
		set xlabel '\text{$\ell$}'
		plot  '< sqlite3 Data/P_Laplace_slit/New/data.db "SELECT DISTINCT Refinementstep, (1.0*enrichedsolvesz)/(Refinementstep+1) from data "' u 1:2 w  lp lw 3 title ' \footnotesize  $\mathfrak{z }_\ell$ /$\ell$',\
		 '< sqlite3 Data/P_Laplace_slit/New/data.db "SELECT DISTINCT Refinementstep, (1.0*enrichedsolvesu)/(Refinementstep+1) from data "' u 1:2 w  lp lw 3 title ' \footnotesize  $\mathfrak{u }_\ell$ /$\ell$',\
		#plot  '< sqlite3 Data/P_Laplace_slit/New/data.db "SELECT DISTINCT DOFS_primal, Exact_Error from data "' u 1:2 w  lp lw 3 title ' \small $|J(u)-J(u_h)|$ (a)',\
		'< sqlite3 Data/P_Laplace_slit/Full/data.db "SELECT DISTINCT DOFS_primal, Exact_Error from data_global "' u 1:2 w  lp lw 2 title ' \small $|J(u)-J(u_h)|$ (u)',\
		'< sqlite3 Data/P_Laplace_slit/New/data.db "SELECT DISTINCT DOFS_primal, Estimated_Error_remainder from data "' u 1:2 w  lp lw 2 title '$\small|\eta^{(2)}_\mathcal{R}|$',\
		'< sqlite3 Data/P_Laplace_slit/New/data.db "SELECT DISTINCT DOFS_primal, abs(ErrorTotalEstimation) from data "' u 1:2 w  lp lw 2 title '$\small\eta^{(2)}$',\
		'< sqlite3 Data/P_Laplace_slit/New/data.db "SELECT DISTINCT DOFS_primal, 0.5*abs(Estimated_Error_adjoint+Estimated_Error_primal) from data "' u 1:2 w  lp lw 2 title '$\small\eta^{(2)}_h$',\
		1/x  dt 3 lw  4
		#0.1/sqrt(x)  lw 4,\
		0.1/(x*sqrt(x))  lw 4,
		# '< sqlite3 dataSingle.db "SELECT DISTINCT DOFS_primal, Exact_Error from data WHERE DOFS_primal <= 90000"' u 1:2 w lp title 'Exact Error',\
		'< sqlite3 dataSingle.db "SELECT DISTINCT  DOFS_primal, Estimated_Error from data"' u 1:2 w lp title 'Estimated Error',\
		'< sqlite3 dataSingle.db "SELECT DISTINCT  DOFS_primal, Estimated_Error_primal from data"' u 1:2 w lp title 'Estimated Error(primal)',\
		'< sqlite3 dataSingle.db "SELECT DISTINCT  DOFS_primal, Estimated_Error_adjoint from data"' u 1:2 w lp title 'Estimated(adjoint)',\
		'< sqlite3 Data/P_Laplace_slit/New/data.db "SELECT DISTINCT DOFS_primal, abs(ErrorTotalEstimation)+abs(abs(\"Juh2-Juh\") + Exact_Error) from data "' u 1:2 w  lp lw 2 title '$\small\eta^{(2)}$',\
		'< sqlite3 Data/P_Laplace_slit/New/data.db "SELECT DISTINCT DOFS_primal, abs(ErrorTotalEstimation)-abs(\"Juh2-Juh\"+ abs(Exact_Error)) from data "' u 1:2 w  lp lw 2 title '$\small\eta^{(2)}$',\
	\end{gnuplot}
	\fi
	\scalebox{1.0}{\input{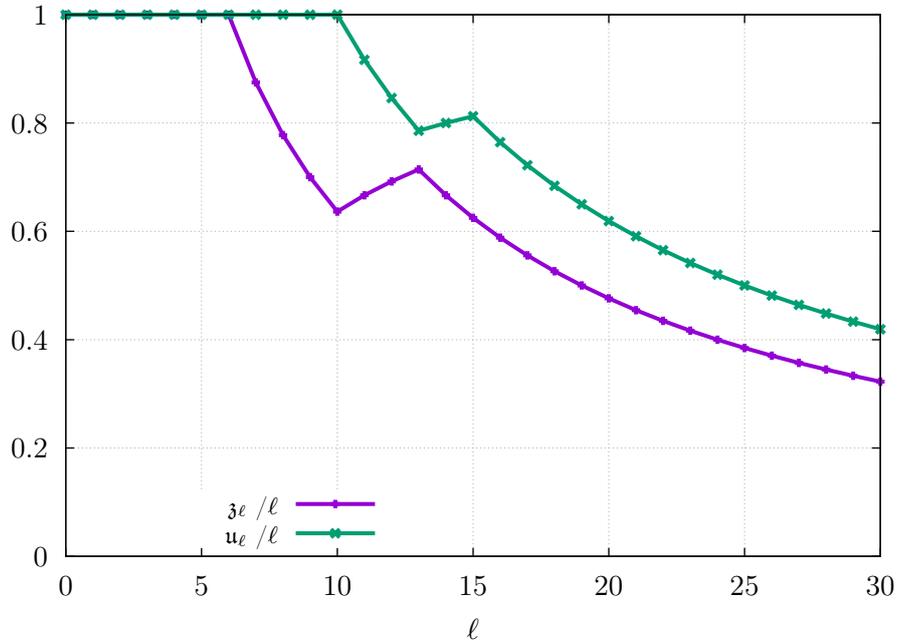}}
	\captionof{figure}{  $p$-Laplace for $p=4$, $\varepsilon=10^{-10}$: Integral evaluation:  ratio of solves up to level $\ell$. Here, $\mathfrak{z }_\ell$ describes the number of enriched adjoint  solves needed up to level $\ell$,   
	and $\mathfrak{u }_\ell$ describes the number of enriched primal  solves needed up to level $\ell$.}\label{Figure: Meal_PLaplace_Solves}
\end{figure}
\begin{figure}[H]
	\centering			
	\ifMAKEPICS
	\begin{gnuplot}[terminal=epslatex]
		set output "Figures/Example1b_archive.tex"
		set key bottom 
		set key opaque
		set datafile separator "|"
		set yrange [0:2]
		set grid ytics lc rgb "#bbbbbb" lw 1 lt 0
		set grid xtics lc rgb "#bbbbbb" lw 1 lt 0
		set xlabel '\text{$\ell$}'
		plot  '< sqlite3 Data/P_Laplace_slit/New/data.db "SELECT DISTINCT Refinementstep, Ieff from data "' u 1:2 w  lp lw 3 title ' \footnotesize $I_{eff,h}$ (new)',\
		'< sqlite3 Data/P_Laplace_slit/Full/data.db "SELECT DISTINCT Refinementstep, Ieff from data "' u 1:2 w  lp lw 3 title ' \footnotesize $I_{eff,h}$ (\textit{full})',\
		'< sqlite3 Data/P_Laplace_slit/Interpolate/data.db "SELECT DISTINCT Refinementstep, Ieff from data "' u 1:2 w  lp lw 3 title ' \footnotesize $I_{eff,h}$ (\textit{int})',\
		  '< sqlite3 Data/P_Laplace_slit/New/data.db "SELECT DISTINCT Refinementstep, Iefftilde from data "' u 1:2 w  lp lw 3 title ' \footnotesize $I_{eff}$ (\textit{new})',\
		  '< sqlite3 Data/P_Laplace_slit/Full/data.db "SELECT DISTINCT Refinementstep, Iefftilde from data "' u 1:2 w  lp lw 3 title ' \footnotesize $I_{eff}$ (\textit{full})',\
		  '< sqlite3 Data/P_Laplace_slit/Interpolate/data.db "SELECT DISTINCT Refinementstep, Iefftilde from data "' u 1:2 w  lp lw 3 title ' \footnotesize $I_{eff}$ (\textit{int})',\
		#plot  '< sqlite3 Data/P_Laplace_slit/New/data.db "SELECT DISTINCT DOFS_primal, Exact_Error from data "' u 1:2 w  lp lw 3 title ' \small $|J(u)-J(u_h)|$ (a)',\
		'< sqlite3 Data/P_Laplace_slit/Full/data.db "SELECT DISTINCT DOFS_primal, Exact_Error from data_global "' u 1:2 w  lp lw 2 title ' \small $|J(u)-J(u_h)|$ (u)',\
		'< sqlite3 Data/P_Laplace_slit/New/data.db "SELECT DISTINCT DOFS_primal, Estimated_Error_remainder from data "' u 1:2 w  lp lw 2 title '$\small|\eta^{(2)}_\mathcal{R}|$',\
		'< sqlite3 Data/P_Laplace_slit/New/data.db "SELECT DISTINCT DOFS_primal, abs(ErrorTotalEstimation) from data "' u 1:2 w  lp lw 2 title '$\small\eta^{(2)}$',\
		'< sqlite3 Data/P_Laplace_slit/New/data.db "SELECT DISTINCT DOFS_primal, 0.5*abs(Estimated_Error_adjoint+Estimated_Error_primal) from data "' u 1:2 w  lp lw 2 title '$\small\eta^{(2)}_h$',\
		1/x  dt 3 lw  4
		#0.1/sqrt(x)  lw 4,\
		0.1/(x*sqrt(x))  lw 4,
		# '< sqlite3 dataSingle.db "SELECT DISTINCT DOFS_primal, Exact_Error from data WHERE DOFS_primal <= 90000"' u 1:2 w lp title 'Exact Error',\
		'< sqlite3 dataSingle.db "SELECT DISTINCT  DOFS_primal, Estimated_Error from data"' u 1:2 w lp title 'Estimated Error',\
		'< sqlite3 dataSingle.db "SELECT DISTINCT  DOFS_primal, Estimated_Error_primal from data"' u 1:2 w lp title 'Estimated Error(primal)',\
		'< sqlite3 dataSingle.db "SELECT DISTINCT  DOFS_primal, Estimated_Error_adjoint from data"' u 1:2 w lp title 'Estimated(adjoint)',\
		'< sqlite3 Data/P_Laplace_slit/New/data.db "SELECT DISTINCT DOFS_primal, abs(ErrorTotalEstimation)+abs(abs(\"Juh2-Juh\") + Exact_Error) from data "' u 1:2 w  lp lw 2 title '$\small\eta^{(2)}$',\
		'< sqlite3 Data/P_Laplace_slit/New/data.db "SELECT DISTINCT DOFS_primal, abs(ErrorTotalEstimation)-abs(\"Juh2-Juh\"+ abs(Exact_Error)) from data "' u 1:2 w  lp lw 2 title '$\small\eta^{(2)}$',\
	\end{gnuplot}
	\fi
	\scalebox{1.0}{\input{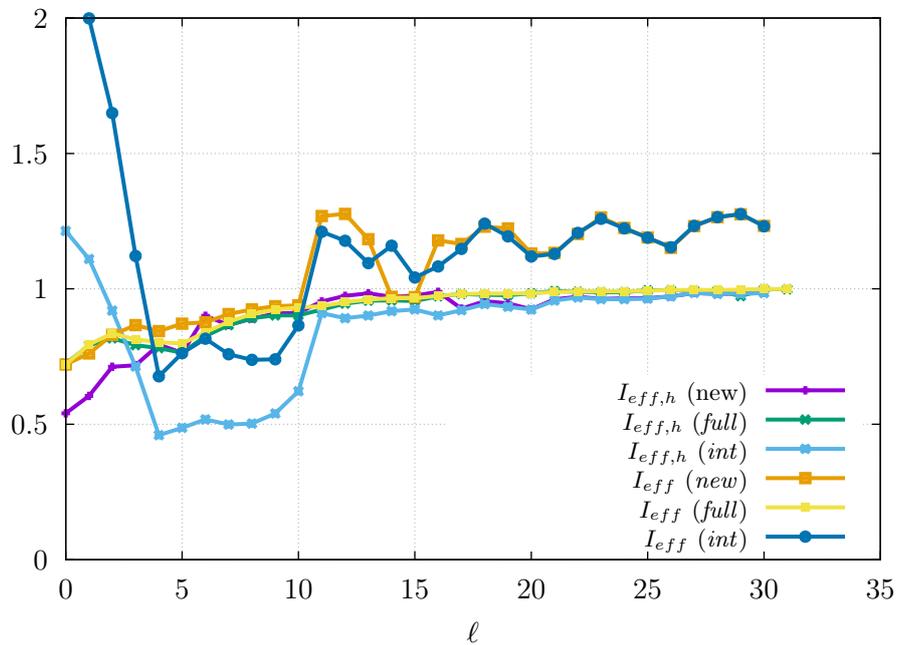}}
	\captionof{figure}{  $p$-Laplace for $p=4$, $\varepsilon=10^{-10}$: Integral evaluation: Effektivity idices.}\label{Figure: Meal_PLaplace_Ieff}
\end{figure}

\begin{figure}
\centering
\includegraphics[width=0.31\linewidth]{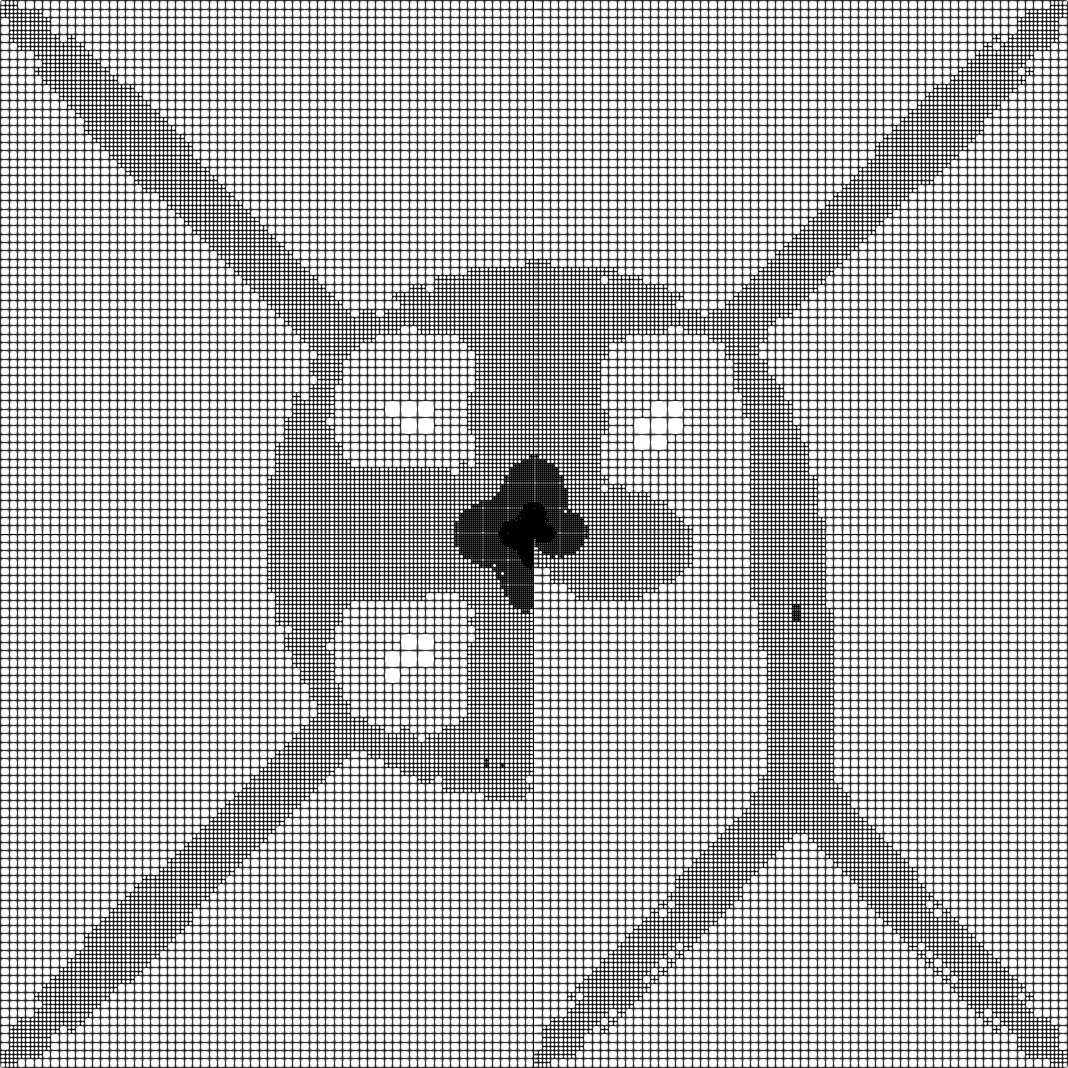}%
\hspace{0.035\linewidth}%
\includegraphics[width=0.31\linewidth]{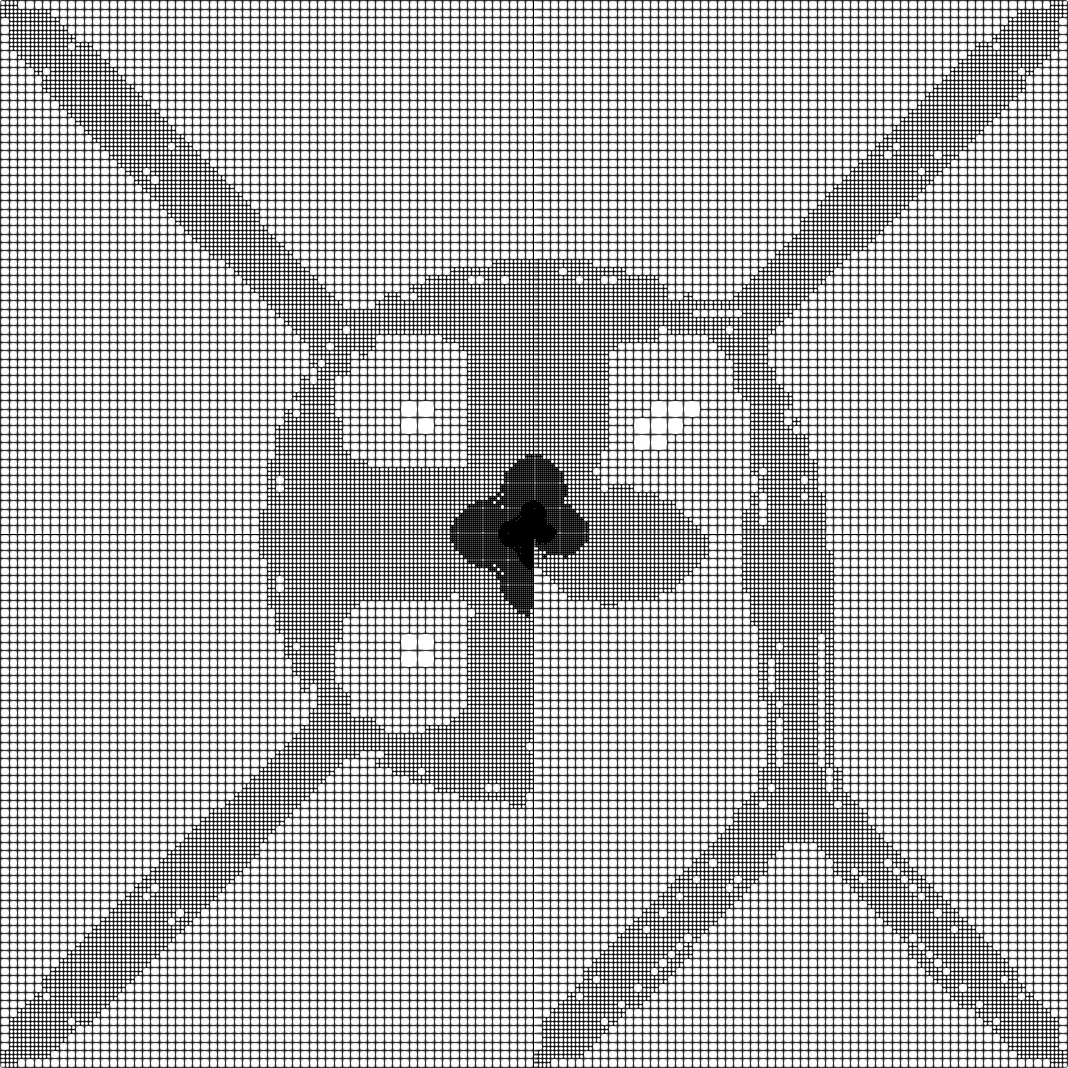}%
\hspace{0.035\linewidth}%
\includegraphics[width=0.31\linewidth]{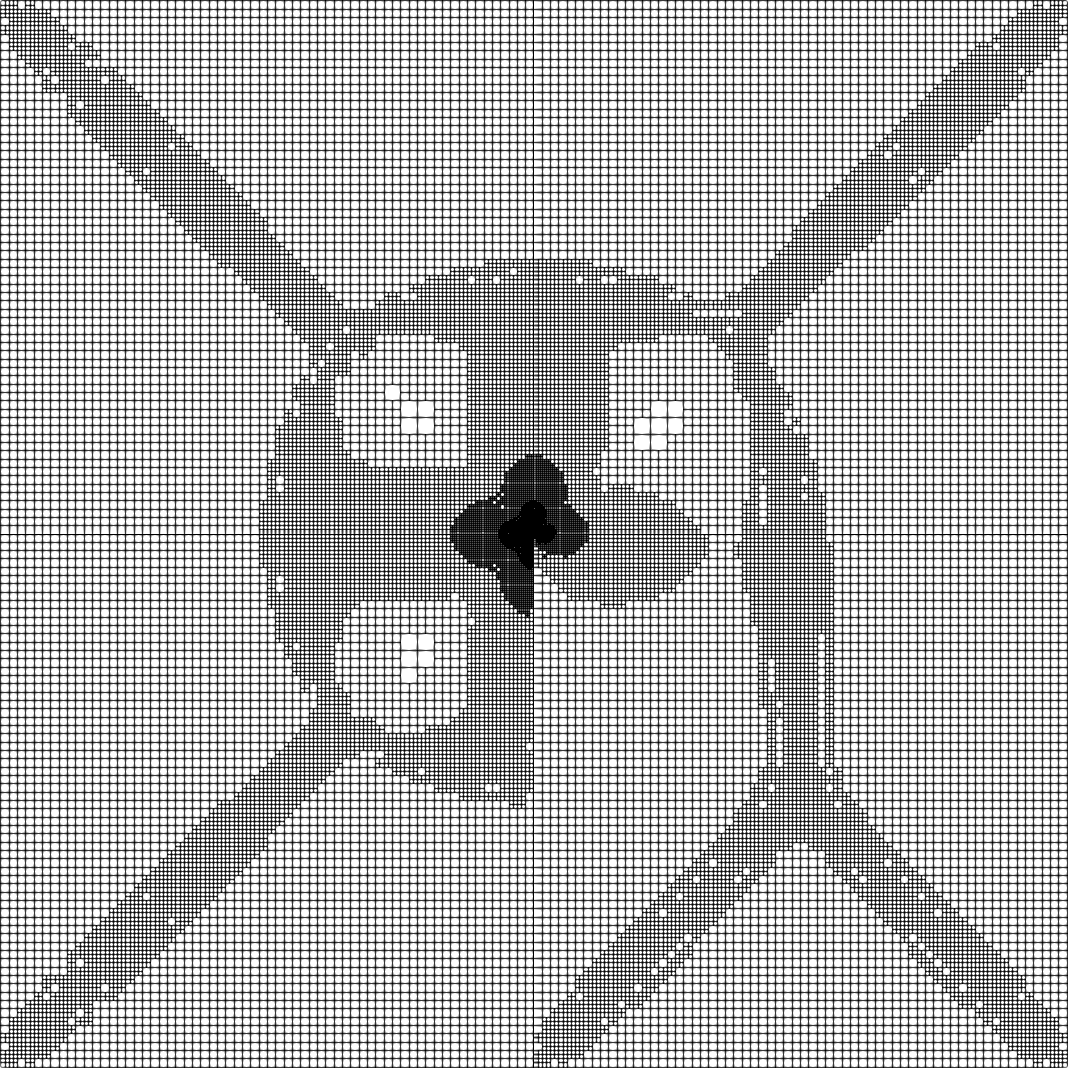}%
\caption{The resulting meshes on level $\ell=29$ for: solving always  the enriched spaces (left), using Algorithm~\ref{Outer DWR Algorithm} (middle), using interpolations on all levels (right).}
\label{Figure: Meal_PLaplace_Meshes}
\end{figure}

\subsubsection{Point evaluation} 
In this part, we consider the point evaluation $J(u) = u(x_P) \approx 0.04501097$ 
as quantity of interest,  where the point $x_P=-\frac{9}{10}(1,1)$ is 
also visualized in Figure~\ref{Figure: Slit domain with BC}.
Inspecting Table~\ref{Table: PLaplacePoint}, we observe that $\lim_{\ell \rightarrow \infty }I_{eff} =0$ 
due to the local refinement around the evaluation point as mentioned in Remark~\ref{Remark: pointevaluation}. 

Furthermore, 
Table~\ref{Table: PLaplacePoint}
shows that the effectivity indices  $I_{eff,h}$  and $I_{eff}$, defined in (\ref{Definition: Ieffs}), 
are both better for Algorithm~\ref{Outer DWR Algorithm} 
than
for using interpolation on every level. 
It is a bit surprising that the efficiency indices
$I_{eff,h}$ perform equally well.
Moreover, $\mathfrak{z }_\ell$   and  $\mathfrak{u}_\ell$ show that Algorithm~\ref{Outer DWR Algorithm} decides to solve the enriched problems on several levels.

In comparison with the algorithm proposed in our previous work \cite{EnLaWi20}, we save five times solving the primal problem on the enriched space, and 1 adjoint problem on the enriched space. 
In Figure~\ref{Figure: P-LaplacePointMesh24}, we observe that 
heavy refinement 
occurs
around our evaluation point. 

The position of the point was motivated by the singularity in the distance function, which is the first eigenfunction of the $p$-Laplacian  for $p= \infty$; see \cite{KaHo2017a,FaBo2016a}. 
	This singularity is also refined by our strategy, provided that it is sufficiently close to our point.
The errors in 
the
point evaluation are similar for interpolation and the new Algorithm~\ref{Outer DWR Algorithm}.

\begin{table}[H]
		\caption{$p$-Laplace for $p=4$, $\varepsilon=10^{-10}$: Point evaluation: In the first part of the table, the  levels are given. In the second part, we have the results for Algorithm \ref{Outer DWR Algorithm}, and in the third the results for using the interpolation on every level. $|V_h^{\ell}|$ gives the DOFs in the finite element space, and  $|V_h^{\ell,(2)}|$  gives the number of elements in the corresponding enriched space. The values $\mathfrak{z }_\ell$   and   $\mathfrak{u}_\ell$  describe the number of enriched solves required up to level $\ell$.  
			In the third part, $\mathfrak{z }_\ell$   and   \red{$\mathfrak{u}_\ell$}  are not 
			presented 
			since $\mathfrak{z }_\ell=\mathfrak{u}_\ell=0$.\label{Table: PLaplacePoint} }
	\scalebox{0.95}{
	\begin{tabular}{|l||r|r|l|l|l|l|l||r|r|l|l|l|}
		\hline
		&\multicolumn{7}{|c||}{\textit{new}}& \multicolumn{5}{|c|}{\textit{int}}\\ \hline
		$\ell$&  $|V_h^{\ell}|$  & $|V_h^{\ell,(2)}|$       & $I_{eff,h} $      &$I_{eff} $     &$\mathfrak{z }_\ell$   & $\mathfrak{u}_\ell$    & Error& $|V_h^{\ell}|$    & $|V_h^{\ell,(2)}|$      & $I_{eff,h}$    &$I_{eff} $ & Error      \\ \hline
		1  & 27    & 85    & 0.55  & 0.61  & 1  & 1  & 3.07E-02 & 27    & 85    & 0.18   & 0.29  & 3.07E-02 \\ \hline
		2  & 36    & 117   & 0.56  & 0.84  & 1  & 2  & 1.94E-02 & 36    & 118   & 0.48   & 0.78  & 1.95E-02 \\ \hline
		3  & 53    & 180   & 0.78  & 0.81  & 2  & 3  & 3.28E-03 & 53    & 180   & 1.12   & 1.41  & 3.28E-03 \\ \hline
		4  & 69    & 240   & 0.81  & 1.17  & 3  & 3  & 2.02E-03 & 69    & 240   & 0.55   & 1.17  & 2.02E-03 \\ \hline
		5  & 93    & 334   & 0.82  & 0.84  & 4  & 4  & 7.44E-04 & 93    & 334   & 0.85   & 1.21  & 7.44E-04 \\ \hline
		6  & 120   & 441   & 0.61  & 0.80  & 5  & 5  & 3.12E-04 & 123   & 453   & 1.75   & 0.66  & 3.25E-04 \\ \hline
		7  & 154   & 575   & 0.65  & 0.84  & 6  & 6  & 2.68E-04 & 167   & 626   & 1.33   & 0.45  & 1.55E-04 \\ \hline
		8  & 201   & 760   & 0.83  & 0.81  & 7  & 7  & 1.26E-04 & 215   & 815   & 0.58   & 0.45  & 1.55E-04 \\ \hline
		9  & 258   & 985   & 0.48  & 0.72  & 8  & 8  & 4.63E-05 & 275   & 1 051  & 0.78   & 0.02  & 1.09E-04 \\ \hline
		10 & 332   & 1 279  & 0.56  & 0.86  & 9  & 9  & 6.20E-05 & 356   & 1 370  & 0.69   & 0.03  & 6.28E-05 \\ \hline
		11 & 448   & 1 734  & 0.71  & 0.80  & 11 & 10 & 2.17E-05 & 468   & 1 810  & 0.78   & 0.06  & 2.86E-05 \\ \hline
		12 & 578   & 2 247  & 0.51  & 0.64  & 13 & 11 & 8.21E-06 & 602   & 2 334  & 0.61   & 0.02  & 3.04E-05 \\ \hline
		13 & 739   & 2 884  & 1.52  & 0.87  & 13 & 12 & 1.31E-05 & 790   & 3 082  & 0.71   & 0.04  & 1.59E-05 \\ \hline
		14 & 940   & 3 670  & 0.64  & 0.87  & 14 & 13 & 8.58E-06 & 1 014  & 3 973  & 1.18   & 0.07  & 8.74E-06 \\ \hline
		15 & 1 206  & 4 727  & 2.32  & 0.44  & 15 & 13 & 1.47E-06 & 1 300  & 5 109  & 0.54   & 0.12  & 5.60E-06 \\ \hline
		16 & 1 549  & 6 096  & 1.03  & 0.23  & 16 & 13 & 1.26E-06 & 1 701  & 6 696  & 1.76   & 0.17  & 1.45E-06 \\ \hline
		17 & 1 993  & 7 855  & 0.95  & 0.88  & 17 & 14 & 1.58E-06 & 2 196  & 8 658  & 2.52   & 0.09  & 1.58E-06 \\ \hline
		18 & 2 561  & 10 120 & 77.10 & 31.99 & 18 & 14 & 1.13E-08 & 2 831  & 11 181 & 1.52   & 0.09  & 1.60E-06 \\ \hline
		19 & 3 310  & 13 092 & 1.61  & 0.43  & 19 & 14 & 5.03E-07 & 3 642  & 14 419 & 3.94   & 0.41  & 4.01E-07 \\ \hline
		20 & 4 274  & 16 939 & 1.13  & 0.90  & 20 & 15 & 3.05E-07 & 4 728  & 18 734 & 922.58 & 54.95 & 1.16E-09 \\ \hline
		21 & 5 462  & 21 679 & 2.09  & 1.00  & 20 & 16 & 5.74E-07 & 6 089  & 24 164 & 2.98   & 0.03  & 2.72E-07 \\ \hline
		22 & 7 105  & 28 206 & 0.53  & 0.78 & 21 & 17 & 6.91E-08 & 7 913  & 31 433 & 3.16   & 0.02  & 2.01E-07 \\ \hline
		23 & 9 111  & 36 203 & 1.12  & 1.07  & 22 & 18 & 1.90E-07 & 10 237 & 40 698 & 2.93   & 0.02  & 1.57E-07 \\ \hline
		24 & 11 760 & 46 780 & 1.07  & 1.07  & 23 & 19 & 9.13E-08 & 13 214 & 52 569 & 2.08   & 0.05  & 1.53E-07 \\ \hline
		25 & 15 228 & 60 614 & 1.06  & 1.07  & 24 & 20 & 1.08E-07 & 17 122 & 68 164 & 1.41   & 0.02  & 1.80E-07 \\ \hline
	\end{tabular}
}

\end{table}

\begin{figure}
\centering
\includegraphics[width=1.0\linewidth]{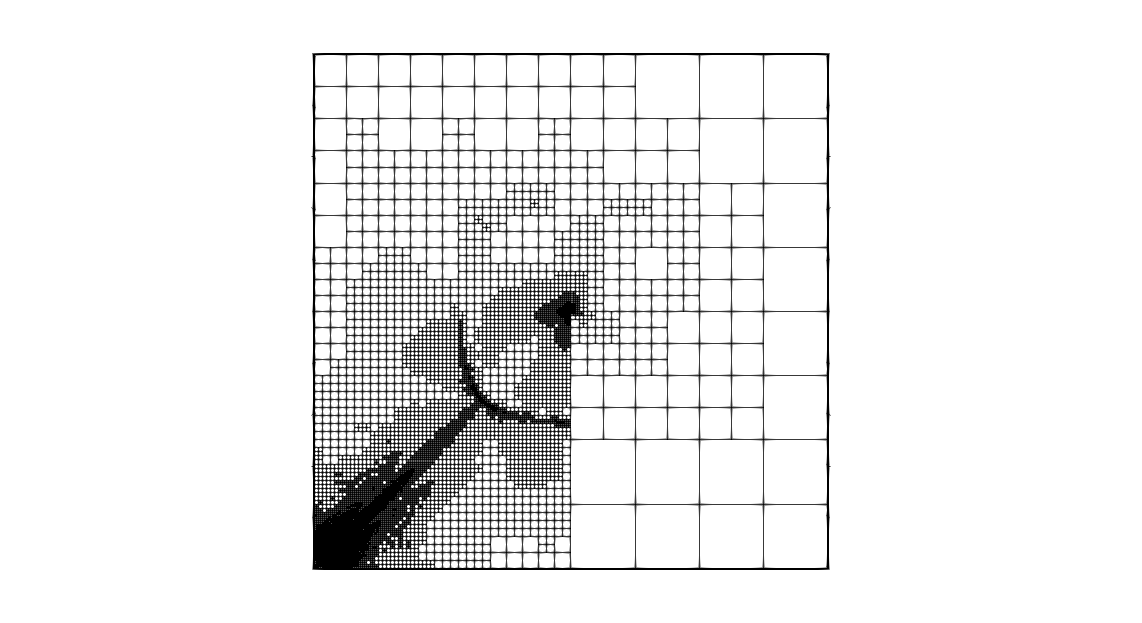}
\caption{$p$-Laplace for $p=4$, $\varepsilon=10^{-10}$: Point evaluation: Mesh on level $\ell =25$.}
\label{Figure: P-LaplacePointMesh24}
\end{figure}

\subsection{Navier-Stokes benchmark problem}
We now consider the stationary NS-benchmark problem NS2D-1\footnote{\url{http://www.featflow.de/en/benchmarks/cfdbenchmarking/flow/dfg_benchmark1_re20.html}}; see \cite{TurSchabenchmark1996}. 
{The computational domain $\Omega$ is given by}  
$(0,H) \times (0,2.2)\setminus \mathcal{B}$,
where $H =0.41$, and $\mathcal{B}:=B_\frac{1}{20}(0.2,0.2)$ 
is nothing but a
circle with center at $(0.2,0.2)$ and radius $\frac{1}{20}$. 
{The problem reads as follows:}
		Find $\textbf{u}:=(u,p) \in [H^1(\Omega)]^2 \times L^2(\Omega)$ such that
		\begin{align*}
		- \nu \Delta u + (u \cdot \nabla) u - \nabla p=& 0 \qquad  \qquad\text{in } \Omega,\nonumber\\ 
		\nabla \cdot u =&  0\qquad  \qquad\text{in } \Omega,\nonumber\\
		u=&0 \qquad  \qquad\text{on } \Gamma_{\text{no-slip}}, \\
		u=&\hat{u}\qquad  \qquad \text{on } \Gamma_{\text{inflow}}, \nonumber\\
		\nu \frac{\partial u}{\partial \vec{n}} - p \cdot \vec{n }=& 0 \qquad \qquad\text{on } \Gamma_{\text{outflow}}, \nonumber
		\end{align*}
		where $\nu = 10^{-3}$. The boundary parts  are given by 
		$\Gamma_{\text{outflow}} :=(\{x=2.2\} \cap \partial \Omega)\setminus \partial (\{x=2.2\} \cap \partial \Omega)$,
		$\Gamma_{\text{inflow}} :=\{x=0\} \cap \partial \Omega$,
		and
		$\Gamma_{\text{no-slip}} := \overline{\partial \Omega \setminus (\Gamma_{\text{inflow}} \cup \Gamma_{\text{outflow}})}$.

		The inflow is described by  $\hat{u}(x,y):=(3w(y)/10,0)$ with $w(y)=4y(H-y)/H^2$.
		The pressure is uniquely determined 
		due to the do-nothing condition
		prescribed on $\Gamma_{\text{outflow}}$; see \cite{HeRaTu96}.
		Our quantity of interest is given by the lift which is defined as 	\[
	J(\textbf{u}):= 500\int_{\partial \mathcal{B}} \left[\nu \frac{\partial u}{\partial \vec{n}} - p \vec{n }\right]\cdot \vec{e}_2\,\text{ d}s_{(x,y)},
\]
where $\vec{e}_2 = (0,1)$. 
The reference value $J(\textbf{u})=0.010618948146$ was taken from \cite{nabh1998high}.

In the numerical simulations, we observed that 
the `if' conditions (Step 6-7 and Step 9-10) 
in Algorithm \ref{Outer DWR Algorithm}
were entered, possibly multiple times, resulting in a significant 
improvement of the effectivity indices. In 
Table \ref{Table: NSAlgorithmusIeff}, these evaluations 
have the following correspondences to the previous algorithm:

\begin{enumerate}
\item Step 1 (Table \ref{Table: NSAlgorithmusIeff}) 
$\quad\widehat{=}\quad$ Step 4 (Alg. \ref{Outer DWR Algorithm}). For the computation of the estimators, we use $u_h^{\ell,(2)}=I_{u}^{(2)}u_h^\ell\in U_h^{\ell,(2)}$  and $z_h^{\ell,(2)}=I_{z}^{(2)}z_h^\ell\in V_h^{\ell,(2)}$.
\item Step 2 (Table \ref{Table: NSAlgorithmusIeff}) 
$\quad\widehat{=}\quad$ Step 6-7 (Alg. \ref{Outer DWR Algorithm}). For the computation of the estimators, we use $u_h^{\ell,(2)}=I_{u}^{(2)}u_h^\ell\in U_h^{\ell,(2)}$  and $z_h^{\ell,(2)}$ as the solution of the linear  problem: 
Find $z_h^{\ell,(2)} \in V_h^{\ell,(2)}$  such that $\mathcal{A}'(u_h^{\ell,(2)})(z_h^{\ell,(2)},v_h^{\ell,(2)})= J'(u_h^{\ell,(2)})(v_h^{\ell,(2)})$.
\item Step 3 (Table \ref{Table: NSAlgorithmusIeff}) 
$\quad\widehat{=}\quad$ Step 9-10 (Alg. \ref{Outer DWR Algorithm}). For the computation of the estimators, we use $u_h^{\ell,(2)}$ as the solution of {the non-linear problem}\: Find  $u_h^{\ell,(2)} \in  U_h^{\ell,(2)}$  such that $\mathcal{A}(u_h^{\ell,(2)})(v_h^{\ell,(2)})=~0$ and $z_h^{\ell,(2)}$ as in the previous executed step.
\item Step 4 (Table \ref{Table: NSAlgorithmusIeff}) 
$\quad\widehat{=}\quad$ Step 6-7 (Alg. \ref{Outer DWR Algorithm}). For the computation of the estimators, we use $u_h^{\ell,(2)}$ as the solution of {the non-linear problem}:  Find  $u_h^{\ell,(2)} \in  U_h^{\ell,(2)}$  such that $\mathcal{A}(u_h^{\ell,(2)})(v_h^{\ell,(2)})=~0$, and $z_h^{\ell,(2)}$ as the solution of {the linear problem}: Find $z_h^{\ell,(2)} \in V_h^{\ell,(2)}$  such that $\mathcal{A}'(u_h^{\ell,(2)})(z_h^{\ell,(2)},v_h^{\ell,(2)})= J'(u_h^{\ell,(2)})(v_h^{\ell,(2)})$.
\end{enumerate}

\begin{table}
		\caption{Navier-Stokes benchmark problem: Evolution of the effectivity indices during the exectuion of Algorithm \ref{Outer DWR Algorithm}. In Step 1, we use higher-order interpolation to approximate $z_h^{(2)}$ and $u_h^{(2)}$. 
			If Step 2 is executed, then we update $z_h^{(2)}$ by solving the adjoint problem where $u_h^{(2)}$ is the interpolation. 
			{After executing Step 3,}
			we replace the interpolation of $u_h^{(2)}$ by the solution of primal problem on the enriched space.  In Step 4,  $z_h^{(2)}$ and $u_h^{(2)}$ are the solutions of the primal and adjoint problem  on the enriched spaces, respectively. 
			{The sign {\color{red} X} means that}
			this step is not executed. 
			{ Lines \ref{Algorithm: Ifz} and \ref{Algorithm: Ifu} of Algorithm~\ref{Outer DWR Algorithm} 
				decide whether a step is execuded or not.}
			\label{Table: NSAlgorithmusIeff}}
	\scalebox{0.95}{
	\begin{tabular}{|c||c|c|c||c|c|c||c|c|c||c|c|c|}
		\hline
		 & \multicolumn{3}{|c|}{Step 1 (interpolation)} & \multicolumn{3}{|c|}{Step 2 (compute $z_h^{(2)}$)}& \multicolumn{3}{|c|}{Step 3  (compute $u_h^{(2)}$)}& \multicolumn{3}{|c|}{Step 4  (compute $z_h^{(2)}$)}\\ \hline
		$\ell$ &  $I_{eff,h}$   & $I_{eff,\mathcal{R}}$     & $I_{eff}$     &  $I_{eff,h}$   & $I_{eff,\mathcal{R}}$     & $I_{eff} $ &  $I_{eff,h}$   & $I_{eff,\mathcal{R}}$     & $I_{eff} $&  $I_{eff,h}$   & $I_{eff,\mathcal{R}}$     & $I_{eff} $   \\ \hline
		1           & 1.71   & 1.69  & 1.92  &{\color{red} X} & {\color{red} X}   & {\color{red} X}   & 2.23  & 2.2   & 1.55 & {\color{red} X}   & {\color{red} X}   & {\color{red} X}   \\ \hline
		2           &1.28   & 1.28   & 0.24 & 1.13                     & 1.13   & 0.24& 0.79  & 0.79  &0.78  & {\color{red} X}   & {\color{red} X}   & {\color{red} X}   \\ \hline
		3           &1.52   & 1.52 & 0.15&1.13                       & 1.13 & 1.15 &0.86  & 0.86    &0.85 & {\color{red} X}   & {\color{red} X}   & {\color{red} X}   \\ \hline
		4           &0.66   & 0.66 & 0.88 & 0.80                        & 0.80   & 0.88 & {\color{red} X}  & {\color{red} X}  & {\color{red} X} & {\color{red} X}   & {\color{red} X}   & {\color{red} X}   \\ \hline
		5           & 0.45   & 0.45  & 0.11 &{\color{red} X}               &{\color{red} X}  &{\color{red} X} & 0.72  & 0.72  & 1.00 & {\color{red} X}   & {\color{red} X}   & {\color{red} X}   \\ \hline
		6           &0.41   &0.41  & 0.31  & 0.88                     & 0.88 & 0.31 & 1.01     & 1.01 & 0.99 & {\color{red} X}   & {\color{red} X}   & {\color{red} X}   \\ \hline
		7           & 0.89  & 0.89 & 0.67 & {\color{red} X}                        & {\color{red} X}   & {\color{red} X}  & {\color{red} X}    & {\color{red} X}    & {\color{red} X}   & {\color{red} X}   & {\color{red} X}   & {\color{red} X}   \\ \hline
		8           & 0.41  & 0.45& 0.85 & 2.17                     & 2.21 & 0.85 & 1.54  & 1.58  & 1.60 & {\color{red} X}   & {\color{red} X}   & {\color{red} X}   \\ \hline
		9           & 2.81   & 2.81  & 2.86  & 0.71                       &0.71 & 2.86& 0.75  & 0.75  & 0.74 & {\color{red} X}   & {\color{red} X}   & {\color{red} X}   \\ \hline
		10          & 3.42   & 3.42     & 0.86  & 1.34                      & 1.34 & 0.86& 1.10 & 1.10  & 1.06 & {\color{red} X}   & {\color{red} X}   & {\color{red} X}   \\ \hline
		11          & 4.20   & 4.20  & 0.18 & 0.70                    &0.70 & 0.18 & 1.08  & 1.08  & 1.08 & {\color{red} X}   & {\color{red} X}   & {\color{red} X}   \\ \hline
		12          & 5.72   & 5.72 & 1.92  & 0.68                         &0.68& 1.92& 1.33  & 1.33  & 1.35 & {\color{red} X}  & {\color{red} X}  & {\color{red} X}  \\ \hline
		13          & 3.49   & 3.49 & 0.40  & 0.62                       & 0.62 & 0.40 & {\color{red} X}    & {\color{red} X}    & {\color{red} X}   & {\color{red} X}   & {\color{red} X}   & {\color{red} X}   \\ \hline
		14          & 5.11    &5.11   & 1.63  & {\color{red} X}                        &{\color{red} X}   & {\color{red} X}  &5.97  & 5.97    & 0.38  & 0.38  &0.38   & 0.38   \\ \hline
		15          &2.16   & 2.16 & 1.61  & {\color{red} X}                      & {\color{red} X}  & {\color{red} X}  & 1.99 & 1.99 & 0.94 & 0.94  & 0.94& 0.94   \\ \hline
		16          & 1.50  & 1.50  & 1.70  & 1.29                       & 1.29 & 1.70 & {\color{red} X}  & {\color{red} X}   & {\color{red} X}  & {\color{red} X}   & {\color{red} X}   & {\color{red} X}   \\ \hline
		17          & 6.25  & 6.24  & 1.95   & 1.30                     & 1.29 & 1.95  & 1.59  & 1.58  & 1.58 & {\color{red} X}   & {\color{red} X}   & {\color{red} X}   \\ \hline
		18          & 109.0   & 109.2  & 10.3 & 6.71                       & 6.88& 10.3 & 2.17   &2.34 &2.27   & {\color{red} X}   & {\color{red} X}   & {\color{red} X}   \\ \hline
		19          & 47.7& 47.7 &4.75 & 5.35                       & 5.35&4.75   & {\color{red} X}  & {\color{red} X}  & {\color{red} X} & {\color{red} X} & {\color{red} X}  & {\color{red} X}  \\ \hline
		20          & 231.0   &231.0  & 2.87  & {\color{red} X}                     &{\color{red} X} & {\color{red} X} & 226.4  & 226.4   &2.93  & 2.95 &2.93   &2.93 \\ \hline
		21          & 20.7  & 20.7 & 14.1     &{\color{red} X}                      & {\color{red} X} & {\color{red} X}     & 2.10  & 2.09 & 2.10& {\color{red} X}   & {\color{red} X}   & {\color{red} X}   \\ \hline
	\end{tabular}
}

\end{table}

Table~\ref{Table: NSAlgorithmusIeff} shows that we almost always improve the effectivity step by step. We notice that $I_{eff}$ does not depend on the choice of $z_h^{(2)}$ 
{since it coincides for Step 1 and Step 2 as well as for Step 3 and Step 4 provided that these steps are executed.}
Furthermore, we observe that, in certain meshes, 
{the use of}
interpolation leads to very bad  effectivity indices. 
However, Algorithm~\ref{Outer DWR Algorithm} improves this during the {adaptive} process, 
{although}
the saturation assumption is not fulfilled. The saturation assumption is also not fulfilled if $u_h^{(2)}$ is the exact discrete solution of the enriched space as shown in \cite{EnLaWi20}.

In Figure \ref{Figure: NSIeffhIeff}, we monitor a similar quality of the effectivity indices for \textit{new} and \textit{full}. 
Here we
observe  that the interpolation error estimator delivers a worse result. The resulting meshes for $\ell =20$ are shown in Figure \ref{fig:Full_New_Int}. Here also the meshes of \textit{new} and \textit{full} are more similar. 
This is not surprising since we perform more enriched solves.
For the error, which is discussed in Figure \ref{Figure: NSError},  
a particular conclusion could not be determined.

\begin{figure}
	\centering
	\includegraphics[width=0.95\linewidth]{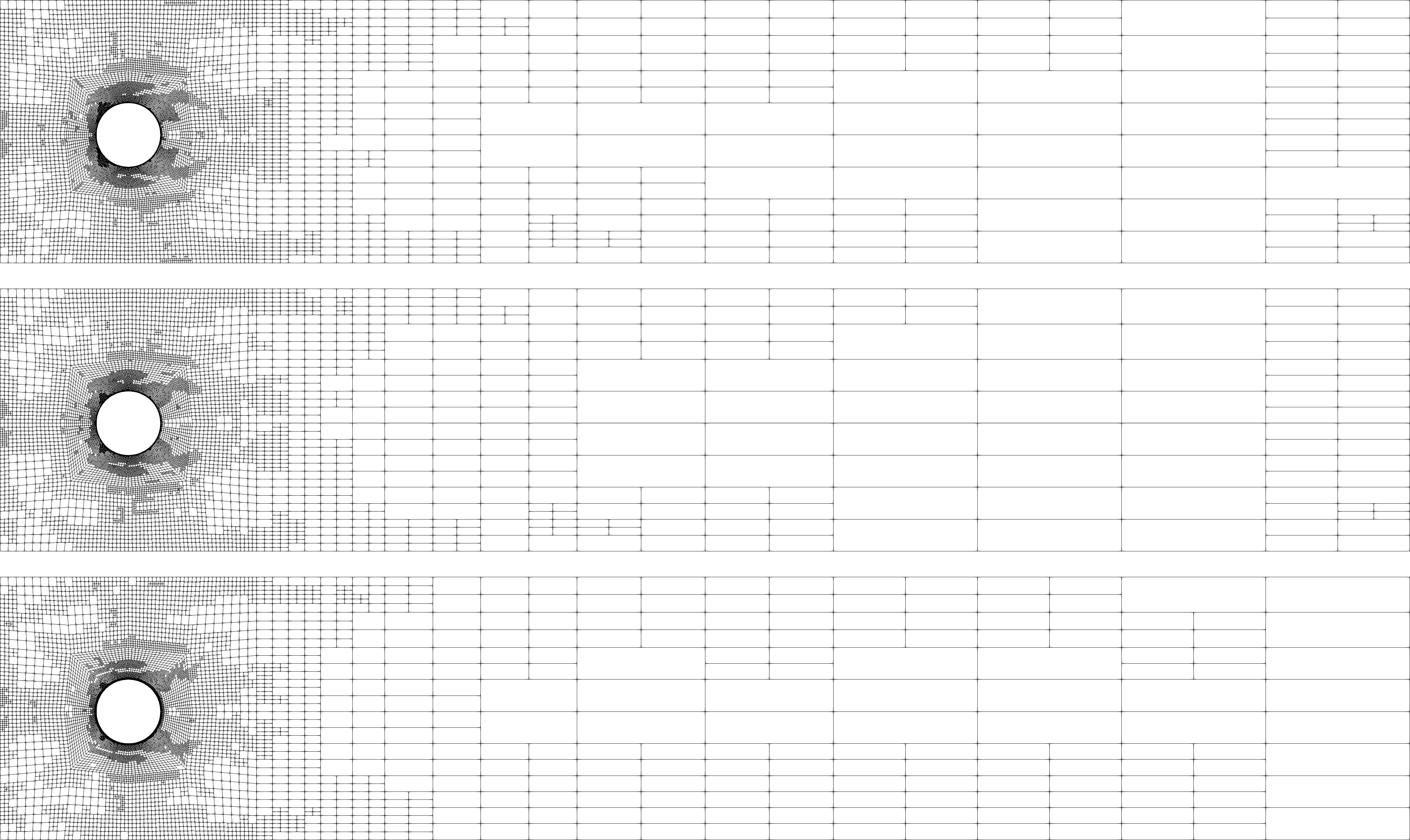}
	\caption{Navier-Stokes benchmark problem: The corresponding meshes for $\ell=20$ and  \textit{new}(top), \textit{full}(middle), \textit{int}(bottom).
}

	\label{fig:Full_New_Int}
\end{figure}
\begin{figure}[H]
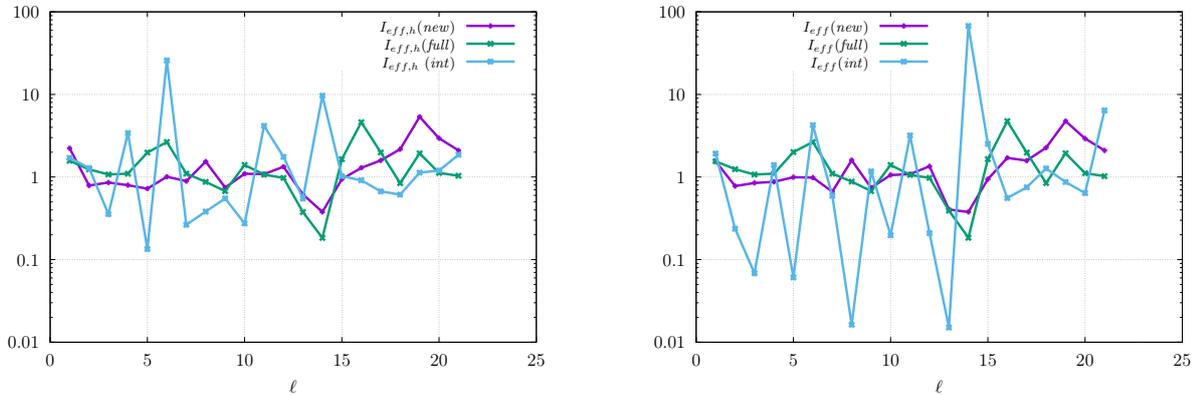

	
	\ifMAKEPICS
	\begin{gnuplot}[terminal=epslatex]
		set output "Figures/Example1c_archive.tex"
		set key opaque
		set logscale y
		set yrange [0.01:100]
		set datafile separator "|"
		set grid ytics lc rgb "#bbbbbb" lw 1 lt 0
		set grid xtics lc rgb "#bbbbbb" lw 1 lt 0
		set xlabel '\text{$\ell$}'
		plot  '< sqlite3 Data/NS/X01/New/data.db "SELECT DISTINCT Refinementstep+1, Ieff from data WHERE Refinementstep   <= 20 "' u 1:2 w  lp lw 3 title ' \footnotesize $I_{eff,h}$(\textit{new})',\
		'< sqlite3 Data/NS/X01/Full/data.db "SELECT DISTINCT Refinementstep+1, Ieff from data WHERE Refinementstep  <= 20 "' u 1:2 w  lp lw 3 title ' \footnotesize  $I_{eff,h}$(\textit{full})',\
				'< sqlite3 Data/NS/X01/Interpolate/data.db "SELECT DISTINCT Refinementstep+1, Ieff from data WHERE Refinementstep  <= 20 "' u 1:2 w  lp lw 3 title ' \footnotesize $I_{eff,h}$ (\textit{int})',\
		#plot  '< sqlite3 Data/P_Laplace_slit/New/data.db "SELECT DISTINCT DOFS_primal, Exact_Error from data "' u 1:2 w  lp lw 3 title ' \small $|J(u)-J(u_h)|$ (a)',\
		'< sqlite3 Data/NS/X01/New/data.db "SELECT DISTINCT Refinementstep, abs(Iefftilde) from data WHERE Refinementstep <= 20 "' u 1:2 w  lp lw 3 title ' \footnotesize Ieff (\textit{new})',\
		'< sqlite3 Data/NS/X01/Full/data.db "SELECT DISTINCT Refinementstep,abs(Iefftilde) from data WHERE Refinementstep <= 20"' u 1:2 w  lp lw 3 title ' \footnotesize Ieff (\textit{new})',\
		'< sqlite3 Data/P_Laplace_slit/Full/data.db "SELECT DISTINCT DOFS_primal, Exact_Error from data_global "' u 1:2 w  lp lw 2 title ' \small $|J(u)-J(u_h)|$ (u)',\
		'< sqlite3 Data/P_Laplace_slit/New/data.db "SELECT DISTINCT DOFS_primal, Estimated_Error_remainder from data "' u 1:2 w  lp lw 2 title '$\small|\eta^{(2)}_\mathcal{R}|$',\
		'< sqlite3 Data/P_Laplace_slit/New/data.db "SELECT DISTINCT DOFS_primal, abs(ErrorTotalEstimation) from data "' u 1:2 w  lp lw 2 title '$\small\eta^{(2)}$',\
		'< sqlite3 Data/P_Laplace_slit/New/data.db "SELECT DISTINCT DOFS_primal, 0.5*abs(Estimated_Error_adjoint+Estimated_Error_primal) from data "' u 1:2 w  lp lw 2 title '$\small\eta^{(2)}_h$',\
		1/x  dt 3 lw  4
		#0.1/sqrt(x)  lw 4,\
		0.1/(x*sqrt(x))  lw 4,
		# '< sqlite3 dataSingle.db "SELECT DISTINCT DOFS_primal, Exact_Error from data WHERE DOFS_primal <= 90000"' u 1:2 w lp title 'Exact Error',\
		'< sqlite3 dataSingle.db "SELECT DISTINCT  DOFS_primal, Estimated_Error from data"' u 1:2 w lp title 'Estimated Error',\
		'< sqlite3 dataSingle.db "SELECT DISTINCT  DOFS_primal, Estimated_Error_primal from data"' u 1:2 w lp title 'Estimated Error(primal)',\
		'< sqlite3 dataSingle.db "SELECT DISTINCT  DOFS_primal, Estimated_Error_adjoint from data"' u 1:2 w lp title 'Estimated(adjoint)',\
		'< sqlite3 Data/P_Laplace_slit/New/data.db "SELECT DISTINCT DOFS_primal, abs(ErrorTotalEstimation)+abs(abs(\"Juh2-Juh\") + Exact_Error) from data "' u 1:2 w  lp lw 2 title '$\small\eta^{(2)}$',\
		'< sqlite3 Data/P_Laplace_slit/New/data.db "SELECT DISTINCT DOFS_primal, abs(ErrorTotalEstimation)-abs(\"Juh2-Juh\"+ abs(Exact_Error)) from data "' u 1:2 w  lp lw 2 title '$\small\eta^{(2)}$',\
	\end{gnuplot}
		\begin{gnuplot}[terminal=epslatex]
			set output "Figures/Example1c2_archive.tex"
			set key left
			set key opaque
			set logscale y
			set yrange [0.01:100]
			set datafile separator "|"
			set grid ytics lc rgb "#bbbbbb" lw 1 lt 0
			set grid xtics lc rgb "#bbbbbb" lw 1 lt 0
			set xlabel '\text{$\ell$}'
			plot  		'< sqlite3 Data/NS/X01/New/data.db "SELECT DISTINCT Refinementstep+1, abs(Iefftilde) from data WHERE Refinementstep <= 20 "' u 1:2 w  lp lw 3 title ' \footnotesize $I_{eff}$(\textit{new})',\
			'< sqlite3 Data/NS/X01/Full/data.db "SELECT DISTINCT Refinementstep+1,abs(Iefftilde) from data WHERE Refinementstep <= 20"' u 1:2 w  lp lw 3 title ' \footnotesize  $I_{eff}$(\textit{full})',\
			'< sqlite3 Data/NS/X01/Interpolate/data.db "SELECT DISTINCT Refinementstep+1,abs(Iefftilde) from data WHERE Refinementstep <= 20"' u 1:2 w  lp lw 3 title ' \footnotesize  $I_{eff}$(\textit{int})',\
			#plot  '< sqlite3 Data/P_Laplace_slit/New/data.db "SELECT DISTINCT DOFS_primal, Exact_Error from data "' u 1:2 w  lp lw 3 title ' \small $|J(u)-J(u_h)|$ (a)',\
			'< sqlite3 Data/NS/X01/New/data.db "SELECT DISTINCT Refinementstep, abs(Iefftilde) from data WHERE Refinementstep <= 20 "' u 1:2 w  lp lw 3 title ' \footnotesize Ieff (\textit{new})',\
			'< sqlite3 Data/NS/X01/Full/data.db "SELECT DISTINCT Refinementstep,abs(Iefftilde) from data WHERE Refinementstep <= 20"' u 1:2 w  lp lw 3 title ' \footnotesize Ieff (\textit{new})',\
			'< sqlite3 Data/P_Laplace_slit/Full/data.db "SELECT DISTINCT DOFS_primal, Exact_Error from data_global "' u 1:2 w  lp lw 2 title ' \small $|J(u)-J(u_h)|$ (u)',\
			'< sqlite3 Data/P_Laplace_slit/New/data.db "SELECT DISTINCT DOFS_primal, Estimated_Error_remainder from data "' u 1:2 w  lp lw 2 title '$\small|\eta^{(2)}_\mathcal{R}|$',\
			'< sqlite3 Data/P_Laplace_slit/New/data.db "SELECT DISTINCT DOFS_primal, abs(ErrorTotalEstimation) from data "' u 1:2 w  lp lw 2 title '$\small\eta^{(2)}$',\
			'< sqlite3 Data/P_Laplace_slit/New/data.db "SELECT DISTINCT DOFS_primal, 0.5*abs(Estimated_Error_adjoint+Estimated_Error_primal) from data "' u 1:2 w  lp lw 2 title '$\small\eta^{(2)}_h$',\
			1/x  dt 3 lw  4
			#0.1/sqrt(x)  lw 4,\
			0.1/(x*sqrt(x))  lw 4,
			# '< sqlite3 dataSingle.db "SELECT DISTINCT DOFS_primal, Exact_Error from data WHERE DOFS_primal <= 90000"' u 1:2 w lp title 'Exact Error',\
			'< sqlite3 dataSingle.db "SELECT DISTINCT  DOFS_primal, Estimated_Error from data"' u 1:2 w lp title 'Estimated Error',\
			'< sqlite3 dataSingle.db "SELECT DISTINCT  DOFS_primal, Estimated_Error_primal from data"' u 1:2 w lp title 'Estimated Error(primal)',\
			'< sqlite3 dataSingle.db "SELECT DISTINCT  DOFS_primal, Estimated_Error_adjoint from data"' u 1:2 w lp title 'Estimated(adjoint)',\
			'< sqlite3 Data/P_Laplace_slit/New/data.db "SELECT DISTINCT DOFS_primal, abs(ErrorTotalEstimation)+abs(abs(\"Juh2-Juh\") + Exact_Error) from data "' u 1:2 w  lp lw 2 title '$\small\eta^{(2)}$',\
			'< sqlite3 Data/P_Laplace_slit/New/data.db "SELECT DISTINCT DOFS_primal, abs(ErrorTotalEstimation)-abs(\"Juh2-Juh\"+ abs(Exact_Error)) from data "' u 1:2 w  lp lw 2 title '$\small\eta^{(2)}$',\
		\end{gnuplot}
	\fi
	\scalebox{0.61}{\input{Figures/Example1c_archive.tex}}\hfil	\scalebox{0.61}{\input{Figures/Example1c2_archive.tex}}
	\captionof{figure}{ Navier-Stokes benchmark problem: Effektivity indices.}\label{Figure: NSIeffhIeff}
\end{figure}

\begin{figure}[H]
	\centering			
	\ifMAKEPICS
	\begin{gnuplot}[terminal=epslatex]
		set output "Figures/Example1r_archive.tex"
		set key bottom left
		set key opaque
		set logscale 
		set datafile separator "|"
		set grid ytics lc rgb "#bbbbbb" lw 1 lt 0
		set grid xtics lc rgb "#bbbbbb" lw 1 lt 0
		set xlabel '\text{DOFs}'
		plot  '< sqlite3 Data/NS/X01/New/data.db "SELECT DISTINCT DOFS_primal , Exact_Error from data WHERE Refinementstep <= 20 "' u 1:2 w  lp lw 3 title ' \footnotesize Error (\textit{new})',\
		'< sqlite3 Data/NS/X01/Full/data.db "SELECT DISTINCT DOFS_primal, Exact_Error from data WHERE Refinementstep <= 20"' u 1:2 w  lp lw 3 title ' \footnotesize Error (\textit{full})',\
		'< sqlite3 Data/NS/X01/Interpolate/data.db "SELECT DISTINCT DOFS_primal, Exact_Error from data WHERE Refinementstep <= 20 "' u 1:2 w  lp lw 3 title ' \footnotesize Error (\textit{int})',\
		#plot  '< sqlite3 Data/P_Laplace_slit/New/data.db "SELECT DISTINCT DOFS_primal, Exact_Error from data "' u 1:2 w  lp lw 3 title ' \small $|J(u)-J(u_h)|$ (a)',\
		'< sqlite3 Data/P_Laplace_slit/Full/data.db "SELECT DISTINCT DOFS_primal, Exact_Error from data_global "' u 1:2 w  lp lw 2 title ' \small $|J(u)-J(u_h)|$ (u)',\
		'< sqlite3 Data/P_Laplace_slit/New/data.db "SELECT DISTINCT DOFS_primal, Estimated_Error_remainder from data "' u 1:2 w  lp lw 2 title '$\small|\eta^{(2)}_\mathcal{R}|$',\
		'< sqlite3 Data/P_Laplace_slit/New/data.db "SELECT DISTINCT DOFS_primal, abs(ErrorTotalEstimation) from data "' u 1:2 w  lp lw 2 title '$\small\eta^{(2)}$',\
		'< sqlite3 Data/P_Laplace_slit/New/data.db "SELECT DISTINCT DOFS_primal, 0.5*abs(Estimated_Error_adjoint+Estimated_Error_primal) from data "' u 1:2 w  lp lw 2 title '$\small\eta^{(2)}_h$',\
		1/x  dt 3 lw  4
		#0.1/sqrt(x)  lw 4,\
		0.1/(x*sqrt(x))  lw 4,
		# '< sqlite3 dataSingle.db "SELECT DISTINCT DOFS_primal, Exact_Error from data WHERE DOFS_primal <= 90000"' u 1:2 w lp title 'Exact Error',\
		'< sqlite3 dataSingle.db "SELECT DISTINCT  DOFS_primal, Estimated_Error from data"' u 1:2 w lp title 'Estimated Error',\
		'< sqlite3 dataSingle.db "SELECT DISTINCT  DOFS_primal, Estimated_Error_primal from data"' u 1:2 w lp title 'Estimated Error(primal)',\
		'< sqlite3 dataSingle.db "SELECT DISTINCT  DOFS_primal, Estimated_Error_adjoint from data"' u 1:2 w lp title 'Estimated(adjoint)',\
		'< sqlite3 Data/P_Laplace_slit/New/data.db "SELECT DISTINCT DOFS_primal, abs(ErrorTotalEstimation)+abs(abs(\"Juh2-Juh\") + Exact_Error) from data "' u 1:2 w  lp lw 2 title '$\small\eta^{(2)}$',\
		'< sqlite3 Data/P_Laplace_slit/New/data.db "SELECT DISTINCT DOFS_primal, abs(ErrorTotalEstimation)-abs(\"Juh2-Juh\"+ abs(Exact_Error)) from data "' u 1:2 w  lp lw 2 title '$\small\eta^{(2)}$',\
	\end{gnuplot}
	\fi
	\scalebox{1.0}{\input{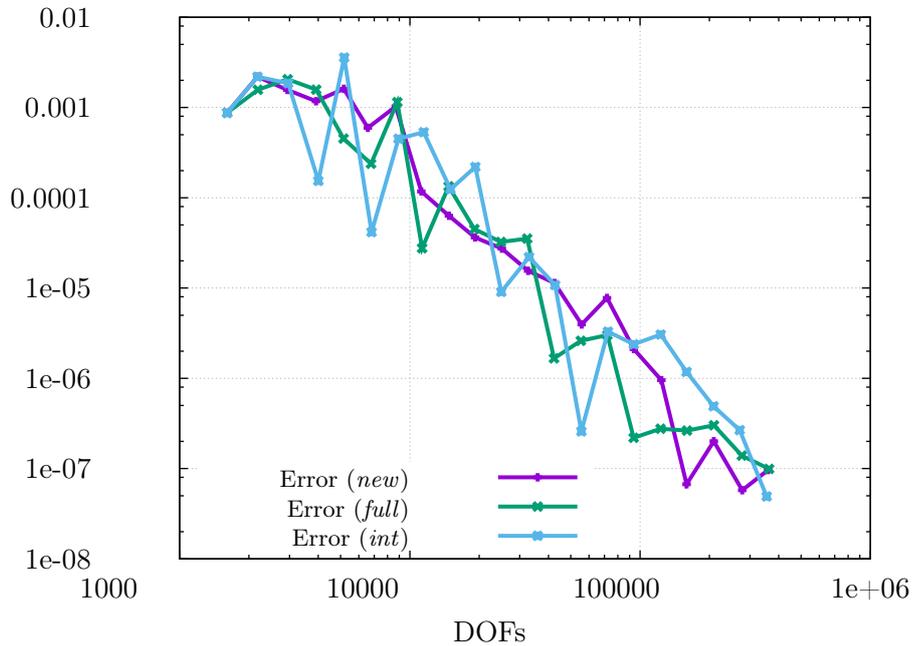}}
	\captionof{figure}{ Navier-Stokes benchmark problem: Errors versus DOFs}\label{Figure: NSError}
\end{figure}
\section{Conclusions}
\label{Section: Conclusions}
{We derived adaptive algorithms}
for
computationally attractive low-order 
finite elements and interpolations to realize 
goal-oriented a posteriori error estimation using the DWR approach.
Using saturation assumptions, we 
{rigorously proved}
two-side 
error estimates showing the efficiency and robustness. 
These findings 
were 
{supported by means of}
three numerical tests.
Therein, the newly suggested error estimator was compared 
to the full estimator and a version in which only interpolations 
are used. For linear problems (Example 1), all three variants coincide with respect the error behaviour. 
For nonlinear problems (Example 2), differences can be observed.
In the last numerical test (Example 3), a fluid-flow example 
was considered. Here, the PDE is semi-linear, but due to the 
convection term, the saturation assumption is not always fulfilled.
This could be observed in terms of bad effectivity indices every now 
and then. Moreover, in the last example, the mechanism of our proposed 
adaptive algorithm is highlighted because the switch from 
interpolations to enriched spaces in some iterations significantly 
improves the effectivity indices.
In future work, we plan to apply this algorithm to 
{
other applications, in particular, to multiphysics problems.}
		
   \section{Acknowledgments}
   This work has been supported by the Austrian Science Fund (FWF) under the grant
   P 29181
   `Goal-Oriented Error Control for Phase-Field Fracture Coupled to Multiphysics Problems'. 
Furthermore, the first two authors would like to thank  IfAM from the Leibniz Universt\"at Hannover (LUH) 
for the organization of their visit in Hannover in January 2020.  
The third author
would like to thank RICAM for his supported visit in Linz in November 2019, and
for funding from the
Deutsche Forschungsgemeinschaft (DFG, German Research Foundation)
under Germany's Excellence Strategy within the Cluster of
Excellence PhoenixD (EXC 2122).

		\bibliography{./lit}
		\bibliographystyle{abbrv}

	\end{document}